\newtheorem{thm}{Theorem}[section]
\newtheorem{lem}[thm]{Lemma}
\newtheorem{prop}[thm]{Proposition}
\newtheorem{cor}[thm]{Corollary}
\newtheorem{mydef}{Definition}[section]
\newtheorem{rem}{Remark}[section]
\newcommand{\bFormula}[1]{
\begin{equation} \label{#1}}
\newcommand{\eF}{\end{equation}}
\newcommand{\vt}{\vartheta}
\newcommand{\vr}{\rho}
\newcommand{\vq}{{\bf q}}
\newcommand{\vu}{{\bf u}}
\newcommand{\bn}{{\bf n}}
\newcommand{\bu}{\mathbf{u}}
\newcommand{\diver}{{\rm div}}
\newcommand{\R}{{\mathbb{R}}}
\newcommand{\Z}{{\mathbb{Z}}}
\theoremstyle{remark}
\newcommand\bb[1]{\mathbf{#1}}
\newcommand\ddfrac[2]{\frac{\displaystyle #1}{\displaystyle #2}}
\newcommand\bint[1]{\displaystyle\int #1}
\numberwithin{equation}{section}
\newcommand{\Addresses}{{
  \bigskip
  \footnotesize

V\'{a}clav M\'{a}cha, \textsc{Institute of Mathematics of the Academy of Sciences of the Czech Republic}\par\nopagebreak
  \textit{E-mail address}: \texttt{macha@math.cas.cz}

  \medskip

  Boris Muha, \textsc{Department of Mathematics, Faculty of Science, University of Zagreb}\par\nopagebreak
  \textit{E-mail address} \texttt{borism@math.hr}

  \medskip

\v{S}\'{a}rka Ne\v{c}asov\'{a}, \textsc{Institute of Mathematics of the Academy of Sciences of the Czech Republic}\par\nopagebreak
  \textit{E-mail address}: \texttt{matus@math.cas.cz}

\medskip
Arnab Roy, \textsc{BCAM, Basque Center for Applied Mathematics, Mazarredo 14, E48009 Bilbao, Bizkaia, Spain}\par\nopagebreak
  \textit{E-mail address}: \texttt{royarnab244@gmail.com}
  
\medskip
Sr\dj{}an Trifunovi\'{c}, \textsc{Department of Mathematics and Informatics, Faculty of Sciences, University of Novi Sad}\par\nopagebreak
  \textit{E-mail address}: \texttt{srdjan.trifunovic@dmi.uns.ac.rs}

}}
\author{V\'{a}clav M\'{a}cha, Boris Muha, \v{S}\'{a}rka Ne\v{c}asov\'{a}, Arnab Roy $\&$ Sr\dj{}an Trifunovi\'{c}}
\title{Existence of a weak solution to a nonlinear fluid-structure interaction problem with heat exchange}
\date{}                     
\begin{document}

\maketitle
\begin{abstract}
    In this paper, we study a nonlinear interaction problem between a thermoelastic shell and a heat-conducting fluid. The shell is governed by linear thermoelasticity equations and encompasses a time-dependent domain which is filled with a fluid governed by the full Navier-Stokes-Fourier system. The fluid and the shell are fully coupled, giving rise to a novel nonlinear moving boundary fluid-structure interaction problem involving heat exchange. The existence of a weak solution is obtained by combining three approximation techniques -- decoupling, penalization and domain extension. In particular, the penalization and the domain extension allow us to use the methods already developed for compressible fluids on moving domains. In such a way, the proof is more elegant and the analysis is drastically simplified. Let us stress that this is the first time the heat exchange in the context of fluid-structure interaction problems is considered.

\end{abstract}

\section{Introduction}

\subsection{Motivation and literature review}
The existence of global-in-time weak solutions to the equations related to fluid dynamics is one of the fundamental questions in the modern mathematical theory of fluid mechanics. In the case of the incompressible Navier-Stokes equations, the concept of a weak solution was introduced in seminal work of Leray \cite{Leray34}, where also existence results are proved. The corresponding theory for a barotropic compressible fluids is significantly more complicated and was developed much later, starting by pioneering works by Lions \cite{LionsBook2} and Feireisl et. al. \cite{FeireislCompressible01}. However, in many applications the assumption that the fluid is barotropic is too restrictive, e.g. \cite{MR832100,Wright07,gruber2003two}. Therefore, the mathematical theory of the full Navier-Stokes-Fourier system describing heat conducting fluid was developed quite recently, e.g. \cite{MR713680,MR826865,FeireislHeat04,FeireislBook04,feireislnovotny,MR3916800,MR3537466,MR3616663}. 

On the other hand,  the fluid interaction with elastic structures is common in many real life situations and understanding this interactions is of vital importance for applications, e.g. \cite{FSIBioMed,KalKusLasTriWeb}. We refer to such systems as fluid-structure interaction (FSI) systems. The mathematical analysis of FSI problems has been extensively studied in the last two decades and a lot of the progress has been achieved. However, most of the results concern the incompressible fluid case. The results on the existence of a weak solution typically deal with FSI problems where the elastic structure is described by a lower dimensional model of a plate/shell type, see \cite{SubBorARMA,CDEG05,MR2438783,LenRuz, BorSeb,srdjan1} and references within. Exceptions are works \cite{benevsova2020variational,multilayered} where the existence of a weak solution to FSI problems involving regularized, nonlinear, $3D$ viscoelastic structure and linear multilayered structure, respectively, were proven. All these works consider large data case and a solution existing as long as geometry does not degenerate, i.e. self-contact does not occur. There are also lot of results on the local-in-time or small data existence of strong solutions to FSI problems, see e.g. \cite{CS05,IKLT14,Raymond,MR4189724,MR3955112} and references within. We conclude the literature review about FSI models with incompressible fluid with a recent paper \cite{HilGra16}, where global-in-time solution to a $2D-1D$ FSI model with a viscoelastic beam was proven. 

The mathematical literature dealing with FSI problems with compressible fluids is scarce. In \cite{boulakia,KukTuff12} the authors prove the existence of local-in-time regular solutions. Recently, local-in-time existence results for strong solutions have been established for the compressible fluid-damped beam interaction in a 2D/1D framework in \cite{SM18} and for the compressible fluid-undamped wave interaction in a 3D/2D framework in \cite{maity2021existence}. The existence of a weak solution was proven in \cite{compressible,trwa3} and in \cite{breit2021compressible}, a weak solution was obtained for an interaction problem between a compressible fluid and a 3D viscoelastic structure. To the best of our knowledge there are only a few very recent papers dealing with the mathematical analysis of FSI problems with a heat conducting fluid. In \cite{MaityTak21} the existence of a strong solution for small time or small data is proven in the case when there is an additional damping on the structure, while in \cite{breit2021navier} existence of a weak solution is obtained for an FSI problem with a nonlinear Koiter shell. In both of these papers the structure does not conduct heat.

\subsection{Problem description}
We consider the flow of heat conducting compressible fluid in a $3D$ container with the  heat conducting elastic boundary. The fluid domain is determined by elastic displacement which is in turn obtained by solving the linearized Koiter shell equation with the forcing coming from the fluid, i.e. the fluid and the structure are fully coupled and we consider a moving boundary problem. 

\subsubsection{The problem geometry}\label{GeometryDef}
Let $\Omega\subset\R^3$ be an open, connected, bounded domain whose boundary $\partial\Omega$ is parametrized by an injective mapping $\varphi\in C^3(\Gamma;\R^3)$ such that $\partial \Omega= \varphi (\Gamma)$, where $\Gamma=\R^2/\Z^2$ is the flat torus (or $\Gamma=\R/\Z$ is a circle in 2D case). $\Omega$ represents the reference fluid configuration. We denote by $\bn$ the unit outer normal to $\Omega$. The assumption that $\Gamma$ is the flat torus is not very restrictive and is introduced for technical and presentational simplicity. It corresponds to the flow through a pipe with periodic boundary conditions which is common in applications. With slight abuse of notation we will identify functions defined on $\Gamma$ and $\partial\Omega$. We assume that the structure displacement is of the form $w(t,y)\bn (y)$, $y\in\Gamma$. The  elastic boundary at time $t$ is given by the following mapping (see Figure \ref{domain}):
\begin{eqnarray}\label{ab}
    \Phi_w(t,y)=\varphi(y)+w(t,y)\bn(y) \quad y\in \Gamma.
\end{eqnarray}
By a classical result on the tubular neighborhood, e.g. \cite[Section 10]{LeeIntroduction}, there exist numbers $a_{\partial\Omega}$, $b_{\partial\Omega}$ such that mapping $\Phi_w(t,.)$ is injective for $w(t,y)\in (a_{\partial\Omega},b_{\partial\Omega})$. Therefore the middle line of the shell at time $t$ occupies the following region:
\begin{eqnarray*}
    \Gamma^w(t)&: = \{\Phi_w(t,y): y \in \Gamma\}.
\end{eqnarray*}
\begin{figure}[h!]
\centering\includegraphics[scale=0.32]{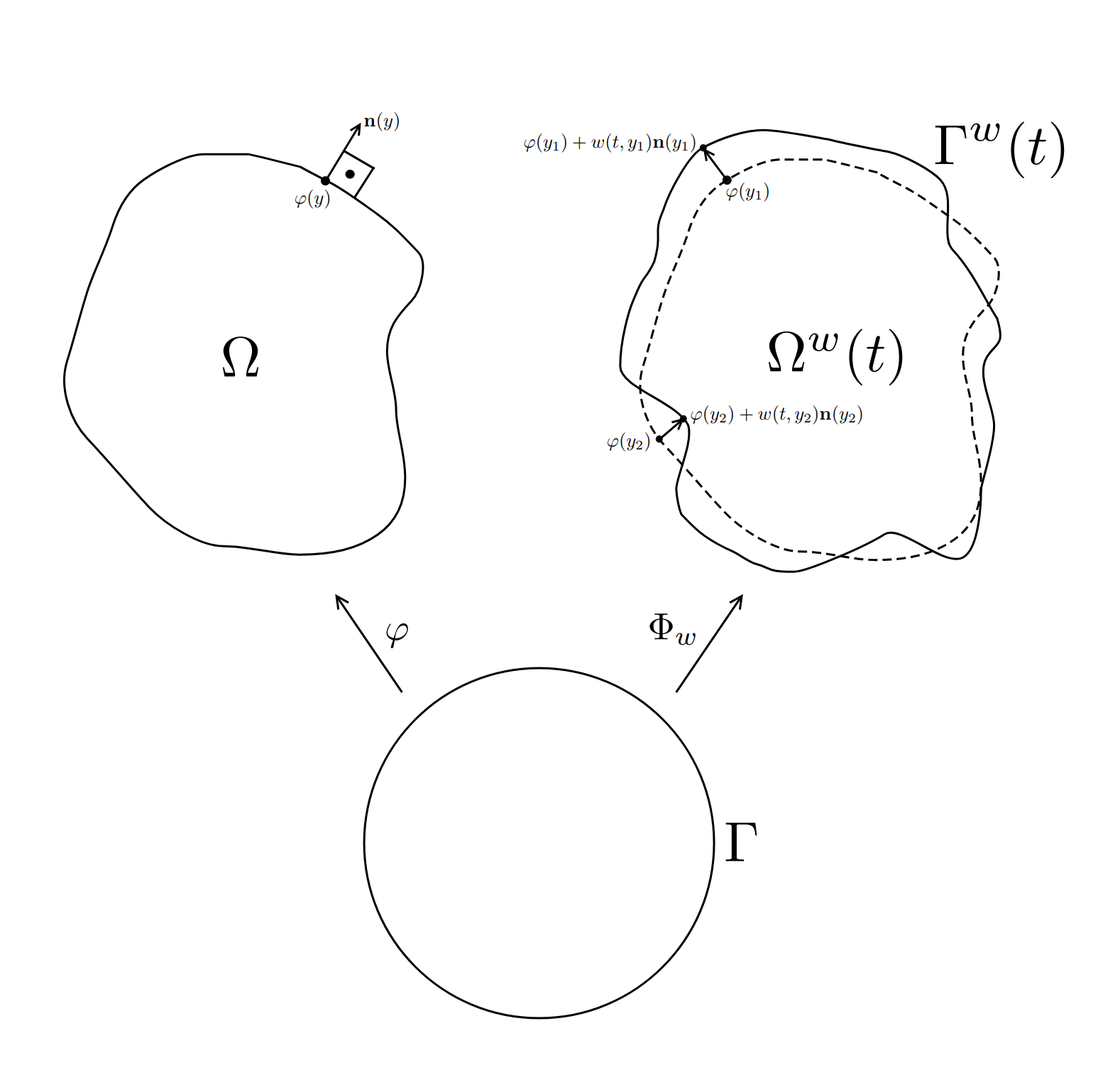}
\caption{The fluid domain $\Omega^w(t)$ determined by $\Gamma^w(t)$ - the shell at time $t$. The figure represents a vertical section of the torus in 3D or the entire model in 2D.}
\label{domain}
\end{figure}

The fluid domain at time $t$, $\Omega^w(t)\subset \mathbb{R}^3$, is defined as the interior region of $\Gamma^w(t)$. More precisely, let $\tilde{\Phi}_w$ be an arbitrary injective smooth extension of $\Phi_w$ to $\Omega$\footnote{An extension is explicitly constructed in $\eqref{domainflow}$. Note that its explicit form is not necessary for the definition of the problem.}. Then the fluid domain at time $t$ is defined by $\Omega^w(t):=\tilde{\Phi}_w(t,\Omega).$  Note that the fluid domain is well defined if $\Phi_w(t,.)$ is injective, which is true if condition  $w(t,y)\in (a_{\partial\Omega},b_{\partial\Omega})$ is satisfied. Therefore, our existence result is valid as long as this conditions holds. This is a consequence of the physical nature of the problem. Namely, for large displacement $w$, function $\Phi_w$ is not necessarily injective which introduces a domain degeneracy in the problem that corresponds to the self-contact of the shell. The question of how to analyse contact in the fluid-structure interaction is still largely open (see e.g. \cite{MR2354496,MR3281946,MR2592281,CasGraHil,HilTak09,gravina2020contactless,von2021falling} and reference within) and is outside of the scope of this paper. 
\begin{rem}
Even though the assumption that the shell deforms only in $\bn$ direction is somewhat restrictive from the physical point of view, it is nevertheless standard in the literature on weak solutions to FSI problems, e.g. \cite{CDEG05,LenRuz,SubBorARMA,BorSeb}. Namely, the existence of a weak solution to FSI problems with vector displacement is still out of reach with current state-of-the-art techniques and only available results in this direction use some kind of additional structure regularization, e.g. \cite{BorSunSlip,SunMarBor}.
\end{rem}

Finally, we introduce some notation related to the geometry. Let $\Gamma_T=(0,T)\times\Gamma$ be the shell space-time domain and 
$$
Q_T^w=\bigcup_{t\in (0,T)}\{t\}\times \Omega^w(t),\;
\Gamma^w_T=\bigcup_{t\in (0,T)}\{t\}\times \Gamma^w(t)
$$
be the Eulerian fluid domain and the Eulerian elastic interface domain, respectively. Note that the fluid domain depends on $w$ which is an unknown of the considered FSI problem. The outer unit normal to the deformed configuration $\Omega^w(t)$ is denoted by $\bn^w(t)$ and is given by formula $\bb{n}^w=\frac{\nabla\tilde{\Phi}_w^{-\tau}\bb{n}}{|\nabla\tilde{\Phi}_w^{-\tau}\bb{n}|}$. Finally, the surface element on the interface $\Gamma^w(t)$ is given by
$$
d\Gamma^w(t)=S^w d\Gamma, \quad S^w := \text{det} \nabla\tilde{\Phi}_w|\nabla\tilde{\Phi}_w^{-\tau}\bb{n}|.
$$
\subsubsection{The shell}
We use the following linear thermoelastic shell model to describe the dynamics of the elastic boundary \cite[section 1.5]{lagnese1989boundary}: 
\begin{eqnarray}
\partial_t^2 w+\Delta^2 w +\Delta \theta -\alpha_1 \Delta \partial_t w - \alpha_2 \partial_t^2 \Delta w &=&S^w{\mathbf F} \cdot \bb{n}, \label{shelleq}\\
\theta_t - \Delta \theta - \Delta w_t &=& S^w q. \label{shellheateq}
\end{eqnarray}
Here $w$ denotes the displacement of the shell central surface in the direction $\bn$ with respect to the reference configuration $\Gamma$ and $\theta\geq 0$ denotes the shell temperature. Here and in continuation of the paper we normalize all strictly positive physical positive constants since the proofs do not depend on their concrete values. $\alpha_1$ and $\alpha_2$ are coefficients of viscoelasticity and rotational inertia, respectively and we assume $\alpha_1,\alpha_2\geq 0$. Moreover, $\mathbf{F}$ is surface force density acting on the shell. Since we consider a linear model for the thermoelastic shell, equation \eqref{shellheateq} is the entropy equation, and $q$ is the entropy flux.
\subsubsection{The fluid}
We consider the flow of the heat conducting compressible fluid which is described by the three dimensional Navier-Stokes-Fourier system (see e.g. \cite{feireislnovotny}) defined on the moving domain $Q^w_T$:
\begin{eqnarray}
\partial_t \rho + \nabla \cdot (\rho \mathbf{u}) &=& 0, \label{conteq}\\[2mm]
\partial_t (\rho\mathbf{u}) + \nabla \cdot (\rho\mathbf{u}\otimes \mathbf{u})+\nabla p &=& \nabla \cdot\mathbb{S}, \label{momeq} \\[2mm]
\partial_t (\rho s) + \nabla \cdot (\rho s \mathbf{u})+ \nabla \cdot \Big( \displaystyle{\frac{\bb{q}}{\vartheta} }\Big)&=& \sigma. \label{enteq}
\end{eqnarray}
The unknown of the fluid system are the fluid densisty $\rho$, the fluid velocity $\bu$ and the fluid temperature $\vartheta\geq 0$. The equations \eqref{conteq}, \eqref{momeq} and \eqref{enteq} represent mass conservation, balance of momentum and entropy balance, respectively. The pressure $p$ and the specific entropy $s$ are the thermodynamical variables and depends on the density $\rho$ and the temperature $\vartheta$. Moreover, $\sigma$ is the entropy production rate and is given by formula:
\begin{equation}\label{sigma1}
\sigma = \frac{1}{\vt} \left( \mathbb{S} : \nabla \bb{u}  -\frac{\vq}{\vt} \cdot \nabla \vt \right).
\end{equation}
The viscous stress tensor is given by the Newton's rheological law:
\begin{eqnarray*}
\mathbb{S}(\vartheta, \nabla \bb{u}):=\mu(\vartheta) \big( \nabla \bb{u} + \nabla^\tau \bb{u}-\frac{2}{3} \nabla \cdot \bb{u} \big) + \zeta(\vartheta) \nabla \cdot \bb{u},\quad \mu(\vartheta) >0, \quad \zeta(\vartheta) > 0,
\end{eqnarray*}
and heat flux $\bb{q}$ by the Fourier's law:
\begin{eqnarray*}
\bb{q}:= - \kappa(\vartheta) \nabla\vartheta, \quad \kappa(\vartheta) > 0,
\end{eqnarray*}
where $\mu$, $\zeta$ are the viscosity coefficients and $\kappa$ is the heat coefficient. Observe that, every smooth solution to \eqref{conteq}-\eqref{enteq} with \eqref{sigma1} satisfy the following energy identity in $(0,T)$:
\begin{eqnarray}
&&\displaystyle{\frac{d}{dt} \int_{\Omega^w(t)} \Big( \frac{1}{2} \rho |\bb{u}|^2 + \rho e  \Big) +  \frac{d}{dt} \int_{\Gamma} \Big(\frac{1}{2}|\partial_t w|^2 + \frac{1}{2} |\Delta w|^2 +\frac{\alpha_2}{2} |\nabla \partial_t w|^2+ \frac{1}{2} |\theta|^2  \Big)} \nonumber \\
&&=-\displaystyle{\int_\Gamma|\nabla \theta|^2-\alpha_1 | \nabla\partial_t w|^2}. \label{enid}
\end{eqnarray}

\subsubsection{The coupling conditions}
Since we are considering a moving boundary problem we need to prescribe two sets of coupling conditions. The kinematic coupling conditions state that the velocity and the temperature are continuous on the interface $\Gamma_T$:
\begin{eqnarray}
\textbf{Continuity of the velocity:}&\partial_t w  \bb{n} &= \quad\mathbf{u} \circ \Phi_w, \label{kinc}\\
\textbf{Continuity of the temperature:}&\theta &= \quad\vartheta \circ \Phi_w. \label{heat}
\end{eqnarray}
Dynamic coupling conditions describe the balance of forces and the balance of entropy on $\Gamma_T:$
\begin{eqnarray}
\mathbf{F}&=\quad -\big[\big( p - \mathbb{S} \big) \bb{n}^w\big] \circ \Phi_w, \label{dync} \\
 q &=\quad-\Big(\frac{\bb{n}^w \bb{q}(\vartheta)}{\vartheta}\Big)\circ \Phi_w. \label{heatf}
\end{eqnarray}
\subsubsection{The initial conditions}

Finally, the initial data are prescribed:
\begin{eqnarray}\label{initialdata}
&&\vartheta(0,\cdot)=\vartheta_0(\cdot),~\rho(0,\cdot) = \rho_0, ~(\rho\mathbf{u})(0, \cdot)=(\rho\bb{u})_0, \\\label{initialdata1}
&&w(0,\cdot) = w_0,~\partial_t w(0,\cdot) = v_0, ~ \theta(0,\cdot) = \theta_0.
\end{eqnarray}
We assume that initial data satisfy the following regularity properties:
\begin{eqnarray} 
&&\rho_0\in L^{\frac{5}{3}}(\Omega^{w_0}), \quad \rho_0 \geq 0, \quad \rho_0 \not\equiv 0, \quad \rho_{0}|_{\mathbb{R}^3 \setminus \Omega^{w_0}} =0,\label{initial1}\\[3mm]
&&\rho_0>0 \text{ in } \{(X,z)\in \Omega^{w_0}: (\rho\bb{u})_0(X,z) >0\}, \quad  \frac{(\rho \bb{u})_0^2}{\rho_0} \in L^1(\Omega^{w_0}),\label{initial2}\\[2mm]
&&\vartheta_0 >0 \quad \text{a.e in } \Omega^{w_0}, \quad (\rho s)_0 = \rho_0 s(\vartheta_0,\rho_0)\in L^1(\Omega^{w_0}), \quad\quad\quad\quad \label{initial3}\\[3mm]
&& v_0\in L^2(\Gamma), \quad  \sqrt{\alpha_2} v_0 \in H^1(\Gamma), \ w_0 \in H^2(\Gamma),\ \theta_0 \in L^2(\Gamma),\quad \theta_0\geq 0, \\[3mm]
&& { E_0:=\displaystyle{\int_{\Omega^{w_0}} \Big( \frac{1}{2\rho_0} |(\rho\bb{u})_0|^2 + \rho_0 e(\vartheta_0,\rho_0)  \Big) +   \int_{\Gamma} \Big(\frac{1}{2}|v_0|^2 + \frac{1}{2} |\Delta w_0|^2 +\frac{\alpha_2}{2}|\nabla v_0|^2+ \frac{1}{2} |\theta_0|^2  \Big)} < \infty,} \quad\quad\quad\quad\label{initial4}
\end{eqnarray}
and the following compatibility condition:
\begin{eqnarray}\label{ICComp}
  a_{\partial\Omega}< w_0<b_{\partial\Omega} \quad \text{on }\Gamma.
\end{eqnarray}

\subsection{Main result and significance}

Before stating the main result, we need to introduce \textbf{Constitutive relations}
of the quantities in \eqref{conteq}-\eqref{enteq} in terms of the independent state variables that characterize the material properties of the fluid. Here we use the quite general constitutive relations from \cite[Section 1.4]{feireislnovotny} which we briefly list for the convenience of the reader.
We assume the viscosity coefficients $\mu$ and $\zeta$ are continuously differentiable functions of the absolute temperature, namely $\mu,\ \zeta \in C^1[0,\infty)$ and satisfy\footnote{The strict positivity of $\zeta$ ensures that $||\nabla\bb{u}+\nabla^\tau \bb{u}||_{L_{t,x}^2}$ can be controlled by the energy, which gives us the uniform bounds for $\bb{u}$ (see Lemma $\ref{korn}$ and $\eqref{nablau2}$).}
	\bFormula{mu_1}
	0< \underline{\mu} (1+ \vt) \leq \mu(\vt) \leq \overline{\mu} (1+\vt),
	\quad
	\sup_{\vt \in [0,\infty)} |\mu'(\vt)| \leq \overline{m},
	\eF
	\bFormula{eta_1}
	 0<\underline{\zeta}(1+\vt) \leq \zeta(\vt) \leq \overline{\zeta} (1+\vt).
	\eF
The heat coefficient $\kappa$ can be decomposed into two parts
	\bFormula{kappa_1}
\kappa(\vt) = \kappa_M (\vt) + \kappa_R (\vt)
	\eF
where $\kappa_M, \ \kappa_R \in C^1[0,\infty)$ and
	\bFormula{kappa_2}
	0< \underline{\kappa_R} (1+ \vt^3) \leq \kappa_R (\vt) \leq \overline{\kappa_R}(1+ \vt^3),
	\eF
	\bFormula{kappa_3}
	0< \underline{\kappa_M}   (1+ \vt) \leq \kappa_M (\vt) \leq \overline{\kappa_M}  (1+ \vt).
	\eF
In the above formulas $\underline{\mu}$, $\overline{\mu}$, $\overline{m}$, $\overline{\eta}$, $\underline{\kappa_R}$, $\overline{\kappa_R}$, $\underline{\kappa_M} $, $\overline{\kappa_M}$ are positive  constants.

The quantities $p$, $e$, and $s$ are continuously differentiable functions for positive values of $\vr$, $\vt$ and satisfy Gibbs' equation
	\bFormula{gibs}
	\vt D s(\vr,\vt) = D e(\vr,\vt) + p(\vr,\vt) D\left( \frac{1}{\vr}  \right) \mbox{ for all } \vr, \ \vt > 0.
	\eF
Further, we assume the following state equation for the pressure and the internal energy
\begin{equation}\label{p1p}
p(\vr,\vt) = p_M(\vr,\vt)+ p_R(\vt), \quad p_R(\vt) = \frac a3 \vt^4,\ a>0,
\end{equation}
\begin{equation}\label{e1e}
e(\vr,\vt) = e_M(\vr,\vt) + e_R(\vr,\vt), \quad \vr e_R(\vr,\vt) = a\vt^4,
\end{equation}
and
\begin{equation}\label{s1s}
s(\vr,\vt) = s_M(\vr,\vt) + s_R(\vr,\vt), \quad \vr s_R(\vr,\vt) = \frac 43 a\vt^3.
\end{equation}
According to the hypothesis of thermodynamic  stability the molecular components satisfy
	\bFormula{pm_1}
	\frac{\partial p_M}{\partial \vr} >0 \mbox{ for all } \vr, \ \vt >0
	\eF
and
	\bFormula{em1}
	0<\frac{\partial e_M}{\partial \vt} \leq c \mbox{ for all }  \vr, \ \vt >0.
	\eF
Moreover
	\bFormula{em2}
	\lim_{\vt\to 0^+} e_M(\vr,\vt) = \underline{e}_M(\vr) > 0 \mbox{ for any fixed } \vr >0,
	\eF	
and
	\bFormula{em3}
	\left| \vr \frac{\partial e_M(\vr,\vt)}{\partial \vr} \right| \leq c e_M(\vr, \vt) \mbox{ for all } \vr, \ \vt >0.
	\eF
We suppose also that there is a function $P$ satisfying
	\bFormula{p2p}
	P \in C^1[0,\infty), \ P(0)=0,\  P'(0)>0,
	\eF
and two positive constants $0< \underline{Z} < \overline{Z}$ such that
	\bFormula{pm2}
	p_M(\vr,\vt) =  \vt^{\frac{5}{2}} P\left( \frac{\vr}{\vt^{\frac{3}{2}}} \right)
	\mbox{ whenever } 0< \vr \leq \underline{Z} \vt^{\frac{3}{2}},\mbox{ or, } \vr > \overline{Z} \vt^{\frac{3}{2}}
	\eF
and
	\bFormula{pm3}
	p_M(\vr,\vt) = \frac{2}{3} \vr e_M (\vr,\vt) \mbox{ for } \vr > \overline{Z}\vt^{\frac{3}{2}}.
	\eF

\begin{rem}
A prototype of the above pressure law reads
\begin{eqnarray*}
    p(\rho,\vartheta)=c_1 \rho^{\frac{5}{3}}+c_2 \rho \vartheta + \frac{a}{3} \vartheta^4,
\end{eqnarray*}
with the corresponding internal energy and specific entropy of the form
\begin{eqnarray*}
    &&e(\rho,\vartheta)=\frac{3}{2}c_1 \rho^{\frac{2}{3}}+c_v  \vartheta + \frac{a}{\rho} \vartheta^4, \\
    &&s(\rho,\vartheta)=c_v \ln \vartheta- c_2 \ln \rho + \frac{4a}{3\rho} \vartheta^3,
\end{eqnarray*}
where $a,c_1,c_2,c_v>0$.
\end{rem}
We will denote system \eqref{shelleq}-\eqref{enteq}, \eqref{kinc}-\eqref{initialdata1} together with the described constitutive relations \eqref{mu_1}-\eqref{pm3} by FSI-HEAT.
The main result of the paper is:
\begin{thm}\label{mainth}
Let initial data $(\rho_0,(\rho\bb{u})_0,\vartheta_0,(\rho s)_0, w_0,v_0,\theta_0)$ satisfy assumptions \eqref{initial1}-\eqref{ICComp} and $\alpha_1+\alpha_2>0$. Moreover, assume that the hypotheses \eqref{mu_1}-\eqref{pm3} are satisfied. Then there exists $T>0$ and a weak solution to FSI-HEAT system in the sense of Definition \ref{weaksolution} defined on $(0,T)$. Moreover, either $T=+\infty$ or the domain $\Omega^w(t)$ degenerates as $t\to T$.
\end{thm}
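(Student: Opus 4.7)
The plan is to construct the weak solution by a multi-level approximation scheme built on the three tools announced in the abstract: decoupling the fluid and shell subproblems, penalizing the coupling conditions, and extending the fluid domain to a fixed container. At the outermost level I would introduce a Rothe time-discretization: on each subinterval $[t_n,t_{n+1}]$ of length $\tau$ one freezes the geometry using the shell displacement from the previous step, so that the fluid lives on a prescribed moving domain $\Omega^{w_n}(t)$ and the shell equation is driven by the fluid traces from the previous step. This reduces the full FSI-HEAT problem to (a) a Navier-Stokes-Fourier problem on a given moving domain, for which the machinery of \cite{feireislnovotny} and subsequent works on moving domains can be invoked, and (b) the linear thermoelastic shell system \eqref{shelleq}--\eqref{shellheateq} with prescribed right-hand sides, solvable by a standard Galerkin/energy argument.

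Next I would couple back by a penalization/extension procedure. Concretely, extend the fluid unknowns $(\vr,\bu,\vt)$ to a fixed large domain $\hat\Omega$ containing every admissible $\Omega^w(t)$, and add (i) a large-friction term of the form $\frac{1}{\varepsilon}\chi_{\hat\Omega\setminus\Omega^{w_n}(t)}\bu$ that forces the velocity outside the physical fluid to vanish and at the interface to match $\partial_t w\,\bn$, (ii) a penalization of the temperature trace that drives $\vt\circ\Phi_w\to\theta$, and (iii) an artificial-pressure/regularization $\delta\vr^\beta$ if needed to obtain the effective viscous flux identity. The uniform estimates are dictated by the energy identity \eqref{enid} and the entropy inequality: using \eqref{mu_1}--\eqref{eta_1} and a Korn-type inequality on $\Omega^w(t)$ one controls $\nabla\bu$ in $L^2_{t,x}$; \eqref{kappa_1}--\eqref{kappa_3} together with the entropy production furnish $L^2_t H^1_x$ bounds on $\log\vt$ and $\vt^{3/2}$; the shell part of \eqref{enid} combined with $\alpha_1+\alpha_2>0$ delivers $w\in L^\infty(H^2)$, $\partial_t w\in L^\infty(L^2)\cap L^2(H^1)$ and $\theta\in L^\infty(L^2)\cap L^2(H^1)$. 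These bounds are the vehicle for passing first $\tau\to 0$, then the penalization/artificial-pressure parameters to $0$.

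The hard part will be threefold. First, one must obtain strong compactness of the density on a moving, extended domain: the effective viscous flux identity of Lions/Feireisl has to be adapted so that the commutator argument is not spoiled by the penalization layer $\hat\Omega\setminus\Omega^w(t)$, and the oscillations-defect inequality must survive the domain extension. Second, identifying the limit temperature is delicate because one needs the trace $\vt\circ\Phi_w = \theta$ to pass to the limit simultaneously with the convective term $\vr s\bu$ and the radiative contributions in \eqref{p1p}--\eqref{s1s}; I expect this to require a careful Div-Curl type argument combined with the entropy production bound and strong convergence of $\bu$ on $\Gamma^w_T$ produced by the penalization. Third, the geometry must be controlled: $w$ is a priori only in $H^2(\Gamma)\hookrightarrow C^0(\Gamma)$ with modulus of continuity in time inherited from $\partial_t w\in L^2(H^1)$, so one concludes that either $w$ remains uniformly inside $(a_{\partial\Omega},b_{\partial\Omega})$, which propagates the construction to all $t$, or it touches the boundary of this interval in finite time, which is exactly the degeneracy alternative in the statement. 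The decoupling/penalization/extension design is precisely what allows the first two difficulties to be reduced, at each step, to an already-established variant on a given moving domain, while the third is built into the continuation argument at the end.
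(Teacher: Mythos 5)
Your plan is in the same family as the paper's (decouple, penalize, extend, then remove the layers), but there are two places where it would break down as written.

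First, the mechanism you propose for coupling fluid and shell does not actually enforce the kinematic conditions. A Brinkman-type friction $\tfrac{1}{\varepsilon}\chi_{\hat\Omega\setminus\Omega^{w_n}(t)}\bu$ only forces $\bu\to 0$ outside the physical domain; it says nothing about $\bu$ matching $\partial_t w\,\bn$ on $\Gamma^{w}(t)$, and a fortiori nothing about the temperature trace. The paper instead \emph{never} penalizes the velocity in the exterior at all: the exterior fluid simply lives with degenerate (tapered) viscosity $\mu_\omega^w=f_\omega^w\mu$, heat conductivity $\kappa_\nu^w$ and radiative coefficient $a_\eta^w$, and the fact that $\rho\equiv 0$ outside $\Omega^w(t)$ is a \emph{consequence} of the transport structure of the continuity equation together with $\rho_0|_{B\setminus\Omega^{w_0}}=0$ (Lemma~\ref{pressvanish}, a level-set argument). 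The kinematic coupling, on the other hand, is enforced by penalizing the \emph{traces} in the decoupled subproblems via $\tfrac{\delta}{\Delta t}$ terms of the form $\int_\Gamma\frac{\bb{v}^{n+1}-\partial_t w^{n+1}\bn}{\Delta t}\cdot\boldsymbol\psi$ and $\int_\Gamma\frac{\tau^{n+1}-\theta^{n+1}}{\Delta t}\tilde\psi$, i.e.\ the penalty acts across the interface only and is tied to the time step, not to a separate $\varepsilon$. If you insist on a friction formulation you would need a relative-velocity penalty against an extension of $\partial_t w\,\bn$, and it is not clear the entropy formulation survives that.

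Second, and more seriously, your third "hard part" as phrased misses the genuinely distinctive obstruction: because the test functions for the coupled momentum and entropy equations do \emph{not} vanish on $\Gamma^w(t)$, weak convergence of the pressure in $L^1$ on compact subsets away from the interface is not enough. One must rule out a defect measure of $p$ concentrating \emph{on the moving interface itself}. This is the content of Lemma~\ref{pressconc}: the authors construct test fields $\boldsymbol\varphi^K_{\Delta t}$ supported in a thin strip around $\Gamma^{w_{\Delta t}}$ with $\nabla\cdot\boldsymbol\varphi^K_{\Delta t}\ge K-c$ there and uniform bounds on the remaining terms of the momentum equation; taking $K$ large shows the pressure mass near the interface is arbitrarily small. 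It is precisely in closing the term $\int\rho\bu\cdot\partial_t\boldsymbol\varphi^K_{\Delta t}$ in that lemma that $\alpha_1+\alpha_2>0$ is used ($\partial_t\nabla w\in L^\infty_tL^2_x$ or $L^2_tL^2_x$ is needed; without it one only has $\partial_t w\in L^2_tL^r_x$, $r<4$, which is short of what $\rho\bu\in L^\infty_t L^{5/4}_x$ requires). Your invocation of $\alpha_1+\alpha_2>0$ as merely "delivering" generic estimates does not explain why it is assumed, and without this lemma you cannot pass the pressure term to the limit at all. Relatedly, you would also need the construction of a sequence of \emph{admissible} test functions $(\boldsymbol\varphi_{\Delta t},\psi)$ converging to $(\boldsymbol\varphi,\psi)$ in $C^3$ (Section on approximate test functions), since the test space depends on $w$ and naively restricting a fixed $\boldsymbol\varphi$ to $\Gamma^{w_{\Delta t}}$ inherits only the low regularity of $w_{\Delta t}$.
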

The main novelties of the present work are:
\begin{enumerate}
    \item We consider a model where both the fluid and the structure conduct heat and there is heat coupling given through temperature continuity and entropy flux given in $\eqref{heat}$ and $\eqref{heatf}$. While, in the context of fluid-structure interaction, heat-conducting fluids have been studied in \cite{breit2021navier} and thermoelastic structures have been studied in \cite{trwa3}, to the best of our knowledge this is the first work that takes into account heat conduction of both components, and heat exchange between the components. One nice consequence of this approach is that we were able to prove that the plate temperature is positive which does not hold if one considers just a linear plate model without coupling it to the fluid with heat exchange.
    \item The fluid model we consider is from \cite{feireislnovotny}, and the same result also holds for the model given in $\eqref{another:model}$ below. This way, a wide range of physical cases are covered. In particular, we allow pressure laws which are not uniform and can change depending on the region of the $(\rho,\vartheta)$-plane (see \eqref{pm2} and \eqref{pm3}). This is very important in the case of gases, as they become fully ionized in the degenerate region $\rho>\vartheta^{\frac32}\overline{Z}$ and change their behavior.
    \item From the methodological point of view, we introduced a new approach to construct the approximate solutions. More precisely, we introduced a construction scheme that combines three approximation methods - decoupling, penalization and domain extension. This allows to decouple the problem and directly use sophisticated results and methods that are already developed to study compressible fluid on the moving domains \cite{heatfl,commoving,feireisl2011convergence}. We emphasize that such approach significantly simplifies the proof and has potential for further generalization. Namely, since our approach is modular, it is robust and can be adapted to more general fluid and/or structure models.
  \end{enumerate}
  
Let us first point out, that due to stronger imbedding results, our result easily holds in the case when the fluid is 2D and the structure is 1D, even in the case $\alpha_1=\alpha_2=0$:
\begin{cor}\label{Cor2D}
The conclusion of Theorem \ref{mainth} holds in 2D/1D case for $\alpha_1+\alpha_2\geq 0$.
\end{cor}

Inclusion of rotational inertia or viscoelasticity is needed in our proof. However, we use this assumption only in certain parts of the proof related to the convergence properties of the approximate solutions, and do not use it in the construction. Therefore, if we consider somewhat less general constitutive assumptions, our result holds without adding an additional regularization to the plate equations (see Remark $\ref{pressureremark}$ for a detailed explanation).

\begin{cor}\label{CorSimple}
Let us assume $p(\rho,\vartheta)=\rho^\gamma+\rho\vartheta+\frac{a}{3}\vartheta^4$ and $\gamma>\frac{12}{7}$ in 3D or $\gamma>1$ in 2D. Then conclusion of Theorem \ref{mainth} holds also for $\alpha_1=\alpha_2=0$.
\end{cor}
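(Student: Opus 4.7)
The plan is to revisit the proof of Theorem \ref{mainth} and isolate those steps that actually rely on $\alpha_1+\alpha_2>0$, then argue that under the concrete pressure law $p=\rho^{\gamma}+\rho\vartheta+\frac{a}{3}\vartheta^{4}$ with $\gamma>\frac{12}{7}$ each of these steps can be replaced by a classical compressible Navier--Stokes--Fourier compactness argument, so no additional structural regularisation is needed.

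First, the decoupling--penalization--domain-extension construction does not use positivity of $\alpha_{1}$ or $\alpha_{2}$, so it produces a sequence of approximate solutions $(\rho_{n},\bu_{n},\vartheta_{n},w_{n},\theta_{n})$ without modification. Setting $\alpha_{1}=\alpha_{2}=0$ in the energy identity \eqref{enid} yields the uniform bounds $w_{n}\in L^{\infty}_{t}H^{2}(\Gamma)$, $\partial_{t}w_{n}\in L^{\infty}_{t}L^{2}(\Gamma)$, $\theta_{n}\in L^{\infty}_{t}L^{2}(\Gamma)\cap L^{2}_{t}H^{1}(\Gamma)$, together with the standard a priori bounds on the fluid triplet. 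Relative to Theorem \ref{mainth} we lose only the $L^{2}_{t}H^{1}$ control on $\partial_{t}w_{n}$. However, $w_{n}\in L^{\infty}_{t}H^{2}\cap W^{1,\infty}_{t}L^{2}$ still suffices for Aubin--Lions to give $w_{n}\to w$ in $C([0,T];H^{2-\varepsilon}(\Gamma))$; this secures uniform convergence of the flow $\tilde{\Phi}_{w_{n}}$ and preservation of \eqref{ICComp} on a uniform time interval, which is all that is needed for the limit geometry and for passing to the limit in the traces of $\bu_{n}$ and $\vartheta_{n}$ on the interface.

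The main obstacle, and the single place where the argument of Theorem \ref{mainth} genuinely exploits $\alpha_{1}+\alpha_{2}>0$, is the strong convergence of $\rho_{n}$: the structural dissipation was used there to obtain extra boundary regularity and thereby upgrade the pressure estimates up to $\Gamma^{w_{n}}_{T}$. With the concrete pressure above and $\gamma>\frac{12}{7}$ this route can be avoided. Using the moving-domain Bogovski\v{\i} test-function technique of \cite{heatfl,commoving}, one obtains $\rho_{n}^{\gamma}\in L^{1+\delta}_{t,x}$; the mixing term $\rho_{n}\vartheta_{n}$ is equi-integrable thanks to the combination $\rho_{n}\in L^{\infty}_{t}L^{\gamma}$ with $\gamma>\frac{12}{7}$ and $\vartheta_{n}\in L^{2}_{t}L^{6}$, while $\vartheta_{n}^{4}$ is controlled through the radiation part $\kappa_{R}$ of the heat conductivity. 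The Feireisl--Lions effective viscous flux identity together with the theory of renormalised solutions of the continuity equation then yields $\rho_{n}\to\rho$ a.e., which is precisely the statement replacing the need for $\alpha_{1},\alpha_{2}>0$ in the general argument.

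The remaining limit passages are insensitive to the vanishing of $\alpha_{1},\alpha_{2}$ and can be carried out exactly as in Theorem \ref{mainth}: compactness of $\vartheta_{n}$ from the entropy inequality and Div--Curl arguments in the spirit of \cite{feireislnovotny}; compactness of $\theta_{n}$ and $\partial_{t}w_{n}$ in $L^{2}(\Gamma_{T})$ via Aubin--Lions on the shell heat equation \eqref{shellheateq} (testing it against its own time integral gives the needed negative-order control on $\partial_{t}\theta_{n}$); and stability of the coupling conditions \eqref{kinc}--\eqref{heatf} on the limit interface from the uniform convergence of $\tilde{\Phi}_{w_{n}}$. I expect the hardest technical point to be making the moving-domain Bogovski\v{\i} estimate uniform when the only available time regularity of the interface is H\"older $1/2$ in a Sobolev norm; this is exactly where the restriction $\gamma>\frac{12}{7}$ is genuinely used, because it is the threshold that makes $\rho_{n}^{\gamma}$ integrable with a margin large enough to absorb the boundary roughness via the existing domain-extension in the approximation scheme.
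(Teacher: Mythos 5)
You have located the right neighbourhood of the proof, but you have misidentified the specific mechanism that actually fails when $\alpha_{1}=\alpha_{2}=0$, and consequently you attribute the threshold $\gamma>\tfrac{12}{7}$ to the wrong estimate.

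The effective-viscous-flux / renormalised-continuity machinery and the interior Bogovski\v{\i} estimate that you describe do \emph{not} use $\alpha_{1}+\alpha_{2}>0$: in this paper the improved pressure estimate is always purely local (footnote to Definition~\ref{weaksolution}), and the div--curl and Feireisl--Lions arguments work exactly as in the general case. The step that genuinely needs repair is Lemma~\ref{pressconc}, the no-concentration-of-pressure lemma at the moving interface. There one tests the momentum equation with a vector field $\boldsymbol\varphi_{\Delta t}^{K}$ whose divergence is of order $K$ in a strip $S_{K}(t)$ of width $K^{-1}$ around $\Gamma^{w_{\Delta t}}$. The critical term to absorb is $\int_{0}^{T}\int_{B}\rho_{\Delta t}\bb{u}_{\Delta t}\cdot\partial_{t}\boldsymbol\varphi_{\Delta t}^{K}$, and $\partial_{t}\boldsymbol\varphi_{\Delta t}^{K}$ carries a factor $K\,\partial_{t}w_{\Delta t}$; with $\alpha_{1}+\alpha_{2}>0$ one has $\partial_{t}w_{\Delta t}$ in a strong enough space to close the H\"older balancing in \eqref{importantestimate}, but with $\alpha_{1}=\alpha_{2}=0$ the energy only gives $\partial_{t}w_{\Delta t}\in L^{\infty}_{t}L^{2}(\Gamma)$. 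The ingredient you are missing is the kinematic coupling itself: since $\partial_{t}w_{\Delta t}\bb{n}=\gamma_{|\Gamma^{w_{\Delta t}}}\bb{u}_{\Delta t}$, the trace theorem of \cite[Corollary~2.9]{LenRuz} recovers $\partial_{t}w_{\Delta t}\in L^{2}(0,T;H^{s}(\Gamma))$ for every $s<\tfrac12$, hence $\partial_{t}w_{\Delta t}\in L^{2}(0,T;L^{r}(\Gamma))$ for every $r<4$. Feeding this into the H\"older estimate of Lemma~\ref{pressconc} together with $\rho_{\Delta t}\bb{u}_{\Delta t}\in L^{\infty}_{t}L^{2\gamma/(\gamma+1)}_{x}$ (which requires the specific law $p=\rho^{\gamma}+\rho\vartheta+\tfrac{a}{3}\vartheta^{4}$, so that $\rho\in L^{\infty}_{t}L^{\gamma}_{x}$ with $\gamma>\tfrac53$) is exactly where the condition on $\gamma$ is imposed; it has nothing to do with making a Bogovski\v{\i} estimate uniform against boundary roughness.

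You also overlook the second place where $\alpha_{1}+\alpha_{2}>0$ enters: Lemma~\ref{pressvanish} (density vanishes outside $\Omega^{w}(t)$) requires $w\in H^{1}(\Gamma_{T})$, i.e.\ $\partial_{t}\nabla w\in L^{2}(\Gamma_{T})$, which the energy no longer supplies when $\alpha_{1}=\alpha_{2}=0$. The paper's remedy (Remark~\ref{pressureremark}) is to insert an artificial viscous term $-\delta\,\partial_{t}\nabla w\cdot\nabla\psi$ into the approximate coupled momentum equation, which restores the needed regularity at the approximation level and is then sent to zero. Without addressing this, your scheme does not produce approximate solutions for which the confinement $\rho_{|B\setminus\Omega^{w}(t)}=0$ can be established, and the extension limit breaks down. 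In short: your proposal correctly identifies that the issue is concentrated at the moving boundary, but (i) the relevant lemma is the pressure-concentration Lemma~\ref{pressconc}, not a boundary Bogovski\v{\i} estimate; (ii) the substitute for the structural dissipation is the trace theorem applied to the kinematic coupling, which you do not use; and (iii) the density-vanishing lemma needs a separate modification of the scheme.
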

\begin{rem}\label{prototype}
The above pressure form is a prototype of the following pressure law where $p_M$ is given by
\begin{equation}\label{another:model}
p_M(\rho, \vartheta) = \vartheta^{\frac{\gamma}{\gamma-1}} P \left( \frac{\rho}{\vartheta^{\frac{1}{\gamma-1}}} \right),
\mbox{ whenever } 0< \vr \leq \underline{Z} \vt^{\frac{3}{2}},\mbox{ or, } \vr > \overline{Z} \vt^{\frac{3}{2}},
\end{equation}	
\begin{equation*}
	p_M(\vr,\vt) = (\gamma-1) \vr e_M (\vr,\vt) \mbox{ for } \vr > \overline{Z}\vt^{\frac{3}{2}}.
\end{equation*}
instead of (\ref{pm2})--(\ref{pm3}). For more details about such model, see \cite{NP} for the $3D$ case and \cite{MR2807430} for the $2D$ case.
\end{rem}
\begin{rem}
Even though our analysis is done for the $3D$ fluid flow, we also included results for $2D$ case (Corollaries \ref{Cor2D}, \ref{CorSimple} and Remark \ref{prototype}). Namely, the analysis for the $2D$ case is analogous for the following reasons. First, all the embedding results that we use are stronger in the $2D$ case. Second, our proof relies on the theory for the Navier-Stokes-Fourier system which is analogous for $2D$ case and the main difference in comparison to $3D$ case is that one can prove results with milder restriction on exponent $\gamma$ (see also recent result in $2D$ case \cite{PS21}). 

\end{rem}
\begin{rem}
It seems that most thermoelastic plate models in literature (including our own) are based upon the assumptions that the temperature is small with respect to the reference temperature, and that the entropy depends linearly on temperature \cite[Chapter 1.5]{lagnese1989boundary}. While this makes sense for the stability analysis of plates, it is in contrast with the thermodynamical properties of our fluid, which has strictly positive and arbitrarly large temperature and has a component of the entropy which depends logarithmically on the temperature. A possible solution to this problem might be to derive a new nonlinear thermoelastic plate model specially for our interaction problem, under the assumption that the thermodynamical properties of the plate are similar to the ones of the fluid, and then study its interaction with a heat-conducting fluid. This is a topic for future research.
\end{rem}

\subsection{Outline of the proof and organization of the paper}
The proof is split into three parts that correspond to various level of approximation in the construction of approximate solutions. Each step includes limiting procedure which uses standard tool for analysis of the compressible fluids equations. 
\begin{itemize}
    \item[Step 1] \textbf{Existence of a weak solution to the extended problem.} In the first step we define the extended problem on a large domain $B$. The extension involves approximation parameters $\eta,\omega,\nu,\lambda$ and follows approach from \cite{heatfl,commoving}. We also add pressure regularization with approximation parameter $\delta$. This regularization improves integrability of the pressure and by now standard in the analysis of compressible Navier-Stokes equations.
    In this step we prove existence of a weak solution to the extended problem (see Definition \ref{weaksolutionap1}). In order to construct approximate solutions to the extended problem, we introduce time step parameter $\Delta t$ and a time marching scheme that combines a decoupling approach (of the fluid and the structure) with penalization of the kinematic coupling conditions. The discussion about ideas behind this approach is included at the end of Section \ref{ExtensionSec}. The main advantage of such approach is that the existence result for the fluid part \cite{heatfl} can be directly used.
    \item[Step 2] \textbf{Extension limit} Here we study limit as extension parameters $\eta,\omega,\nu,\lambda\to 0$. In this part we adapt ides from \cite{heatfl} to pass to the limit and obtain a solution which is defined of the physical domain $Q^w_T$.
    \item[Step 3] \textbf{Pressure regularization limit}. The last step of the proof is standard and is common in all existence proofs of a weak solution to compressible fluid equations.
\end{itemize}

The paper is organized as follows. In the Section \ref{sec2}, we introduce a concept of weak solution. The next three sections correspond to the three steps of the proof as described above: Step 1 (Existence of a weak solution to the extended problem) is explained in Section \ref{penalization}, Step 2 (limit of extension parameters) and Step 3 (pressure regularization limit) are discussed in Section \ref{penlimit} and Section \ref{sec5} respectively. Finally, we include two Appendices where some technical results are proved. 

\section{Weak solution}\label{sec2}
We will use a concept of a weak solution that corresponds to the concept used in \cite[Chapter 2]{feireislnovotny} for the Navier-Stokes-Fourier system. However, since we consider a coupled moving boundary problem, there are some significant differences which we briefly describe before introducing the formal definition. First, since the fluid domain is defined by the structure displacement, we work with function spaces defined on the non-cylindrical domains in time and space. Moreover, from the energy inequality we have $w\in H^2(\Gamma)$ which is below threshold of Lipschitz regularity that is needed for standard functional analytic results on Sobolev spaces, such as the trace Theorem and Korn's inequality. This functional framework for FSI problems is by now standard, so we just refer to \cite[Section 1.3]{CDEG05} or \cite[Section 2]{LenRuz}. In particular, we will use the Lagrangian trace operator $\gamma_{|\Gamma^w}:C(\Omega^w)\to C(\Gamma)$ is defined as
\begin{eqnarray*}
    \gamma_{|\Gamma^w} f:= f_{|\Gamma^w} \circ \Phi_w. 
\end{eqnarray*}
and then extended to a continuous and bounded operator $\gamma_{|\Gamma^w}:W^{1,p}(\Omega^w)\to W^{1-\frac{1}{r},r}(\Gamma)$, for any $1<r<p$, \cite[Corollary  2.9]{LenRuz} (see also \cite{BorisTrace}). For time dependent functions and displacements, we will usually write
\begin{eqnarray*}
    (\gamma_{|\Gamma^w} f)(t,\cdot) = \gamma_{|\Gamma^w(t)} f(t,\cdot).
\end{eqnarray*}
Moreover, we prove the following version of Korn's inequality (Lemma $\ref{ourkorn}$):
\begin{eqnarray*}
    ||\bb{u}||_{ W^{1,p}(\Omega^w)}^2 \leq C \Big[||  \nabla \bb{u} + \nabla^\tau \bb{u}||_{L^2(\Omega^w)}^2 + \int_{\Omega^w} \rho |\bb{u}|^2 \Big], \quad \text{for any } p<2,
\end{eqnarray*}
where constant $C$ blows up as $p$ goes to $2$, and therefore we will use $W^{1,p}$, $p<2$ space instead of $H^1$ in the definition of weak solution. Finally, solution and test spaces depend on solution and are not linear spaces. The kinematic coupling conditions \eqref{kinc}, \eqref{heat} are incorporated into the solution spaces, while the dynamic coupling conditions \eqref{dync} and \eqref{heatf} are implicitly prescribed via the weak formulation. Namely, $\mathbb{S}(\vartheta,\nabla\bb{u}), p(\rho,\vartheta)$ and $\frac{\bb{q}(\vartheta)}{\vartheta}$ do not have well-defined traces on the interface, and therefore \eqref{dync} and \eqref{heatf} are only formally satisfied in a weak formulation.

\begin{mydef}\label{weaksolution}(\textbf{Weak solution})
We say that $(\vartheta,\rho ,\bb{u}, w, \theta)$ is a weak solution to the FSI-HEAT problem with initial data $(\rho_0,(\rho\bb{u})_0,\vartheta_0,(\rho s)_0, w_0,v_0,\theta_0)$ satisfying the assumptions $\eqref{initial1}-\eqref{ICComp}$, if the following conditions hold: 
\begin{enumerate}
\item { $\rho \geq 0$}, $\rho \in L^\infty(0,T; L^{\frac{5}{3}}(\mathbb{R}^3))\cap L_{loc}^q([0,T]\times \Omega^w(t))$, for some\footnote{Here, the additional integrability of density can only be obtained on compact subsets, rather than on the whole domain. This is because $w$ is not regular enough to ensure Lipschitz regularity of the fluid domain $\Omega^w(t)$ (which also changes in time), so the standard improved estimates of the density based on the Bogovskii operator do not hold up to the boundary.} $q>\frac{5}{3}$;\\
$\bb{u}\in L^2(0,T; W^{1,p}(\Omega^w(t)))$ for any $p<2$, $\rho |\bb{u}|^2 \in L^\infty(0,T; L^1(\mathbb{R}^3))$;\\
$\vartheta>0$ a.e. in $Q_T^w$, $\vartheta \in L^\infty(0,T; L^4(\Omega^w(t)))$;\\
 $\vartheta, \nabla \vartheta \in L^2(Q_T^w)$, $\log \vartheta,  \nabla \log \vartheta \in L^2(Q_T^w)$;\\
$\rho s, \rho s \bb{u}, \frac{q}{\vartheta} \in L^1(Q_T^w)$; \\
$w \in L^\infty(0,T; H^2(\Gamma))\cap W^{1,\infty}(0,T; L^2(\Gamma))$, $\alpha_1 w\in H^1(0,T;H^1(\Gamma))$, $\alpha_2 w\in W^{1,\infty}(0,T;H^1(\Gamma))$;\\
$\theta>0$ a.e. on $\Gamma_T$, $\theta \in L^\infty(0,T; L^2(\Gamma))\cap L^2(0,T; H^1(\Gamma))$;\\
$\ln\theta\in L^2(0,T;H^s(\Gamma))$, for any\footnote{This comes from the fact that $\gamma_{|\Gamma^w}\ln\vartheta=\ln\theta$, see Lemma \ref{lntrace}.} $s<\frac12$. 
\item The coupling conditions $\partial_t w  \bb{n} = \gamma_{|\Gamma^w} \mathbf{u}$ and $\vartheta =  \gamma_{|\Gamma^w} \theta$ hold on $\Gamma_T$.
\item The renormalized continuity equation
\begin{eqnarray}\label{reconteqweak}
 \int_{Q_T^w} \rho B(\rho)( \partial_t \varphi +\bb{u}\cdot \nabla \varphi) =\int_{Q_T^w} b(\rho)(\nabla\cdot \bb{u}) \varphi +\int_{\Omega^{w_0}} \rho_0 B(\rho_0) \varphi(0,\cdot),
\end{eqnarray}
holds for all $\varphi \in C_c^\infty([0,T)\times \overline{\Omega^w(t)})$ and any $b\in L^\infty (0,\infty) \cap C[0,\infty)$ such that $b(0)=0$ with $B(\rho)=B(1)+\int_1^\rho \frac{b(z)}{z^2}dz$.
\item The coupled momentum equation
\begin{eqnarray}
    &&\int_{Q_T^w} \rho \bb{u} \cdot\partial_t \boldsymbol\varphi + \int_{Q_T^w}(\rho \bb{u} \otimes \bb{u}):\nabla\boldsymbol\varphi +\int_{Q_T^w} p(\rho,\vartheta) (\nabla \cdot \boldsymbol\varphi) -  \int_{Q_T^w} \mathbb{S}(\vartheta, \nabla\bb{u}): \nabla \boldsymbol\varphi\nonumber\\
    &&+\int_{\Gamma_T} \partial_t w \partial_t \psi - \int_{\Gamma_T}\Delta w \Delta \psi-\alpha_1 \bint_{\Gamma_T} \partial_t \nabla w \cdot \nabla \psi + \alpha_2 \bint_{\Gamma_T}\partial_t \nabla w\cdot \partial_t \nabla \psi  +\int_{\Gamma_T} \nabla \theta \cdot \nabla \psi  \nonumber\\
    &&= - \int_{\Omega^{w_0}}(\rho\mathbf{u})_0\cdot\boldsymbol\varphi(0,\cdot)-  \int_\Gamma v_0 \psi(0,\cdot)-\alpha_2  \int_\Gamma  \nabla  v_0 \cdot \nabla\psi(0,\cdot),  \label{momeqweak}
\end{eqnarray}
holds for all $\boldsymbol\varphi \in C_c^\infty([0,T)\times \overline{\Omega^w(t)})$ and $\psi\in C_c^\infty([0,T)\times \Gamma)$ such that $\gamma_{|\Gamma^w}\boldsymbol\varphi  =\psi \bb{n}$ on $\Gamma_T$. 
\item The coupled entropy inequality
\begin{eqnarray}
    &&\int_{Q_T^w} \rho s( \partial_t \varphi + \bb{u}\cdot \nabla \varphi) -\int_{Q_T^w}\frac{\kappa(\vartheta) \nabla \vartheta \cdot \nabla \varphi}{\vartheta}+ \int_{Q_T^w} \frac{\varphi}{\vartheta}\Big( \mathbb{S}(\vartheta, \nabla \bb{u}):\nabla \bb{u}
    +\frac{\kappa(\vartheta)|\nabla \vartheta|^2}{\vartheta}\Big) \nonumber\\
    &&+\int_{\Gamma_T} \theta \partial_t \tilde{\psi} -\int_{\Gamma_T} \nabla\theta \cdot \nabla \tilde{\psi} +\int_{\Gamma_T} \nabla w \cdot \nabla \partial_t \tilde{\psi} \nonumber\\
    &&\leq - \int_{\Omega^{w_0}} \rho_0 s(\vartheta_0,\rho_0) \varphi(0,\cdot) -  \int_{\Gamma} \theta_0 \tilde{\psi}(0,\cdot) - \int_{\Gamma} \nabla w_0 \cdot \nabla\tilde{\psi}(0,\cdot) \label{entineqweak}
\end{eqnarray}
holds for all non-negative $\varphi \in C_c^\infty([0,T)\times \overline{\Omega^w(t)})$ and $\tilde{\psi}\in C_c^\infty([0,T) \times\Gamma)$ such that $\gamma_{|\Gamma^w}\varphi =\tilde{\psi}$ on $\Gamma_T$.
\item The energy inequality
\begin{eqnarray}\label{enineq}
  &&\displaystyle{\int_{\Omega^w(t)} \Big( \frac{1}{2} \rho |\bb{u}|^2 + \rho e(\rho,\vartheta)  \Big)(t) +  \int_{\Gamma} \Big(\frac{1}{2}|\partial_t w|^2 + \frac{1}{2} |\Delta w|^2 +\frac{\alpha_2}{2}|\nabla \partial_t w|^2 +\frac{1}{2} |\theta|^2  \Big)(t)}  \nonumber \\ && + \displaystyle{\int_0^T \int_\Gamma \alpha_1 |\nabla \partial_t w|^2} + |\nabla \theta|^2 \leq \displaystyle{\int_{\Omega^{w_0}} \Big( \frac{1}{2\rho_0} |(\rho\bb{u})_0|^2 + \rho_0 e(\vartheta_0,\rho_0)  \Big)} \nonumber \\ && + \int_{\Gamma} \Big(\frac{1}{2}|v_0|^2 + \frac{1}{2} |\Delta w_0|^2+ \frac{\alpha_2}{2}|\nabla v_0|^2 + \frac{1}{2} |\theta_0|^2  \Big)
\end{eqnarray}
holds for all $t\in [0,T]$.
\end{enumerate}
\end{mydef}

\begin{rem}
(1) The derivation of coupled momentum equation for smooth solutions is standard and we refer to \cite{compressible,trwa3} for more details. However, the coupled entropy inequality appears here for the first time and it is derived in Appendix B.\\
(2) In the above definition, we have both entropy and energy balances in the form of inequalities. While entropy balance being satisfied as an inequality is standard, the energy one is different from standard theory \cite{feireislnovotny}. This is a consequence of the fact that we have additional dissipation terms on the interface due to the coupling with the thermoelastic shell, and weak solution is not regular enough to obtain compactness results needed to preserve the energy equality in the limiting procedure. Although this definition may seem restrictive, we argue that is sufficient. Namely, if we assume that a weak solution is regular enough, we can obtain that both entropy and energy inequalities hold as equalities. This is proved in Appendix B by following the ideas from \cite{poul}.
\end{rem}

\begin{rem}(Achieving the initial data). By the standard theory,  one deduces from $\eqref{reconteqweak}$ that
\begin{eqnarray*}
    \rho \in C_w(0,T; L^{\frac{5}{3}}(\mathbb{R}^3)),
\end{eqnarray*}
and since $\eqref{momeqweak}$ holds for any compactly supported function $\boldsymbol\varphi\in C_c^\infty(Q_T^w)$ (with $\psi=0$), one also has
\begin{eqnarray*}
    \rho \bb{u} \in C_w(0,T; L^{\frac{5}{4}}(\mathbb{R}^3)).
\end{eqnarray*}
Consequently, the equation $\eqref{momeqweak}$ implies
\begin{eqnarray*}
    \int_\Gamma \partial_t w(t,y) \psi(y) +\alpha_2  \int_\Gamma  \nabla \partial_t w(t,y) \cdot \nabla\psi(y) \to \int_\Gamma v_0 \psi+\alpha_2  \int_\Gamma  \nabla  v_0 \cdot \nabla\psi, \quad \text{as } t\to 0,
\end{eqnarray*}
for any $\psi \in C^\infty(\Gamma)$, which  gives by density argument 
\begin{eqnarray*}
    \lim\limits_{t\to 0}\left[\int_\Gamma \partial_t w \psi +\alpha_2  \int_\Gamma \nabla \partial_t w \cdot \nabla\psi\right](t) = \int_\Gamma v_0 \psi+\alpha_2  \int_\Gamma  \nabla  v_0 \cdot \nabla\psi, \quad \text{for any } \psi \in L^2(\Gamma), \text{ with } \alpha_2\psi \in H^1(\Gamma).
\end{eqnarray*}
However, since  there is $\psi\in H^2(\Gamma)$ such that $(\psi-\alpha_2\Delta\psi)=\phi$ for every function $\phi\in L^2(\Gamma)$, this implies 
\begin{eqnarray*}
    &&\lim\limits_{t\to 0}\left[\int_\Gamma \partial_t w \psi +\alpha_2  \int_\Gamma \nabla \partial_t w \cdot \nabla\psi\right](t) =  \lim\limits_{t\to 0}\left[\int_\Gamma \partial_t w \psi - \int_\Gamma  \partial_t w  \alpha_2\Delta\psi\right](t) \\
    &&= \lim\limits_{t\to 0}\left[\int_\Gamma \partial_t w \big(\psi -\alpha_2 \Delta\psi\big)\right](t) = \lim\limits_{t\to 0}\left[\int_\Gamma \partial_t w \phi \right](t)
    = \int_\Gamma v_0 \phi, \quad \text{for any } \phi \in L^2(\Gamma).
\end{eqnarray*}
Therefore
\begin{eqnarray*}
    &&\lim\limits_{t\to 0}\left[\int_\Gamma \partial_t w \psi \right](t) = \int_\Gamma v_0 \psi,\quad \text{for any } \psi \in L^2(\Gamma),\\
    && \lim\limits_{t\to 0}\left[ \alpha_2\int_\Gamma \nabla \partial_t w \cdot \nabla\psi\right](t)  =\alpha_2\int_\Gamma  \nabla  v_0 \cdot \nabla\psi,\quad \text{for any } \psi \in  H^1(\Gamma).
\end{eqnarray*}
Finally, the entropy inequality $\eqref{entineqweak}$ yields
\begin{eqnarray*}
    \lim\limits_{t\to 0}\left[\int_{\Omega^w} \big(\rho s(\rho,\vartheta)\big) + \int_{\Gamma} \theta\right](t) \geq  \int_{\Omega^{w_0}} \rho_0 s(\vartheta_0,\rho_0)+  \int_{\Gamma} \theta_0.
\end{eqnarray*}
\end{rem}

\section{Step 1 - Extended problem}\label{penalization}
The first step in the construction of an approximate solution is to extend the problem to a large fixed domain which contains the physical domain $\Omega^w(t)$ for every $t\in [0,T]$. Here we follow the approach from \cite{heatfl}. Let $R>0$ be large enough so that $\Omega^w(t) \subset B:= \{|X|<2R\}$ for all $t\in [0,T]$. Note that  the displacement $w$ is bounded due to the energy estimates and thus such $R$ exists. Our constructed  solution will satisfy the energy estimates, so this assumption is justified.
The idea is to extend the problem onto $B$ (see figure $\ref{penalpic}$) by extending the initial data, the viscosity coefficients and the heat conductivity coefficients in the following way. 

\begin{figure}[h!]
\centering\includegraphics[scale=0.2]{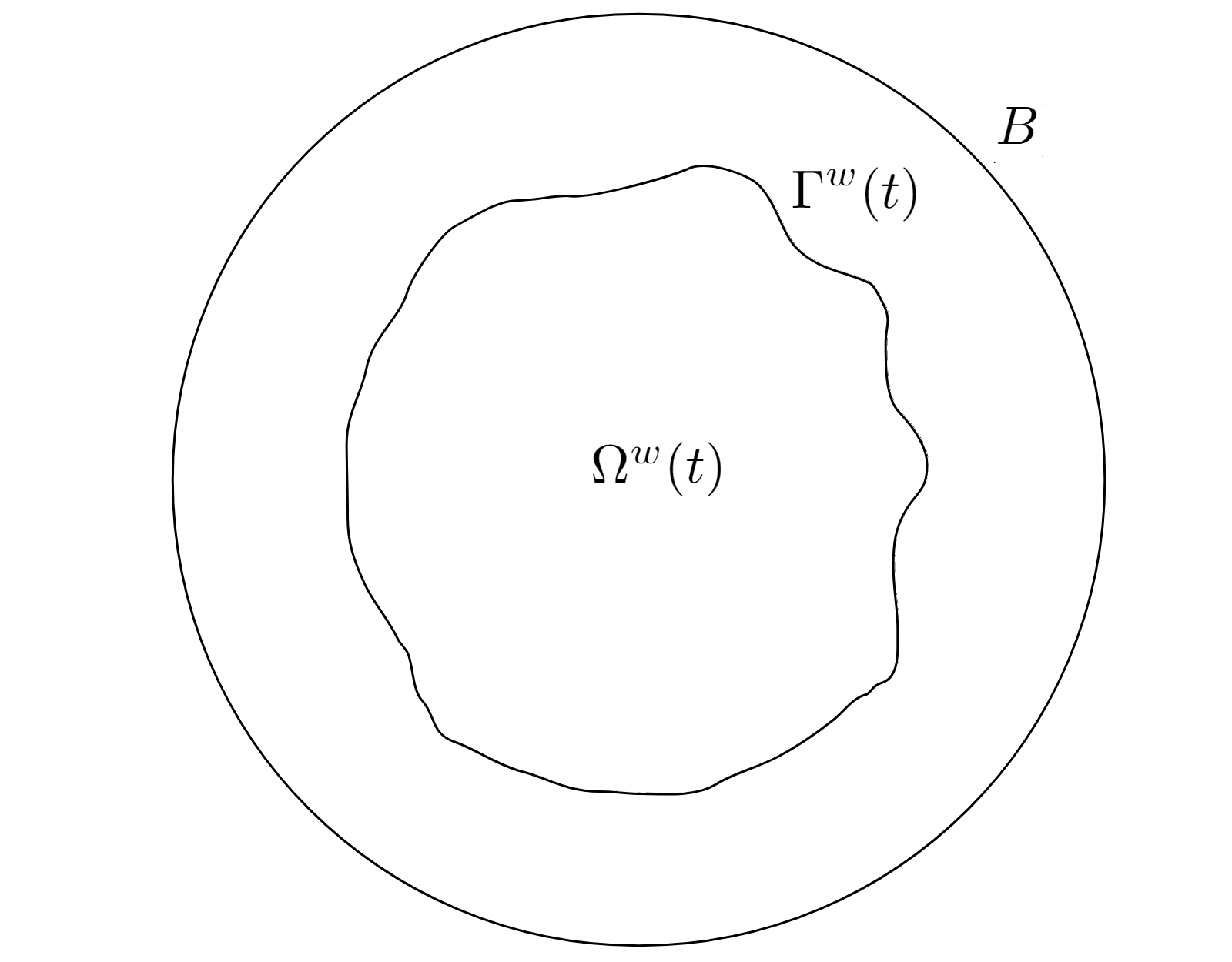}
\caption{The fluid domain extended from $\Omega^w(t)$ to $B$.}
\label{penalpic}
\end{figure}

\subsection{Extension of data and coefficients}\label{ExtensionSec}
For a given $\omega>0$ and the structure displacement $w$, the shear viscosity coefficients $\mu$ and $\zeta$ (see \eqref{mu_1}--\eqref{eta_1}) are approximated as
\begin{eqnarray}\label{ext-mu}
    \mu_{\omega}^w:=f_\omega^w \mu, \quad \zeta_{\omega}^w:=f_\omega^w \zeta
\end{eqnarray}
where $f_\omega^w$ Lipschitz continuously depends on $w$ and
\begin{eqnarray*}
&&f_{\omega}^w \in C_c^\infty([0,T]\times \mathbb{R}^3), \quad 0<\omega \leq f_{\omega}^w(t,x) \leq 1, \text{ in } [0,T] \times B,\nonumber\\
&& f_{\omega}^w(t,\cdot)_{|\Omega^w(t)} = 1,\text{ for all } t \in [0,T],\quad ||f_{\omega}^{w}||_{L^p(((0,T)\times B)\setminus Q_T^w)}\leq C \omega \quad \text{for some } p\geq \frac{5}{3}. \label{fomega}
\end{eqnarray*}

\noindent Next, for a given $\nu>0$, the heat conductivity coefficient $\kappa$ (see $ \eqref{kappa_1}-\eqref{kappa_3}$) is approximated by
\begin{eqnarray}\label{ext-k}
\kappa_{\nu}^w(\vartheta,t,x)= \chi_{\nu}^w(t,x) \kappa(\vartheta), \text{ where } \chi_\nu^w =1 \text{ in } Q_T^w \text{ and } \chi_\nu^w = \nu \text{ in } ((0,T)\times B) \setminus Q_T^w,
\end{eqnarray}
and similarly, the coefficient $a$ corresponding to the radiative part of the pressure, internal energy and energy (see \eqref{p1p}-\eqref{s1s}) is approximated by
\begin{eqnarray}\label{ext-a}
    a_\eta^w: = \chi_\eta^w a, \text{ where } \chi_\eta^w =1 \text{ in } Q_T^w \text{ and } \chi_\eta^w = \eta \text{ in } ((0,T)\times B) \setminus Q_T^w.
\end{eqnarray}
The pressure is approximated as follows:
\begin{eqnarray}\label{ext-p}
p_{\eta,\delta}^w:=p_M(\rho,\vartheta)+\frac{a_\eta^w}{3}\vartheta^4+\delta \rho^\beta,\quad \beta \geq 4, \quad \delta>0,
\end{eqnarray}
and the internal energy and specific entropy are approximated accordingly
\begin{eqnarray}\label{ext-e}
e_\eta^w:=e_M(\rho,\vartheta)+a_\eta^w\frac{\vartheta^4}{\rho}, \quad s_\eta^w(\rho,\vartheta):=s_M(\rho,\vartheta)+\frac{4}{3}a_\eta^w \frac{\vartheta^3}{\rho}.
\end{eqnarray}

The initial data $\rho_0,(\rho \bb{u})_0,\vartheta_0$ defined on $\Omega^{w_0}$ are extended and approximated by $\rho_{0,\delta},(\rho \bb{u})_{0,\delta},\vartheta_{0,\delta}$ as follows:
\begin{eqnarray}\label{init-ap}
&&\rho_{0,\delta}\geq 0,~~ \rho_{0,\delta}\not\equiv 0,~~ \rho_{0,\delta}|_{\mathbb{R}^3\setminus \Omega^{w_{0}}}=0, ~~\int_B \big(\rho_{0,\delta}^{\frac{5}{3}}+\delta\rho_{0,\delta}^\beta\big) \leq c,\\ &&\rho_{0,\delta}\to \rho_0 \text{ in }L^{\frac{5}{3}}(B), ~~ |\{\rho_{0,\delta}<\rho_0 \}|\to 0, \text{ as } \delta \to 0,\\
&&(\rho\bb{u})_{0,\delta}=\begin{cases} (\rho\bb{u})_0, &\text{ if }\rho_{0,\delta}\geq \rho_0, \\
0, &\text{ otherwise}\end{cases},\quad \int_{B} \frac{1}{\rho_{0,\delta}}|(\rho\bb{u})_{0,\delta}|^2 \leq c,\\
&& \vartheta_{0,\delta} \geq \underline{\vartheta} \geq 0 \text{ and } \vartheta_{0,\delta} \in L^\infty(B)\cap C^{2+\nu}(B),
\end{eqnarray}
and $\rho_{0,\delta},\vartheta_{0,\delta}$ are such that
\begin{eqnarray}\label{init-ap1}
\begin{aligned}
&\int_{\Omega^{w_{0}}} \rho_{0,\delta} e(\rho_{0,\delta},\vartheta_{0,\delta}) \to \int_{\Omega^{w_0}} \rho_0 e(\rho_{0},\vartheta_{0}),\\[2mm]
&\rho_{0,\delta} s(\rho_{0,\delta},\vartheta_{0,\delta}) \rightharpoonup \rho_0 s(\rho_{0},\vartheta_{0}), \text{ in }L^1(\Omega^{w_0}),
\end{aligned}
\end{eqnarray}
as $\delta \to 0$.

Now we define a weak solution to the extended FSI-HEAT problem on $B$ in the following way:
\begin{mydef}\label{weaksolutionap1}(\textbf{Weak solution to extended problem})
We say  that  $ (\vartheta,\rho ,\bb{u}, w, \theta)$ is a weak solution to the extended FSI-HEAT problem if it satisfies the initial conditions \eqref{initialdata}, \eqref{initialdata1} and

\begin{enumerate}
\item {$\rho \geq 0$}, $\rho \in L^\infty(0,T; L^{\beta}(\mathbb{R}^3))\cap L_{loc}^q([0,T]\times( \overline{B}\setminus\Gamma^w(t)))$, for some $q>\frac{5}{3}$;\\
Other regularity assumptions are the same as in Definition \ref{weaksolution}, but with the fluid quantities defined on $(0,T)\times B$ instead of $Q^w_T$. 
\item The coupling conditions $\partial_t w  \bb{n} = \gamma_{|\Gamma^w} \mathbf{u}$ and $\vartheta =  \gamma_{|\Gamma^w} \theta$ hold on $\Gamma_T$.
\item The renormalized continuity equation
\begin{eqnarray}\label{RCE1}
    \int_0^T \int_B \rho B(\rho)( \partial_t \varphi +\bb{u}\cdot \nabla \varphi) =\int_0^T \int_B b(\rho)(\nabla\cdot \bb{u}) \varphi +\int_{B} \rho_{0,\delta} B(\rho_{0,\delta}) \varphi(0,\cdot),
\end{eqnarray}
holds for all $\varphi \in C_c^\infty([0,T)\times B)$ and any $b\in L^\infty (0,\infty)\cap C[0,\infty)$ such that $b(0)=0$ with $B(\rho)=B(1)+\int_1^\rho \frac{b(z)}{z^2}dz$.
\item The coupled momentum equation of the form
\begin{align}
    &\int_0^T \bint_B \rho \bb{u} \cdot\partial_t \boldsymbol\varphi + \int_0^T \bint_B (\rho \bb{u} \otimes \bb{u}):\nabla\boldsymbol\varphi +\int_0^T \bint_B  p_{\eta,\delta}^w(\rho,\vartheta) (\nabla \cdot \boldsymbol\varphi) -  \int_0^T \bint_B \mathbb{S}_\omega^w(\vartheta, \nabla\bb{u}): \nabla \boldsymbol\varphi\nonumber\\
    &+\bint_{\Gamma_T}\partial_t w \partial_t \psi -\bint_{\Gamma_T}\Delta w \Delta \psi -\alpha_1 \bint_{\Gamma_T} \partial_t \nabla w \cdot \nabla \psi + \alpha_2 \bint_{\Gamma_T}\partial_t \nabla w\cdot \partial_t \nabla \psi +\bint_{\Gamma_T}\nabla \theta \cdot \nabla \psi \nonumber\\
    &= - \int_{B}(\rho\mathbf{u})_0\cdot\boldsymbol\varphi(0,\cdot)- \int_\Gamma v_0 \psi(0,\cdot)-\alpha_2  \int_\Gamma  \nabla  v_0 \cdot \nabla\psi(0,\cdot) , \label{coupledmomeq1}
\end{align}
holds for all $\boldsymbol\varphi \in C_c^\infty([0,T)\times B)$ and $\psi \in C^\infty([0,T)\times \Gamma)$ such that $\psi \bb{n}=\gamma_{|\Gamma^{w}}\boldsymbol\varphi$ on $\Gamma_T$. 
\item The coupled entropy balance of the form
\begin{eqnarray}
    &&\int_0^T\int_{B} \rho s_\eta^w( \partial_t \varphi + \bb{u}\cdot \nabla \varphi) -\int_0^T\int_{B} \frac{\kappa_\nu^w(\vartheta) \nabla \vartheta \cdot \nabla \varphi}{\vartheta}+\langle \sigma_{\omega,\nu}^w;\varphi\rangle_{[\mathcal{M},C]([0,T]\times \overline{B})}\nonumber\\
    &&+\lambda \int_0^T\int_{B}   \vartheta^4 \varphi
    +\int_0^T\int_{\Gamma} \theta \partial_t \tilde{\psi} -\int_0^T\int_{\Gamma}  \nabla\theta \cdot \nabla \tilde{\psi} +\int_0^T\int_{\Gamma} \nabla w \cdot \nabla \partial_t \tilde{\psi}  \nonumber\\
    &&=-\int_{B} \rho s_\eta^w(\vartheta_{0,\delta},\rho_{0,\delta}) \varphi(0,\cdot) - \int_{\Gamma} \theta_0 \tilde{\psi}(0,\cdot) - \int_{\Gamma} \nabla w_0 \cdot \nabla\tilde{\psi}(0,\cdot)  \label{coupledenteq1}
\end{eqnarray}
 holds for all non-negative $\varphi \in C_c^\infty([0,T)\times B)$ and $\tilde{\psi} \in C_c^\infty([0,T)\times\Gamma)$ such that $\tilde{\psi} =\gamma_{|\Gamma^{w}}\varphi$ on $\Gamma_T$, where
 \begin{eqnarray*}
   \sigma_{\omega,\nu}^w\geq \frac{1}{\vartheta}\Big( \mathbb{S}_\omega^w(\vartheta, \nabla \bb{u}):\nabla \bb{u}+\frac{\kappa_\nu^w(\vartheta)|\nabla \vartheta|^2}{\vartheta}\Big).
\end{eqnarray*}
\item The following energy inequality
\begin{eqnarray}\label{ExtenededEiIn}
    &&\displaystyle{\int_{B} \Big( \frac{1}{2} \rho |\bb{u}|^2 + \rho e_\eta^w(\rho,\vartheta)+\frac{\delta}{\beta-1}\rho^{\beta} \Big)(t)} + \lambda\int_{0}^t  \int_{B} \vartheta^5 +\frac{1-\delta}{2}|| \partial_t w(t)||_{L^2(\Gamma)}^2 +\frac{1}{2}|| \Delta w(t)||_{L^2(\Gamma)}^2 \nonumber \\
    && \quad+\frac{\alpha_2}{2}||\nabla \partial_t w(t)||_{L^2(\Gamma)}^2 + \frac{1-\delta}{2}||\theta(t)||_{L^2(\Gamma)}^2+ \int_{0}^t \int_{\Gamma}\big(\alpha_1 |\nabla \partial_t w|^2+ | \nabla \theta|^2 \big)\nonumber \\
    &&  \leq \displaystyle{\int_{B} \Big( \frac{1}{2 \rho_{0,\delta }} |(\rho\bb{u})_{0,\delta }|^2 + \rho_{0,\delta } e_\eta^w(\vartheta_{0,\delta },\rho_{0,\delta })}+\frac{\delta}{\beta-1}\rho_{0,\delta }^{\beta} \Big) \nonumber\\
    &&\quad+ \frac{1}{2}|| v_{0,\delta }||_{L^2(\Gamma)}^2+\frac{1}{2}|| \Delta w_{0,\delta }||_{L^2(\Gamma)}^2 + \frac{\alpha_2}{2}||\nabla v_{0,\delta}||^2+ \frac{1}{2}||\theta_{0,\delta }||_{L^2(\Gamma)}^2,
\end{eqnarray}
holds for all $t\in(0,T]$.
\end{enumerate}
\end{mydef}
\begin{rem}
The solution defined in the above definition depends on the parameters $\omega, \eta,\nu,\lambda,\delta$. However, in order to simplify the notation, we will not write this explicitly. Throughout the rest of the paper, we adapt the convention that we do write this explicit dependence on parameter only in the limiting procedure related to that parameter. When there is no possibility of confusion we will omit the parameters at all.
\end{rem}

The advantage of this formulation is that the fluid equations are given on a time-independent domain $B$ and therefore, following ideas from \cite{heatfl}, we can use the theory and the ideas developed for the Navier-Stokes-Fourier system. However, note the system in Definition \ref{weaksolutionap1} is still coupled and depends on the geometry through condition 2 and conditions on the test functions. Therefore, it is far from straightforward how to decouple the system and solve the fluid part separately. Here we use the decoupling method based on operator splitting from \cite{trwa3} (see also \cite{SubBorARMA} where the splitting method in the context of FSI was introduced) which penalizes the fluid velocity and temperature to ensure the kinematic coupling conditions. More precisely, we split the time interval $(0,T)$ into subintervals of length $\Delta t$. The approximate solution is constructed via time-marching procedure where in each time sub-interval we solve separately the fluid and the structure sub-problems (which are continuous in time). The decoupling is achieved thanks to the relaxation of the kinematic boundary conditions which are satisfied only approximately via penalization with parameter $\frac{1}{\Delta t}$. Moreover, the sub-problems ``communicate'' with each other via the penalization terms. Physically, in the approximate problems we make the interface transparent so the fluid can pass through it (see figure \ref{penalpic2}). In the limit $\Delta t\to 0$, the kinematic coupling conditions are satisfied and the interface $\Gamma^w(t)$ becomes impermeable, and therefore we obtain a weak solution to the extended problem from Definition \ref{weaksolutionap1}.

\begin{figure}[h!]
\centering\includegraphics[scale=0.2]{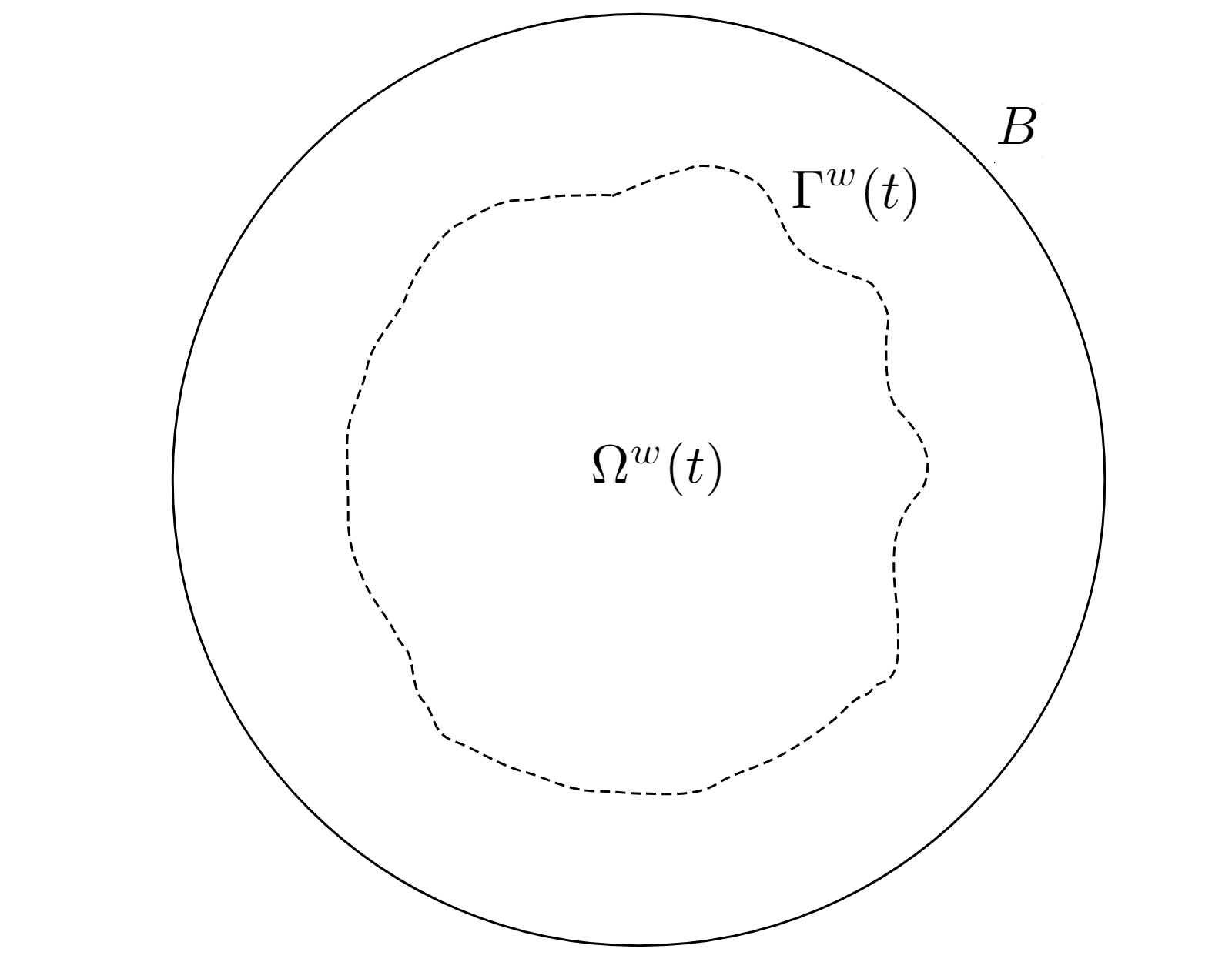}
\caption{The penalized fluid problem on an extended domain $B$. Here, the boundary $\Gamma^w(t)$ is dashed in order to emphasize that the fluid can pass through it.}
\label{penalpic2}
\end{figure}
\newpage

\subsection{Vanishing density outside of the physical domain $\Omega^w$}
Before constructing a solution to the extended problem, we show that density vanishes outside of the physical domain:

\begin{lem}\label{pressvanish}
Let $\rho \in L^\infty(0,T;L^3(B)), \bb{u} \in L^2(0,T; H_0^1(B))$ satisfy the renormalized continuity equation $\eqref{RCE1}$ on $(0,\tau)$ for any $\tau\in(0,T)$ and let $w\in L^\infty(0,T; H^2(\Gamma))\cap W^{1,\infty}(0,T; L^2(\Gamma))\cap H^1(\Gamma_T)$. If
\begin{eqnarray*}
    \rho(0,\cdot)_{|B\setminus \Omega^{w_0}} \equiv 0
\end{eqnarray*}
then
\begin{eqnarray*}
    \rho_{|B\setminus \Omega^w(t)}(t,\cdot) \equiv 0, \quad \text{for a.a. }t \in (0,T).
\end{eqnarray*}
\end{lem}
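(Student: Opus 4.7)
The lemma is a mass-conservation statement: since $\rho_0\equiv 0$ on $B\setminus\Omega^{w_0}$ and since (in the extended framework in which the lemma is applied) the kinematic coupling $\gamma_{|\Gamma^w}\bb{u}=\partial_t w\,\bb{n}$ holds, the interface $\Gamma^w(t)$ is a material surface and mass cannot cross it. I would realize this idea at the level of \eqref{RCE1} by testing against a smooth approximation of the characteristic function of the exterior region. Using the extension $\tilde\Phi_w(t,\cdot):\Omega\to\Omega^w(t)$ from Section \ref{GeometryDef}, set $d(t,x):=d_\Omega(\tilde\Phi_w^{-1}(t,x))$, where $d_\Omega$ is the signed distance to $\partial\Omega$, negative inside and positive outside; then $\{d(t,\cdot)>0\}\cap B=B\setminus\overline{\Omega^w(t)}$. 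For a fixed $\tau\in(0,T)$, take a non-decreasing $\phi_\varepsilon\in C^\infty(\R)$ with $\phi_\varepsilon=0$ on $(-\infty,0]$ and $\phi_\varepsilon=1$ on $[\varepsilon,\infty)$, together with a temporal cutoff $\xi_\varepsilon\in C_c^\infty([0,T))$ satisfying $\xi_\varepsilon(0)=1$ and $\xi_\varepsilon\to\mathbf{1}_{[0,\tau]}$, and put $\varphi_\varepsilon(t,x):=\xi_\varepsilon(t)\phi_\varepsilon(d(t,x))$ (after a further mollification in time, cf.\ the obstacle below).

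\textbf{Main computation.} Substituting $\varphi_\varepsilon$ into \eqref{RCE1} with $B\equiv 1$, $b\equiv 0$ and expanding derivatives, the resulting identity has the schematic form $I_\varepsilon+II_\varepsilon=III_\varepsilon$, with
\[
I_\varepsilon=\int_0^T\!\!\int_B\rho\,\xi'_\varepsilon\phi_\varepsilon(d),\qquad II_\varepsilon=\int_0^T\!\!\int_B\rho\,\xi_\varepsilon\phi'_\varepsilon(d)\bigl(\partial_t d+\bb{u}\cdot\nabla d\bigr),
\]
and $III_\varepsilon$ a boundary term at $t=0$ that vanishes identically since $\phi_\varepsilon(d(0,\cdot))=0$ on $\mathrm{supp}\,\rho_{0,\delta}\subset\Omega^{w_0}$. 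In the limit $\varepsilon\to 0$, $I_\varepsilon$ recovers (up to sign) $\int_{B\setminus\Omega^w(\tau)}\rho(\tau,\cdot)$ for a.e.\ $\tau$. The crux is to show $II_\varepsilon\to 0$: by the chain rule and the identity $\partial_t\tilde\Phi_w^{-1}\circ\tilde\Phi_w=-(\nabla\tilde\Phi_w)^{-1}\partial_t\tilde\Phi_w$ (obtained by differentiating $\tilde\Phi_w^{-1}\circ\tilde\Phi_w=\mathrm{id}$ in $t$), one computes that on $\Gamma^w(t)=\tilde\Phi_w(t,\partial\Omega)$
\[
\bigl(\partial_t d+\bb{u}\cdot\nabla d\bigr)\circ\tilde\Phi_w=\nabla d_\Omega\cdot(\nabla\tilde\Phi_w)^{-1}\bigl(\bb{u}\circ\tilde\Phi_w-\partial_t\tilde\Phi_w\bigr),
\]
and the vector in parentheses vanishes on $\partial\Omega$ by the kinematic coupling $\bb{u}\circ\tilde\Phi_w=\partial_t w\,\bb{n}=\partial_t\tilde\Phi_w$. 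Since $\phi'_\varepsilon(d)$ concentrates on a tubular neighborhood of $\Gamma^w(t)$ of width $\varepsilon$, where $\partial_t d+\bb{u}\cdot\nabla d$ is small, a concentration/co-area estimate combined with $\rho\in L^\infty_t L^{5/3}_x$ and $\bb u\in L^2_tH^1_x$ yields $II_\varepsilon\to 0$.

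\textbf{Conclusion and main obstacle.} Combining the above gives $\int_{B\setminus\Omega^w(\tau)}\rho(\tau,\cdot)\,dx=0$ for a.e.\ $\tau\in(0,T)$; together with $\rho\geq 0$ (which follows from the renormalized continuity equation applied to a suitable renormalization supported on $\{\rho<0\}$, starting from $\rho_{0,\delta}\geq 0$) this gives $\rho\equiv 0$ on $B\setminus\Omega^w(\tau)$. The principal technical obstacle is the low temporal regularity of $w$: with $w\in W^{1,\infty}(0,T;L^2(\Gamma))\cap L^\infty(0,T;H^2(\Gamma))$ the mapping $\tilde\Phi_w$, and hence $d$, are not classically differentiable in time, so the chain-rule computation above is only formal. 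The natural remedy is to first mollify $w$ in time to a smooth $w_\eta$, carry out the argument with $\tilde\Phi_{w_\eta},d_\eta$ in place of $\tilde\Phi_w,d$, and then send $\eta\to 0$, exploiting the strong convergence $w_\eta\to w$ and the linearity of \eqref{RCE1} in $\rho$ to transfer the mass-vanishing property to the limit.
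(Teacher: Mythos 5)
Your strategy is the right one, and it is essentially the paper's: both proofs transport a level-set function by the extended flow $\tilde\Phi_w^B$, test the renormalized continuity equation against a cut-off of that level set supported on the exterior region, reduce the time--space derivative to $\rho(\bb{V}-\bb{u})\cdot\nabla g$ with $\bb{V}$ the domain velocity, and exploit that this quantity vanishes (in trace) on $\Gamma^w(t)$. So the conceptual content matches.

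The gap is in how you dispose of the boundary layer integral $II_\varepsilon$. You write that ``a concentration/co-area estimate combined with $\rho\in L^\infty_t L^{5/3}_x$ and $\bb u\in L^2_tH^1_x$ yields $II_\varepsilon\to 0$,'' but concentration of the support alone does not close the argument. On the shell $\{0<d<\varepsilon\}$ you have $\phi'_\varepsilon(d)\sim\varepsilon^{-1}$ against a set of measure $O(\varepsilon)$; these balance, so you still need a quantitative statement that $\partial_t d+\bb u\cdot\nabla d$ is genuinely small on the shell, not merely that its trace on $\Gamma^w$ vanishes. A direct H\"older bound using the Lebesgue exponents you cite gives a factor of order $\varepsilon^{-2/3}$ that is not obviously absorbed. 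The paper fills this exact hole with Hardy's inequality: since $(\bb V-\bb u)\cdot\nabla g$ belongs to $L^2_t W^{1,3/2}_0$ of the exterior region, Hardy gives $\delta^{-1}(\bb V-\bb u)\cdot\nabla g\in L^2_t L^{3/2}_x$ (with $\delta$ the distance to $\Gamma^w\cup\partial B$), and then a geometric observation that $\delta(t,X)\leq C\xi$ on the sub-level set $\{0\leq g<\xi\}$ lets the factor $\delta/\xi$ be bounded uniformly, after which the integral vanishes as $\xi\to 0$ by absolute continuity. Without Hardy's inequality (or an equivalent weighted trace estimate) your $II_\varepsilon\to 0$ claim is not justified. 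Note also that the lemma's stated hypothesis is $\rho\in L^\infty_tL^3_x$, not $L^{5/3}_x$; the $L^3$ integrability is what makes the product with the Hardy-controlled term land in $L^2_tL^1_x$.

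Two smaller remarks. First, the time-mollification of $w$ you propose is not needed: the renormalized continuity equation is a weak identity, the test function $\varphi_\varepsilon$ need only be Lipschitz in $(t,x)$ (then approximated in the usual way), and the computation $\partial_t g+\bb u\cdot\nabla g=(\bb V-\bb u)\cdot\nabla g$ is meaningful in $L^1_{t,x}$ under the stated regularity of $w$ (in particular $w\in H^1(\Gamma_T)$ gives $\partial_t\tilde\Phi_w^B\in L^2$); passing through a mollified $w_\eta$ would require re-establishing injectivity of the mollified flow and a separate commutator argument, which are avoidable complications. Second, you correctly note that the passage from $\int_{B\setminus\Omega^w(\tau)}\rho\,dx=0$ to $\rho\equiv 0$ requires $\rho\geq 0$; this is available in the context where the lemma is applied and does not need a separate proof from the renormalization, but it is good to flag that the sign is used.
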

\begin{proof}
The proof follows \cite[Lemma 4.1]{commoving}, which is based on level set approach. Since our flow function is quite different and less regular to the one in the mentioned lemma, we provide a full proof. \\

Let $g_0 \in C^3(B)$ be such that
\begin{eqnarray*}
g_0= \begin{cases}
0, \quad &\text{on }\Gamma\cup \partial B, \\
>0, \quad &\text{in } B\setminus \Omega,\\
<0,\quad  &\text{elsewhere in } \mathbb{R}^3.
\end{cases}
\end{eqnarray*}
and
\begin{eqnarray}
\nabla g_0(X) =h(d(X)) \bb{n}(\pi(X)),\quad 
h(x)\geq c>0 \label{d0assumption1}
\end{eqnarray}
in a small neighbourhood of $\Gamma$ denoted by $S_\Gamma$. Signed distance function $d$ and projection $\pi$ are defined by \eqref{d} and \eqref{p} in Appendix A.  Denote
\begin{eqnarray*}
    \Phi := \tilde{\Phi}_w^B, \quad \bb{V}:= \partial_t \tilde{\Phi}_w^B\circ (\tilde{\Phi}_w^B)^{-1}.
\end{eqnarray*}
where the flow function $\tilde{\Phi}_w^B:[0,T]\times B\to B$ is defined precisely in $\eqref{domainflow}$.  
We introduce the function $g:[0,T]\times B \to \mathbb{R}$ defined as
\begin{eqnarray*}
g(t,X) := g_0(\Phi^{-1}(t,X))
\end{eqnarray*}
which is a solution to the following transport equation
\begin{eqnarray*}
\partial_t g + \nabla g \cdot  \bb{V}=0, \quad
g(0, \cdot  ) = g_0.
 \end{eqnarray*}

Before we proceed, let us calculate on $[0,T]\times\Phi(t,S_\Gamma)$
\begin{eqnarray*}
&&\nabla g (t,X) =  \nabla \big(g_0(\Phi^{-1}(t,X)) \big)=  \nabla g_0\big(\Phi^{-1}(t,X)\big) \cdot \nabla \Phi^{-1}(t,X) \nonumber \\[2mm]
&&= \partial_{\bb{n}(\pi(X))} g_0(\Phi^{-1}(t,X)) \bb{n}(\pi(X))\cdot\nabla \Phi^{-1}(t,X) =  \partial_{\bb{n}(\pi(X))} g_0(\Phi^{-1}(t,X))  \partial_{\bb{n}(\pi(X))} \Phi^{-1}(t,X),
\end{eqnarray*}
and since
\begin{eqnarray*}
    \partial_{\bb{n}(\pi(X))} \Phi^{-1}(t,X) =\frac{1}{1+ f'(d(X)) w(t,\pi(X)) }\bb{n}(\pi(X)),\quad\text{ in } [0,T]\times\Phi(t,S_\Gamma),
\end{eqnarray*}
one obtains by $\eqref{normal}$, $\eqref{jacobianphi1},\eqref{jacobianphi2}$ and $\eqref{d0assumption1}$ 
\begin{eqnarray}\label{normalpositive}
    \nabla g(t,X) = \tilde{h}(t,X) \bb{n}(\pi(X)), \quad 0< c \leq  \tilde{h}(t,X) \leq C, \quad \text{for all }(t,X)\in[0,T]\times\Phi(t,S_\Gamma).
\end{eqnarray}
${}$\\

Next, fix $\xi>0$ and let us test $\eqref{RCE1}$ by
\begin{eqnarray*}
    \varphi=\Big[ \min \Big\{ \frac{1}{\xi}g;1 \Big\}  \Big]^+,
\end{eqnarray*}
to obtain
\begin{eqnarray}\label{passlimithere}
    \int_{B\setminus \Omega^w(t)} (\rho \varphi)(\tau) = \frac{1}{\xi}\int_0^\tau \int_{\{ 0\leq g(t,X)< \xi \}} \Big(\rho\partial_t g+\rho\bb{u} \cdot \nabla g \Big).
\end{eqnarray}
We can now calculate
\begin{eqnarray}\label{anotherone}
    \rho \partial_t g + \rho \bb{u}\cdot \nabla g= \rho \Big( \partial_t g + \bb{u}\cdot \nabla g \Big) = \rho (\bb{V} - \bb{u}) \cdot \nabla g,
\end{eqnarray}
where by $\eqref{phireg}$, $\eqref{d0assumption1}$ and $\eqref{normalpositive}$
\begin{eqnarray}\label{hardy}
    (\bb{V} - \bb{u}) \cdot \nabla g \in L^2(0,T; W_0^{1,\frac{3}{2}}(B\setminus \Omega^w(t))).
\end{eqnarray}
Denoting
\begin{eqnarray*}
    \delta(t,X) := \text{dist}(X,\Gamma^w(t)\cup \partial B), \quad \text{for } (t,X)\in[0,\tau]\times   (B\setminus \Omega^w(t)),
\end{eqnarray*}
$\eqref{hardy}$ and Hardy's inequality give us
\begin{eqnarray}\label{alsohardy}
\frac{1}{\delta}(\bb{V} - \bb{u}) \cdot \nabla g \in L^2(0,\tau;L^{\frac{3}{2}}(B\setminus \Omega^w(t))),
\end{eqnarray}
which by $\eqref{anotherone}$ and $\eqref{passlimithere}$ imply
\begin{eqnarray*}
     \int_{B\setminus \Omega^w(t)} (\rho \varphi)(\tau) = \frac{1}{\xi}\int_0^\tau \int_{\{ 0\leq g(t,X)< \xi \}} \delta\underbrace{\rho\frac{1}{\delta}(\bb{V} - \bb{u}) \cdot \nabla g }_{\in L^2(0,T; L^1(B))}.
\end{eqnarray*}
Thus, taking into consideration
\begin{eqnarray*}
    \mathcal{M}(\{ 0\leq g(t,X)< \xi \}) \to 0, \quad \text{as }\xi\to 0,
\end{eqnarray*}
which follows from the uniform boundedness of $\Phi$ in $C^{0,\alpha}([0,T];C^{0,1-2\alpha}(B))$, $0<\alpha<1/2$, the proof of this lemma will follow by passing to the limit $\xi\to 0$ if we show
\begin{eqnarray*}
(t,X)\in[0,T]\times (B\setminus \Omega^w(t)), \quad  0\leq g(t,X)< \xi \implies   \frac{\delta(t,X)}{\xi}\leq C.
\end{eqnarray*}
 We split the set $B\setminus \Omega^w(t)=S_1(t)\cup S_2(t)$, where
\begin{eqnarray*}
    &&S_1(t):=\{X\in B\setminus\Omega^w(t): \text{dist}(X,\Gamma^w(t))>\text{dist}(X,\partial B)\},\\
    &&S_2(t):=\{X\in B\setminus\Omega^w(t): \text{dist}(X,\Gamma^w(t))\leq\text{dist}(X,\partial B)\}
\end{eqnarray*}
and fix a small enough $\xi_0>0$. First, it is easy to conclude that
\begin{eqnarray*}
    (t,X)\in [0,T]\times S_2(t) , \quad 0\leq g(t,X)=g_0(X)< \xi\leq \xi_0 \implies   \frac{\delta(t,X)}{\xi}\leq C,
\end{eqnarray*}
by the regularity of $g_0$ and $\partial B$, since $\Phi = id$ near $\partial B$. Next, since $\xi_0$ was chosen to be small, one has
\begin{eqnarray*}
    (t,X)\in [0,T]\times S_1(t), \quad 0\leq g(t,X)=g_0(\Phi^{-1}(t,X)) <\xi \leq \xi_0 \implies \Phi^{-1}(t,X) \in S_\Gamma.
\end{eqnarray*}
so
\begin{eqnarray*}
    &&\delta(t,X) \leq |X - w(t,\pi(X)) | \\
    &&\leq \frac{1}{\Big|\min\limits_{[0,T]\times \Phi(t.S_\Gamma)} \nabla g(t,X) \cdot \bb{n}
    (\pi(X))\Big|} (g(t,X) - \underbrace{g(w(t,\pi(X)))}_{=0}) \leq C\xi,
\end{eqnarray*}
from $\eqref{normalpositive}$. Thus, we can pass to the limit $\xi_0\geq\xi\to 0$, which concludes the proof.
\end{proof}

\begin{cor}
Under the assumptions of previous lemma, we have:
\begin{eqnarray*}
    p_{M}(\rho,\vartheta)=0, \quad \text{on } (0,T)\times (B\setminus \Omega^w(t)).
\end{eqnarray*}
\end{cor}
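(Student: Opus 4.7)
The plan is to deduce the vanishing of $p_M$ from the vanishing of $\rho$ established in Lemma \ref{pressvanish} by invoking the structural assumptions on the molecular pressure. The argument is almost immediate, with only one small subtlety concerning the points where the absolute temperature could in principle vanish.

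First, I would apply Lemma \ref{pressvanish} directly: under its hypotheses one has $\rho(t,\cdot)\equiv 0$ on $B\setminus \Omega^w(t)$ for almost all $t\in(0,T)$. Second, I would recall that a weak solution satisfies $\vartheta>0$ almost everywhere (this is part of the regularity in Definition \ref{weaksolutionap1}, inherited from Definition \ref{weaksolution}), so on the set of interest we may assume $\vartheta(t,x)>0$ a.e. Consequently, at a.e. $(t,x)\in(0,T)\times(B\setminus\Omega^w(t))$ we are in the first regime of the constitutive assumption \eqref{pm2}, since the inequality $0=\rho(t,x)\leq \underline{Z}\vartheta(t,x)^{3/2}$ holds trivially. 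The formula in \eqref{pm2} then gives
\begin{equation*}
p_M(\rho(t,x),\vartheta(t,x))=\vartheta(t,x)^{5/2}\,P\!\left(\frac{\rho(t,x)}{\vartheta(t,x)^{3/2}}\right)=\vartheta(t,x)^{5/2}\,P(0)=0,
\end{equation*}
because $P(0)=0$ by \eqref{p2p}. The null set on which $\vartheta=0$ contributes nothing, so we obtain $p_M(\rho,\vartheta)=0$ on $(0,T)\times(B\setminus\Omega^w(t))$.

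There is essentially no obstacle here: the statement is a direct corollary, and the only point worth mentioning is that the pointwise formula \eqref{pm2} is used in the regime $\rho\leq \underline{Z}\vartheta^{3/2}$, which is automatic once $\rho=0$ and $\vartheta>0$. The a.e. positivity of $\vartheta$ removes any ambiguity at the level set $\{\vartheta=0\}$, making the conclusion follow immediately from Lemma \ref{pressvanish} and assumptions \eqref{p2p}, \eqref{pm2}.
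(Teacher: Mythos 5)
Your argument is correct and reaches the right conclusion, but it takes a slightly different route from the paper and has one small technicality worth flagging. The paper simply quotes the estimate $0\leq p_M(\rho,\vartheta)\leq c(\rho^{5/3}+\rho\vartheta)$ from \cite[Page 54, Section 3.2]{feireislnovotny} and sets $\rho=0$; both sides then vanish. You instead unpack the constitutive assumption \eqref{pm2} directly, which is more self-contained but forces you to confront the fact that \eqref{pm2} is stated for $0<\rho\leq\underline{Z}\vartheta^{3/2}$, i.e. for \emph{strictly positive} $\rho$. When $\rho=0$ the formula does not literally apply; you need the additional observation that $p_M$ is continuous in $\rho$ up to $\rho=0$ (implicit in the thermodynamic stability assumptions, or just a consequence of the cited bound), so that $p_M(0,\vartheta)=\lim_{\rho\to 0^+}\vartheta^{5/2}P(\rho/\vartheta^{3/2})=\vartheta^{5/2}P(0)=0$. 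With that limiting step added, your derivation closes the gap. The trade-off: your argument exposes exactly which structural hypotheses (\eqref{p2p}, \eqref{pm2}, positivity of $\vartheta$) are being used, whereas the paper's one-liner is shorter but opaque, hiding the mechanism behind a reference to an upper bound that is itself derived from those same hypotheses.
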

\begin{proof}
This directly follows from the previous lemma and the estimate (see \cite[Page 54, Section 3.2]{feireislnovotny})
\begin{eqnarray*}
    0\leq p_{M}(\rho,\vartheta)\leq c(\rho^{\frac{5}{3}}+\rho\vartheta).
\end{eqnarray*}

\end{proof}

\subsection{The splitting}\label{splitting}

We split the time interval to $N \in \mathbb{N}$ sub-intervals of length $\Delta t = T/N$ (the time derivatives are not discretized). We split the extended problem into two sub-problems, the fluid sub-problem (FSP) and the structure sub-problem (SSP). The splitting is done in the coupled momentum equation $\eqref{coupledmomeq1}$ and the coupled entropy inequality $\eqref{coupledenteq1}$, and the kinematic coupling conditions $\eqref{kinc}$ are not preserved after the splitting. More precisely, we introduce two auxiliary unknowns $\bb{v}$ and $\tau$ representing the traces of the fluid velocity and the fluid temperature on the interface, respectively:
\begin{eqnarray*}
\bb{v}:=\gamma_{|\Gamma^w}\bb{u}   \quad \tau:=\gamma_{|\Gamma^w}\vartheta.
\end{eqnarray*}
Note that the kinematic coupling conditions are not satisfied on the level of approximate solutions, i.e. in general $\bb{v}\neq \partial_t w\bb{n}$ and $\tau\neq\vartheta$. However, penalty terms will be included in the decoupled equations, which will ensure that kinematic coupling conditions are satisfied in the limit $\Delta t \to 0.$
The fluid and the structure sub-problems are solved one at the time through a time-marching scheme as it is represented on the figure $\ref{timemarching}$.

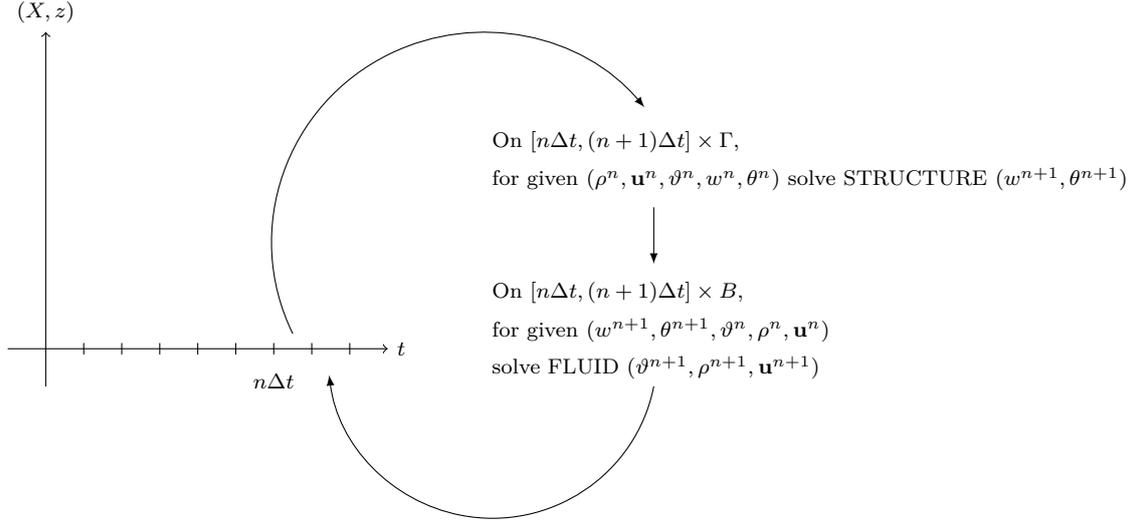
\begin{figure}[!h]
\begin{center}
\begin{tikzpicture}[domain=0:3]

\draw[->] (-0.5,0) -- (4.5,0) node[right] {{\footnotesize $t$}};
\draw[->] (0,-0.5) -- (0,4.2) node[above] {{\footnotesize$ (X,z)$}};
\node (v1) at (0.5,0.2) {};
\node (v2) at (0.5,-0.2) {};
\draw (v1) edge (v2);

\node (v4) at (1,0.2) {};
\node (v3) at (1,-0.2) {};
\node (v5) at (1.5,0.2) {};
\node (v6) at (1.5,-0.2) {};
\node (v8) at (2,0.2) {};
\node (v7) at (2,-0.2) {};
\node (v10) at (2.5,0.2) {};
\node (v9) at (2.5,-0.2) {};
\node (v12) at (3,0.2) {};
\node (v11) at (3,-0.2) {};
\node (v14) at (3.5,0.2) {};
\node (v13) at (3.5,-0.2) {};
\node (v16) at (4,0.2) {};
\node (v15) at (4,-0.2) {};
\draw (v3) edge (v4);
\draw (v5) edge (v6);
\draw (v7) edge (v8);
\draw (v9) edge (v10);
\draw (v11) edge (v12);
\draw (v13) edge (v14);
\draw (v15) edge (v16);

\node [anchor=north] (v20) at (3,-0.2) {{\footnotesize $n \Delta t$}};

\node [anchor=west] at (5.75,2.75) {{\footnotesize On $[n\Delta t, (n+1)\Delta t]\times \Gamma$,}};  
\node [anchor=west] at (5.75,2.25) {{\footnotesize  for given  ($\rho ^n, \bb{u}^{n},\vartheta^n,{w}^{n},\theta^n)$  solve STRUCTURE $({w}^{n+1}, \theta^{n+1})$}};
\draw[-latex] (3.25,0.2) arc (-154.0988:-320:2.7792);

\node (v17) at (8,2) {};
\node  [anchor=west] at (5.75,0.75) {\footnotesize On $[n\Delta t, (n+1)\Delta t]\times B$,};

\node [anchor=west] (v19) at (5.75,0.25) {{\footnotesize for given (${w}^{n+1}, \theta^{n+1}, \vartheta^{n}, \rho^n, \mathbf{u}^{n})$ }};
\node [anchor=west] (v20) at (5.75,-0.25) {{\footnotesize solve
FLUID $(\vartheta^{n+1}, \rho^{n+1}, \bb{u}^{n+1})$}};
\node (v18) at (8,1) {};
\draw[-latex] (v17) edge (v18);
\draw[-latex] (8,-0.5) arc (-11.0789:-173:2.1636);
\end{tikzpicture}
\caption{The diagram of the solving procedure. Here, the STRUCTURE and FLUID solvers corespond to $(SSP)$ and $(FSP)$ systems given in the next section.}
\label{timemarching}
\end{center}
\end{figure}

For $1 \leq n \leq N-1$, the sub-problems consist of the following equations, corresponding to the equations of the weak formulation of the extended problem  in the sense of Definition $\ref{weaksolutionap1}$:\\

\noindent
\textbf{The structure sub-problem} on $[n\Delta t, (n+1)\Delta t] \times \Gamma$: 
\begin{enumerate}
\item The structure part of the coupled momentum equation $\eqref{coupledmomeq1}$;
\item The structure part of the coupled entropy equation $\eqref{coupledenteq1}$;
\item The structure part of the energy inequality. $\eqref{ExtenededEiIn}$
\end{enumerate}

\noindent
\textbf{The fluid sub-problem} on $[n\Delta t, (n+1)\Delta t]\times B$: 
\begin{enumerate}
\item The renormalized continuity equation $\eqref{RCE1}$;
\item The fluid part of the coupled momentum equation $\eqref{coupledmomeq1}$;
\item The fluid part of the entropy inequality $\eqref{coupledenteq1}$;
\item The fluid part of the energy inequality $\eqref{ExtenededEiIn}$.\\
\end{enumerate}

\noindent
We now go on to define the sub-problems precisely.

\subsection{The sub-problems}
Denote the translation in time by $-\Delta t$ as
\begin{eqnarray*}
    T_{\Delta t}f(t):=\begin{cases}
    f(t-\Delta t),& \quad t\in [(n-1)\Delta t, n\Delta t], n\geq 1, \\
    f(0),& \quad t\in [0, \Delta t].
    \end{cases}
\end{eqnarray*}

${}$\\

We now introduce the approximation scheme: \\

\noindent
\underline{\textbf{The structure sub-problem (SSP)}:} \\
By induction on $n\geq 0$, assume that:
\begin{itemize}
    \item[] \textbf{Case} $n=0$: $w^0(0,\cdot):=w_0$,~ $\partial_t w^0(0,\cdot): = v_0$,~ $\theta^0(0,\cdot) := \theta_0$ and
    \begin{eqnarray*}
        \bb{v}^0(t,\cdot): = v_0\bb{n},\quad  \tau^0(t,\cdot):= \theta_0, \quad \text{for }t\in[-\Delta t,0];
    \end{eqnarray*}
    \item[] \textbf{Case} $n\geq 1$: the solution $(w^n,\theta^n)$ of $(SSP)$ and the solution $(\vartheta^n,\rho^n,\bb{u}^n)$ of $(FSP)$ (defined below) are already obtained.
\end{itemize}

Find $(w^{n+1},\theta^{n+1})$ so that:
\begin{enumerate}
    \item $w^{n+1} \in W^{1,\infty}(n \Delta t, (n+1)\Delta t; L^2(\Gamma))\cap  L^\infty(n \Delta t, (n+1)\Delta t;H^2(\Gamma))$,\\ $\alpha_1 w^{n+1}\in H^1(n \Delta t, (n+1)\Delta t;H^1(\Gamma))$, $\alpha_2 w^{n+1}\in W^{1,\infty}(n \Delta t, (n+1)\Delta t;H^1(\Gamma))$,\\
    $\theta^{n+1} \in L^\infty(n \Delta t, (n+1)\Delta t; L^2(\Gamma))\cap L^2(n \Delta t, (n+1)\Delta t; H^1(\Gamma))$.
    \item $w^{n+1}(n \Delta t, \cdot) =w^{n}(n \Delta t,  \cdot), ~~ \partial_t w^{n+1}(n \Delta t, \cdot) = \partial_t w^{n}(n \Delta t, \cdot)$ and $  \theta^{n+1}(n \Delta t, \cdot) = \theta^{n}(n \Delta t, \cdot)$ in the weakly continuous sense in time.
    \item The following structure heat equation
    \begin{eqnarray}
        &&(1-\delta)\bint_{n\Delta t}^{(n+1)\Delta t}\bint_{\Gamma}  \theta^{n+1} \partial_t\tilde{\psi}-\delta\bint_{n\Delta t}^{(n+1)\Delta t}\bint_{\Gamma} \frac{ \theta^{n+1} - T_{\Delta t} \tau^{n}}{\Delta t}\tilde{\psi} \nonumber \\[2mm]
        &&\quad - \bint_{n\Delta t}^{(n+1)\Delta t}\bint_{\Gamma}\nabla \theta^{n+1}\cdot \nabla \tilde{\psi} -\bint_{n\Delta t}^{(n+1)\Delta t}\bint_{\Gamma}\nabla \partial_t w^{n+1}\cdot \nabla \tilde{\psi} = \bint_{n\Delta t}^{(n+1)\Delta t}\frac{d}{dt}\int_\Gamma \theta^{n+1}\tilde{\psi} \nonumber \\&& \label{heateqSSP}
    \end{eqnarray}
holds for all $\tilde{\psi}\in C^\infty([n\Delta t,(n+1)\Delta t]\times \Gamma)$.
\item The following plate equation
\begin{eqnarray}
&&(1-\delta)\bint_{n\Delta t}^{(n+1)\Delta t}\bint_{\Gamma}\partial_t w^{n+1} \partial_t \psi-\delta\bint_{n\Delta t}^{(n+1)\Delta t}\bint_{\Gamma}\ddfrac{\partial_t w^{n+1} - T_{\Delta t} \bb{v}^{n}\cdot \bb{n}}{\Delta t}\psi\nonumber \\ && -\bint_{n\Delta t}^{(n+1)\Delta t}\bint_{\Gamma}\Delta w^{n+1} \Delta \psi-\alpha_1\bint_{n\Delta t}^{(n+1)\Delta t}\int \nabla \partial_t w^{n+1} \cdot \nabla \psi\nonumber \\ &&
+ \alpha_2 \bint_{n\Delta t}^{(n+1)\Delta t}\bint_\Gamma \nabla \partial_t w^{n+1}\cdot \partial_t\nabla\psi  +\bint_{n\Delta t}^{(n+1)\Delta t}\bint_{\Gamma}\nabla \theta^{n+1} \cdot \nabla \psi\nonumber \\ &&=(1-\delta)\bint_{n\Delta t}^{(n+1)\Delta t}\frac{d}{dt}\int_\Gamma \partial_t w^{n+1}\psi + \alpha_2\bint_{n\Delta t}^{(n+1)\Delta t}\frac{d}{dt}\int_\Gamma \partial_t \nabla  w^{n+1}\cdot \nabla \psi \label{plateeqSSP}
\end{eqnarray}
holds for all ${\psi}\in C^\infty([n\Delta t,(n+1)\Delta t]\times \Gamma)$.
\item The following energy inequality
\begin{eqnarray}
&&\frac{\delta}{2\Delta t}\int_{n \Delta t}^{t}\big(||\partial_t
w^{n+1}-T_{\Delta t}\bb{v}^{n}\cdot \bb{n}||_{L^2(\Gamma)}^2 +||\partial_t
w^{n+1} ||_{L^2(\Gamma)}^2\big)+\frac{1-\delta}{2}|| \partial_t w^{n+1}(t)||_{L^2(\Gamma)}^2\nonumber \\
&&\quad+\frac{\delta}{2\Delta t}\int_{n \Delta t}^{t}\big(||\theta^{n+1}-T_{\Delta t}\tau^{n+1}||_{L^2(\Gamma)}^2 +||\theta^{n+1} ||_{L^2(\Gamma)}^2\big)+\frac{1-\delta}{2}|| \theta^{n+1}(t)||_{L^2(\Gamma)}^2 \nonumber\\
&&\quad +\frac{1}{2}|| \Delta w^{n+1}(t)||_{L^2(\Gamma)}^2 +\frac{\alpha_2}{2}||\nabla\partial_t w^{n+1}(t)||_{L^2(\Gamma)}^2+ \int_{n\Delta t}^t \int_{\Gamma}\big(\alpha_1|\nabla \partial_t w^{n+1}|^2+ | \nabla \theta^{n+1}|^2 \big) \nonumber \\
&&\leq  \frac{1-\delta}{2}|| \partial_t w^{n+1}(n\Delta t)||_{L^2(\Gamma)}^2+\frac{1}{2}|| \Delta w^{n+1}(n\Delta t)||_{L^2(\Gamma)}^2 +\frac{\alpha_2}{2}||\nabla\partial_t w(n\Delta t)||_{L^2(\Gamma)}^2\nonumber \\
&&\quad + \frac{1-\delta}{2}||\theta^{n+1}(n\Delta  t)||_{L^2(\Gamma)}^2+\frac{\delta}{2\Delta t}\int_{n \Delta t}^{t}||T_{\Delta t}\bb{v}^{n+1}||_{L^2(\Gamma)}^2+\frac{\delta}{2\Delta t}\int_{n \Delta t}^{t}||T_{\Delta t}\tau^{n+1}||_{L^2(\Gamma)}^2.\nonumber\\
&& \label{SSPeneq}
\end{eqnarray}
holds for all $t\in (n\Delta t,(n+1)\Delta t]$.
\end{enumerate}
\underline{\textbf{The fluid sub-problem (FSP)}:} \\
By induction on $n\geq 0$, assume that:
\begin{itemize}
    \item[] \textbf{Case} $n=0$: $\rho^0(0,\cdot):=\rho_{0,\delta}(\cdot)$,~ $(\rho\bb{u})^0(0,\cdot): = (\rho\bb{u})_{0,\delta}(\cdot)$,~ $(\rho s(\rho,\vartheta))^0(0,\cdot) := (\rho_{0,\delta} s(\rho_{0,\delta},\vartheta_{0,\delta}))$;
    \item[] \textbf{Case} $n\geq 1$: the solution $(\vartheta^n,\rho^n,\bb{u}^n)$ of $(FSP)$ and the solution $(w^{n+1},\theta^{n+1})$ of $(SSP)$ are already obtained.
\end{itemize}

Find $(\vartheta^{n+1}, \rho^{n+1}, \bb{u}^{n+1})$ so that:

\begin{enumerate}
\item $\rho ^{n+1} \geq 0$, $\rho^{n+1} \in L^\infty(n\Delta t, (n+1)\Delta t;L^{\frac{5}{3}}(\mathbb{R}^3))\cap L_{loc}^q([n\Delta t, (n+1)\Delta t]; \overline{B}\setminus \Gamma^w(t)), \text{ for some }q>\frac{5}{3}$,\\
$\rho^{n+1}\in L^\infty(n\Delta t, (n+1)\Delta t;L^{\beta}(\mathbb{R}^3))$, \\
$\bb{u}^{n+1},\ \nabla \bb{u}^{n+1} \in L^2((n\Delta t, (n+1)\Delta t) \times B)$,~~ $(\rho \bb{u})^{n+1} \in L^\infty(n\Delta t, (n+1)\Delta t; L^1(\mathbb{R}^3))$,\\
$\vartheta^{n+1}>0$ a.e. in $(n\Delta t, (n+1)\Delta t) \times B$,~~ $\vartheta^{n+1} \in L^\infty(n\Delta t, (n+1)\Delta t; L^4(B))$,\\
$\vartheta^{n+1},\ \nabla \vartheta^{n+1} \in L^2((n\Delta t, (n+1)\Delta t) \times B)$, ~~$\log \vartheta^{n+1},\ \log \nabla \vartheta^{n+1} \in L^2((n\Delta t, (n+1)\Delta t) \times B)$,\\
$(\rho s)^{n+1}, (\rho s \bb{u})^{n+1},~~ \frac{q^{n+1}}{\vartheta^{n+1}} \in L^1((n\Delta t, (n+1)\Delta t) \times B)$.
\item $\rho^{n+1}(n\Delta t)= \rho^{n}(n\Delta t)$, $(\rho \bb{u})^{n+1}(n\Delta t)=(\rho \bb{u})^{n}(n\Delta t)$ in weakly continuous sense in time.
\item The renormalized continuity equation
\begin{align*}
&\int_{n\Delta t}^{(n+1)\Delta t} \bint_B \rho^{n+1} B(\rho^{n+1})( \partial_t \varphi +\bb{u}^{n+1}\cdot \nabla \varphi) \nonumber \\
&= \int_{n\Delta t}^{(n+1)\Delta t} \bint_B b(\rho^{n+1})(\nabla\cdot \bb{u}^{n+1}) \varphi - \int_{n\Delta t}^{(n+1)\Delta t}\frac{d}{dt}\int_{B} \rho^{n+1} B(\rho^{n+1}) \varphi , 
\end{align*}
holds for all $\varphi \in C^\infty([n\Delta t,(n+1)\Delta t]\times \mathbb{R}^3)$ and any $b\in L^{\infty}(0,\infty) \cap C[0,\infty)$ such that $b(0)=0$ with $B(x)=B(1)+\int_1^x \frac{b(z)}{z^2}dz$
\item The following momentum equation
\begin{align}
    &\int_{n\Delta t}^{(n+1)\Delta t} \bint_B \rho^{n+1} \bb{u}^{n+1} \cdot\partial_t \boldsymbol\varphi + \int_{n\Delta t}^{(n+1)\Delta t} \bint_B (\rho^{n+1} \bb{u}^{n+1} \otimes \bb{u}^{n+1}):\nabla\boldsymbol\varphi\nonumber\\
    &  +\int_{n\Delta t}^{(n+1)\Delta t} \bint_B  p_{\eta,\delta}^{n+1}(\vartheta^{n+1},\rho^{n+1}) (\nabla \cdot \boldsymbol\varphi)-  \int_{n\Delta t}^{(n+1)\Delta t} \bint_B   \mathbb{S}_\omega^{n+1}(\vartheta^{n+1}, \nabla\bb{u}^{n+1}): \nabla \boldsymbol\varphi\nonumber\\
    & -\delta
    \int_{n\Delta t}^{(n+1)\Delta t}  \bint_\Gamma\ddfrac{\bb{v}^{n+1}-\partial_t w^{n+1}\bb{n}}{\Delta t} \cdot\boldsymbol\psi   = \int_{n\Delta t}^{(n+1)\Delta t}  \frac{d}{dt} \int_{B}\rho^{n+1}\mathbf{u}^{n+1}\cdot\boldsymbol\varphi.\label{momeqFSP}
\end{align}
holds for all $\boldsymbol\varphi \in C_c^\infty([n\Delta t,(n+1)\Delta t]\times B)$ and $\boldsymbol\psi\in C^\infty(\Gamma_T)$ such that $\gamma_{|\Gamma^{w^{n+1}}}\boldsymbol\varphi = \boldsymbol\psi$ on $\Gamma_T$, where
\begin{eqnarray*}
    &&\bb{v}^{n+1}=\gamma_{|\Gamma^{w^{n+1}}}\bb{u}^{n+1}  ,\\[2mm]
    &&\mathbb{S}_\omega^{n+1}(\vartheta, \nabla \bb{u}):=\mu_\omega^{n+1}(\vartheta) \big( \nabla \bb{u} + \nabla^\tau \bb{u}-\frac{2}{3} \nabla \cdot \bb{u} \big) + \zeta_\omega^{n+1}(\vartheta) \nabla \cdot \bb{u},\\[2mm]
    &&\mu_\omega^{n+1}:=\mu_\omega^{w^{n+1}}, \quad \zeta_\omega^{n+1}:=\zeta_\omega^{w^{n+1}}, \quad p_{\eta,\delta}^{n+1}:=p_{\eta,\delta}^{w^{n+1}},
\end{eqnarray*}
with, $\mu_\omega^{w}$, $\zeta_\omega^{w}$ and $p_{\eta,\delta}^{w}$ being defined in Section $\ref{penalization}$.
\item The following entropy balance 
\begin{eqnarray}
&&\int_{n \Delta t}^{(n+1)\Delta t} \int_B \rho^{n+1} s_\eta^{n+1}( \partial_t \varphi + \bb{u}^{n+1}\cdot \nabla \varphi) - \int_{n \Delta t}^{(n+1)\Delta t} \int_B\frac{\kappa_{\nu}^{n+1}(\vartheta^{n+1}) \nabla \vartheta^{n+1} \cdot \nabla \varphi}{\vartheta^{n+1}} \nonumber\\
&&+ \langle \sigma_{\omega,\nu}^{n+1}; \varphi \rangle_{[\mathcal{M},C]([n\Delta t,(n+1)\Delta t]\times \overline{B})} +\lambda \int_{n \Delta t}^{(n+1)\Delta t} \int_B  \vartheta^4  \varphi\nonumber\\
&&-\delta\bint_{n \Delta t}^{t} \int_\Gamma \frac{\tau^{n+1} - \theta^{n+1}}{\Delta t}\tilde{\psi} = \Big( \int_{B} \rho^{n+1} s_\eta^{n+1} \varphi\Big)\Big|_{n\Delta t}^{(n+1)\Delta t} \label{entineqFSP}
\end{eqnarray}
holds for any $\varphi \in C^\infty([n\Delta t,(n+1)\Delta t]\times B)$ and $\tilde{\psi}\in C^\infty([n\Delta t,(n+1)\Delta t]\times\Gamma)$ such that $\gamma_{|\Gamma^{w^{n+1}}}\varphi = \psi$, where
\begin{eqnarray*}
  \sigma_{\omega,\nu}^{n+1}\geq \frac{1}{\vartheta^{n+1}}\Big( \mathbb{S}_\omega(\vartheta^{n+1}, \nabla \bb{u}^{n+1}):\nabla \bb{u}^{n+1}
+\frac{\kappa_{\nu}^{n+1}(\vartheta^{n+1})|\nabla \vartheta^{n+1}|^2}{\vartheta^{n+1}}\Big) 
\end{eqnarray*}
and
\begin{eqnarray*}
    \tau^{n+1}=\gamma_{|\Gamma^{w^{n+1}}}\vartheta^{n+1}, \quad s_\eta^{n+1}:=s_\eta^{w^{n+1}}, \quad \kappa_\nu^{n+1}:=\kappa_{\nu}^{w^{n+1}},
\end{eqnarray*}
with $s_\eta^{w}$ and $\kappa_{\nu}^{w}$ being defined in Section $\ref{penalization}$.
\item The following energy inequality
\begin{eqnarray}
    &&\displaystyle{\int_{B} \Big( \frac{1}{2} \rho^{n+1} |\bb{u}^{n+1}|^2 + \rho^{n+1} e_\eta^{n+1}(\vartheta^{n+1},\rho^{n+1})+\frac{\delta}{\beta-1}(\rho^{n+1})^{\beta}\Big)(t)} +\displaystyle{\lambda \int_{n \Delta t}^{t}\int_{B} (\vartheta^{n+1})^5 } \nonumber \\
    &&+\displaystyle{\frac{\delta}{2\Delta t}  \int_{n \Delta t}^{t}\int_{\Gamma} \Big(|\bb{v}^{n+1} - \partial_t w^{n+1}\bb{n}|^2 + |\bb{v}^{n+1}|^2 \Big)}+\displaystyle{\frac{\delta}{2\Delta t}  \int_{n \Delta t}^{t}\int_{\Gamma} \Big(|\tau^{n+1} - \theta^{n+1}|^2 + |\tau^{n+1}|^2 \Big)}\nonumber \\
    &&\leq \displaystyle{\int_{B} \Big( \frac{1}{2} \rho^n|\bb{u}^n|^2 + \rho^n e_\eta^n(\vartheta^n,\rho^n)  \Big)(n\Delta t) }+\frac{\delta}{2\Delta t}\int_{n \Delta t}^{t}\int_{\Gamma} \Big( | \partial_t w^{n+1}|^2+ | \theta^{n+1}|^2\Big), \label{fspen}
\end{eqnarray}
holds for any $t \in [n\Delta t, (n+1)\Delta t]$, where $e_\eta^{n+1}:=e_\eta^{w^{n+1}}$, with $e_\eta^{w}$ being defined in Section $\ref{penalization}$.
\item The total dissipation inequality
\begin{eqnarray}
    &&\displaystyle{\int_{B} \Big( \frac{1}{2} \rho^{n+1} |\bb{u}^{n+1}|^2+H_{1,\eta}^{n+1}(\rho^{n+1},\vartheta^{n+1})-(\rho^{n+1}-\overline{\rho})\frac{\partial H_{1,\eta}^{n+1}(\overline{\rho},1)}{\partial \rho}-H_{1,\eta}^{n+1}(\overline{\rho},1)+\frac{\delta}{\beta-1}(\rho^{n+1})^{\beta} \Big)(t) }\nonumber \\
    &&+   \lambda\int_{n\Delta t}^t \int_{B}  (\vt ^{n+1})^5+ \int_{n\Delta t}^t \int_{B} \frac{1}{\vartheta^{n+1}}\Big( \mathbb{S}_{\omega}^{n+1}(\vartheta^{n+1},\nabla \bb{u}^{n+1}):\nabla \bb{u}^{n+1} + \frac{\kappa_{\nu}^{n+1}(\vartheta^{n+1})|\nabla \vartheta^{n+1}|^2}{\vartheta^{n+1}}\Big)\nonumber  \\ 
    && +\frac{\delta}{2\Delta t}  \int_{n \Delta t}^{t}\int_{\Gamma} \Big( |\bb{v}^{n+1} - \partial_t w^{n+1}\bb{n}|^2 + |v^{n+1}|^2 \Big) + \displaystyle{\frac{\delta}{2\Delta t}  \int_{n \Delta t}^{t}\int_{\Gamma} \Big(|\tau^{n+1} - \theta^{n+1}|^2 + |\tau^{n+1}|^2 \Big)}\nonumber \\\nonumber \\
    &&\leq \displaystyle{\int_{B} \Big( \frac{1}{2} \rho^n|\bb{u}^n|^2 +H_{1,\eta}^{n}(\rho^n,\vartheta^n) -(\rho^{n}-\overline{\rho})\frac{\partial H_{1,\eta}^{n}(\overline{\rho},1)}{\partial \rho}-H_{1,\eta}^{n}(\overline{\rho},1)+\rho^n e(\vartheta^n,\rho^n) \Big)(n\Delta t) }\nonumber\\
    &&+\frac{\delta}{2\Delta t}\int_{n \Delta t}^{t} \int_{\Gamma} | \partial_t w^{n+1}|^2 -\delta\int_{n\Delta t}^t\int_\Gamma \frac{\tau^{n+1}-\theta^{n+1}}{\Delta t} , \label{dissineq}
\end{eqnarray}
holds for any $t \in [n\Delta t, (n+1)\Delta t]$, where the approximate Helmholtz function is defined as
\begin{eqnarray*}
H_{\overline{\vartheta},\eta}^{n+1}(\rho,\vartheta):=\rho(e_\eta^{n+1}(\rho,\vartheta)-\overline{\vartheta}s_\eta^{n+1}(\rho,\vartheta))\mbox{ for some positive constant } \overline{\vartheta}>0.
\end{eqnarray*}
and $\overline{\rho}$ is chosen so that $\int_{B} \rho - \overline{\rho}=0$ for a.a. $t\in(0,T)$.
\end{enumerate}

\subsection{Solving the sub-problems}
Here,  we give a brief explanation how the each of the sub-problems is solved and how the estimates they satisfy are obtained.\\

First, in order to solve $(SSP)$, one can span $w^{n+1},\theta^{n+1}$ in finite Galerkin bases, then solve the problem by the standard ODE theory (see for example \cite[Lemma 3.1]{trwa3}), while the inequality $\eqref{SSPeneq}$ (in finite bases it is actually an equality) is obtained by choosing $\tilde{\psi}=\theta^{n+1}$ in $\eqref{heateqSSP}$ and $\psi=\partial_t w^{n+1}$ in $\eqref{plateeqSSP}$ and summing up these two identities, where   the identity $2(a-b)b = a^2 - b^2 - (a-b)^2$ is also used. Then, it is a routine matter to pass to the limit in the number of basis functions to prove the desired result.

Next, to solve $(FSP)$, notice that this system is almost the same as in \cite[Definition 3.1]{heatfl}, where $\varepsilon$ is replaced by $\frac{\Delta t}{\delta}$ and there is an additional penalization term in the entropy equation $\eqref{entineqFSP}$
\begin{eqnarray}\label{thatterm}
 \delta\int_{n \Delta t}^{(n+1)\Delta t} \int_\Gamma \frac{\tau^{n+1} - \theta^{n+1}}{\Delta t}\psi.
\end{eqnarray}
To obtain a solution of $(FSP)$, one can use \cite[Theorem 3.1]{heatfl} (which is proved as \cite[Theorem 3.1]{feireislnovotny}), where the above penalization term $\eqref{thatterm}$ can be dealt with as follows. At the highest level of approximation (see \cite[Section 3.4]{feireislnovotny}), where fluid velocity is spanned in a finite Galerkin basis and the continuity equation is damped, the entropy inequality is replaced by the internal energy equation \cite[(3.55)]{feireislnovotny} with the modified penalization term defined on entire $B$ as
\begin{eqnarray}\label{thatterm2}
    \delta \varphi_\xi\vartheta^{n+1}\frac{\vartheta^{n+1} - \theta_\xi^{n+1}}{\Delta t},\qquad \xi>0.
\end{eqnarray}
Here, $\varphi_\xi\in C_0^\infty([0,T]\times B)$ is a non-negative function such that $\varphi_\xi\to \delta_{\Gamma^{w^{n+1}}}$ as $\xi\to 0$,
\begin{eqnarray}
    \theta_\xi^{n+1}:=\int_0^T \int_B \iota_\xi(t-\tau,X-Y)(\theta^{n+1}\circ \Phi_{w^{n+1}}^{-1})\delta_{\Gamma^{w^{n+1}}} ~d\tau dY
\end{eqnarray}
where $\iota$ is a standard time-space mollifying kernel, and $\Phi_{w^{n+1}}$ is the boundary flow function defined in $\eqref{ab}$. Note that $\theta_\xi^{n+1}\in C^\infty([0,T]\times B)$ and for $\xi$ small enough vanishes on $\partial B$. Moreover, the term given in $\eqref{thatterm2}$ is regular enough and compatible with the comparison principle (see \cite[Lemma 3.2]{feireislnovotny}), which ensures the positivity of the approximate fluid temperature at this approximation level. Now, we can solve this penalized internal energy equation by treating the term $\eqref{thatterm2}$ as a compact perturbation, while the continuity equation and penalized momentum equation can be solved in the same way, so the approximate solution of the entire penalized fluid system follows by the fixed-point argument. The internal energy equation is then divided by $\vartheta^{n+1}$, and we can pass the Galerkin limit together with $\xi\to 0$, by which the penalization term $\eqref{thatterm2}$ becomes $\eqref{thatterm}$. Afterwards, in the limiting system, the vanishing density damping limit is done and we obtain our solution of $(FSP)$.\\

Next, in order to obtain the solutions of $(SSP)$ and $(FSP)$ inductively on the whole time interval $[0,T]$, it is enough to prove the uniform estimates on the time interval $[0,n\Delta t]$. This will be the subject in the following section.
\subsection{The uniform bounds of the approximate solutions}
Assume that, inductively, we have solved the structure sub-problem and the fluid sub-problem on $[0,m\Delta t]$, for some $1\leq m \leq N-1$. Let us denote, for simplicity,
\begin{eqnarray*}
    g(t):= g^{n+1}(t), \quad \text{for } t\in [n\Delta t, (n+1)\Delta t],
\end{eqnarray*}
where $g$ is one of the functions $\vartheta, \rho, \bb{u},w, \theta$, and
\begin{eqnarray*}
    \mathbb{S}_\omega^{w} := \mathbb{S}_\omega^{n+1}, \quad \mu_\omega^{w}:=\mu_\omega^{n+1}, \quad \zeta_\omega^{w}:=\zeta_\omega^{n+1},\quad \sigma_{\omega,\nu}^w:=\sigma_{\omega,\nu}^{n+1}, \quad p_{\eta,\delta}^{w}:=p_{\eta,\delta}^{n+1},\quad H_{1,\eta}^{n+1}:=H_{1,\eta}^{w}, 
\end{eqnarray*}
for $t\in [n\Delta t, (n+1)\Delta t]$, $0\leq n\leq N-1$.

Now, for $n\leq m-2$, sum $\eqref{SSPeneq}$ and $\eqref{fspen}$ at times $(n+1)\Delta t$, sum over $n=1,...,m-2$, then add $\eqref{SSPeneq}$ and $\eqref{fspen}$ at time $t \in [(m-1)\Delta t, m \Delta t]$, and by telescoping, one obtains
\begin{eqnarray}
&&\displaystyle{\int_{B} \Big( \frac{1}{2} \rho |\bb{u}|^2 + \rho e_\eta^w(\rho,\vartheta)+\frac{\delta}{\beta-1}\rho^{\beta} \Big)(t)} + \lambda\int_{0}^t  \int_{B} \vartheta^5 +\frac{1-\delta}{2}|| \partial_t w(t)||_{L^2(\Gamma)}^2 +\frac{1}{2}|| \Delta w(t)||_{L^2(\Gamma)}^2 \nonumber \\
&& \quad+\frac{\alpha_2}{2}||\nabla \partial_t w(t)||_{L^2(\Gamma)}^2 + \frac{1-\delta}{2}||\theta(t)||_{L^2(\Gamma)}^2+ \int_{0}^t \int_{\Gamma}\big(\alpha_1 |\nabla \partial_t w|^2+ | \nabla \theta|^2 \big) + \frac{\delta}{2\Delta t}\int_{n\Delta t}^t ||T_{\Delta t} v||_{L^2(\Gamma)}^2\nonumber \\
&&\quad+\displaystyle{\frac{\delta}{2\Delta t}  \int_{0}^{t} \Big(||\bb{v} - \partial_t w\bb{n}||_{L^2(\Gamma)}^2 +||\partial_t w-T_{\Delta t}\bb{v}\cdot \bb{n}||_{L^2(\Gamma)}^2 \Big)}  + \frac{\delta}{2\Delta t}\int_{n\Delta t}^t ||T_{\Delta t} \tau||_{L^2(\Gamma)}^2\nonumber \\
&&
\quad+\displaystyle{\frac{\delta}{2\Delta t}  \int_{0}^{t} \Big(||\tau - \theta||_{L^2(\Gamma)}^2 +||\theta-T_{\Delta t}\tau||_{L^2(\Gamma)}^2 \Big)}\nonumber \\
&&  \leq \displaystyle{\int_{B} \Big( \frac{1}{2 \rho_{0,\delta }} |(\rho\bb{u})_{0,\delta }|^2 + \rho_{0,\delta } e_\eta^w(\vartheta_{0,\delta },\rho_{0,\delta })}+\frac{\delta}{\beta-1}\rho_{0,\delta }^{\beta} \Big) \nonumber\\
&&\quad+ \frac{1}{2}|| v_{0,\delta }||_{L^2(\Gamma)}^2+\frac{1}{2}|| \Delta w_{0,\delta }||_{L^2(\Gamma)}^2 + \frac{\alpha_2}{2}||\nabla v_{0,\delta}||^2+ \frac{1}{2}||\theta_{0,\delta }||_{L^2(\Gamma)}^2, \label{decenineqtotal}
\end{eqnarray}
and similarly for the total dissipation inequality $\eqref{dissineq}$
\begin{eqnarray}
    &&\displaystyle{\int_{B} \Big( \frac{1}{2} \rho |\bb{u}|^2 +
    H_{1,\eta}^w(\rho,\vartheta)-(\rho-\overline{\rho})\frac{\partial H_{1,\eta}^w(\overline{\rho},1)}{\partial \rho}-H_{1,\eta}^w(\overline{\rho},1)+ \rho e_\eta^w(\rho,\vartheta)+\frac{\delta}{\beta-1}\rho^{\beta} \Big)(t)} \nonumber \\
    &&\quad+\lambda \int_{0}^t  \int_{B} \vartheta^5+\int_{0}^t \int_{B} \frac{1}{\vartheta}\Big( \mathbb{S}_{\omega}^w(\vartheta,\nabla \bb{u}):\nabla \bb{u} + \frac{\kappa_{\nu}^w(\vartheta)|\nabla     \vartheta|^2}{\vartheta}\Big) \nonumber+\frac{1-\delta}{2}|| \partial_t w(t)||_{L^2(\Gamma)}^2 +\frac{1}{2}|| \Delta w(t)||_{L^2(\Gamma)}^2 \\
    &&\quad +\frac{\alpha_2}{2}||\nabla \partial_t w(t)||_{L^2(\Gamma)}^2 + \frac{1-\delta}{2}||\theta(t)||_{L^2(\Gamma)}^2+ \int_{0}^t \int_{\Gamma}\big[\alpha_1 |\nabla \partial_t w|^2+ | \nabla \theta|^2\big] \nonumber \\
    &&\quad+ \frac{\delta}{2\Delta t}\int_{n\Delta t}^t ||T_{\Delta t} v||_{L^2(\Gamma)}^2+\displaystyle{\frac{\delta}{2\Delta t}  \int_{0}^{t} \Big(||v - \partial_t w||_{L^2(\Gamma)}^2 +||\partial_t w-T_{\Delta t}\bb{v}\cdot \bb{n}||_{L^2(\Gamma)}^2 \Big)}  \nonumber\\
    &&\quad+ \frac{\delta}{2\Delta t}\int_{n\Delta t}^t ||T_{\Delta t} \tau||_{L^2(\Gamma)}^2+\displaystyle{\frac{\delta}{2\Delta t}  \int_{0}^{t} \Big(||\tau - \theta||_{L^2(\Gamma)}^2 +||\theta-T_{\Delta t}\tau||_{L^2(\Gamma)}^2 \Big)}  \nonumber\\
    &&\leq \displaystyle{\int_{B} \Big( \frac{1}{2 \rho_{0,\delta }} |(\rho\bb{u})_{0,\delta }|^2 +H_{1,\eta}^w( \rho_{0,\delta }, \vartheta_{0,\delta})-( \rho_{0,\delta }-\overline{\rho})\frac{\partial H_{1,\eta}^w(\overline{\rho},1)}{\partial \rho}-H_{1,\eta}^w(\overline{\rho},1)+\frac{\delta}{\beta-1}\rho^{\beta}_{0,\delta}+ \rho_{0,\delta } e_\eta^w(\vartheta_{0,\delta },\rho_{0,\delta })}\Big) \nonumber \\
    &&+ \frac{1}{4}|| v_{0,\delta }||_{L^2(\Gamma)}^2+\frac{1}{2}|| \Delta w_{0,\delta }||_{L^2(\Gamma)}^2 + \frac{1}{2}||\theta_{0,\delta }||_{L^2(\Gamma)}^2 +(1-\delta)\Big(\int_{\Gamma} \theta \Big)\Big|_{0}^{t}+\delta \int_0^t\int_\Gamma  \ddfrac{\tau-T_{\Delta t}\tau}{\Delta t}. \label{decdissineqtotal}
\end{eqnarray}
Note that the last two terms can be controlled as
\begin{eqnarray*}
(1-\delta)\Big(\int_{\Gamma} \theta \Big)\Big|_{0}^{t} \leq \frac{1-\delta}{4}||\theta(t)||_{ L^2(\Gamma)}^2 + 2(1-\delta) || \theta_{0,\delta}||_{L^2(\Gamma)}^2+C(\Gamma),
\end{eqnarray*}
and
\begin{eqnarray*}
&&\delta \int_0^t\int_\Gamma  \ddfrac{\tau-T_{\Delta t}\tau}{\Delta t} =- \delta \int_0^{\Delta t} \int_\Gamma \frac{\theta_{0,\delta}}{\Delta t} + \delta \int_{t-\Delta t}^{t} \int_\Gamma \frac{\tau}{\Delta t} \leq \delta||\theta_{0,\delta}||_{L^2(\Gamma)}^2 + \frac{\delta}{4\Delta t} \int_{t-\Delta t}^t||\tau||_{L^2(\Omega)}^2+ \frac{\delta}{\Delta t}\int_{t-\Delta t}^t 1 \\
&&\leq \delta||\theta_{0,\delta}||_{L^2(\Gamma)}^2+ \frac{\delta}{4\Delta t} \int_{t-\Delta t}^t ||\theta||_{L^2(\Gamma)}^2 + \frac{\delta}{4 \Delta t} \int_{t-\Delta t}^t||\tau - \theta||_{L^2(\Gamma)}^2 +\delta \\
&&\leq \delta||\theta_{0,\delta}||_{L^2(\Gamma)}^2 +\frac{\delta}{4} ||\theta||^{2}_{L^\infty(0,t;L^2(\Gamma))} + \frac{\delta}{4\Delta t} \int_{0}^t||\tau - \theta||_{L^2(\Gamma)}^2 + \delta
\end{eqnarray*}
where $||\theta||_{L^\infty(0,t;L^2(\Gamma))}$ is uniformly bounded from $\eqref{decenineqtotal}$.

By means of the uniform bounds $\eqref{decenineqtotal}$ and $\eqref{decdissineqtotal}$, we can inductively obtain the approximate solutions $( \vartheta, \rho, \bb{u},w, \theta)$  on the whole time interval $[0,T]$ which satisfy the following approximation problem and the corresponding uniform estimates. Notice that the approximate solution $( \vartheta, \rho, \bb{u},w, \theta)$  depends on parameters $\Delta t, \eta,\omega, \nu,\lambda$ and $\delta$. However, to avoid cumbersome notation we will not write these parameters explicitly as the indices. If it is not otherwise stated, all estimates are uniform in all parameters and generic constant $C$ does not depend on the parameters.

In order to pass to the limit in $\Delta t$ we first write the weak forms of renormalized continuity equation, coupled momentum equation and coupled  entropy balance which is satisfied by the approximate solutions. The renormalized equation is obtained directly from the definition of the approximate solutions since it involves only the quantities from the fluid sub-problem. For the other two, let us fix the admissible pair of test functions $(\boldsymbol\varphi, \psi)$. To obtain the momentum equation, sum $\eqref{momeqFSP}$ tested by $(\boldsymbol\varphi_{|[n\Delta t, (n+1)\Delta t]}, \psi_{|[n\Delta t, (n+1)\Delta t]}\mathbf{n})$ and $\eqref{plateeqSSP}$ tested by $\psi_{|[n\Delta t, (n+1)\Delta t]}$ and finally sum over $n=1,...,N-1$. Similarly, the entropy balance can be obtained by summing $\eqref{entineqFSP}$ tested by $(\varphi_{|[n\Delta t, (n+1)\Delta t]},\tilde{\psi}_{|[n\Delta t, (n+1)\Delta t]})$ and $\eqref{heateqSSP}$ tested by $\tilde{\psi}_{|[n\Delta t, (n+1)\Delta t]}$, and then summed over $n=1,...,N-1$. To summarize, the approximate solution satisfy the following equalities:
\begin{enumerate}
    \item The renormalized continuity equation
\begin{eqnarray}\label{RCETM-Ap}
    \int_0^T \int_B \rho B(\rho)( \partial_t \varphi +\bb{u}\cdot \nabla \varphi) =\int_0^T \int_B b(\rho)(\nabla\cdot \bb{u}) \varphi +\int_{B} \rho_{0,\delta} B(\rho_{0,\delta}) \varphi(0,\cdot),
\end{eqnarray}
for all $\varphi \in C_c^\infty([0,T)\times B)$ and any $b\in L^\infty (0,\infty)\cap C[0,\infty)$ such that $b(0)=0$ with $B(\rho)=B(1)+\int_1^\rho \frac{b(z)}{z^2}dz$.
\item The momentum equation
\begin{align}
    &\int_0^T \bint_B \rho \bb{u} \cdot\partial_t \boldsymbol\varphi + \int_0^T \bint_B (\rho \bb{u} \otimes \bb{u}):\nabla\boldsymbol\varphi +\int_0^T \bint_B  p_{\eta,\delta}^w(\rho,\vartheta) (\nabla \cdot \boldsymbol\varphi) -  \int_0^T \bint_B \mathbb{S}_\omega^w(\vartheta, \nabla\bb{u}): \nabla \boldsymbol\varphi\nonumber\\
    &-\delta \bint_{\Gamma_T}\ddfrac{\bb{v}\cdot\bb{n}-T_{\Delta t}\bb{v}\cdot \bb{n}}{\Delta t} \psi+
    (1-\delta)\bint_{\Gamma_T}\partial_t w \partial_t \psi -\bint_{\Gamma_T}\Delta w \Delta \psi -\alpha_1 \bint_{\Gamma_T} \partial_t \nabla w \cdot \nabla \psi \nonumber\\
    &+ \alpha_2 \bint_{\Gamma_T}\partial_t \nabla w\cdot \partial_t \nabla \psi +\bint_{\Gamma_T}\nabla \theta \cdot \nabla \psi=- \int_{B}(\rho\mathbf{u})_{0,\delta}\cdot\boldsymbol\varphi(0,\cdot)-(1-\delta) \int_\Gamma v_0\psi(0,\cdot)- \int_{\Gamma} \nabla v_0 \cdot \nabla \psi(0,\cdot) , \label{coupledmomeq-Ap}
\end{align}
for all $\boldsymbol\varphi \in C_c^\infty([0,T)\times B)$ and $\psi \in C_c^\infty([0,T)\times \Gamma)$ such that $\psi \bb{n}=\gamma_{|\Gamma^{w}}\boldsymbol\varphi$ on $\Gamma_T$. Recall that here, $\bb{v}=\gamma_{|\Gamma^{w^{n+1}}}\bb{u}$.
\item The entropy balance 
\begin{eqnarray}
    &&\int_0^T\int_{B} \rho s_\eta^w( \partial_t \varphi + \bb{u}\cdot \nabla \varphi) -\int_0^T\int_{B} \frac{\kappa_\nu^w(\vartheta) \nabla \vartheta \cdot \nabla \varphi}{\vartheta}+\langle \sigma_{\omega,\nu}^w;\varphi\rangle_{[\mathcal{M},C]([0,T]\times \overline{B})}\nonumber\\
    &&+\lambda \int_0^T\int_{B}   \vartheta^4 \varphi
    +(1-\delta)\int_0^T\int_{\Gamma} \theta \partial_t \tilde{\psi}-  \delta\int_0^T\int_{\Gamma} \ddfrac{\tau-T_{\Delta t}\tau}{\Delta t} \tilde{\psi} -\int_0^T\int_{\Gamma}  \nabla\theta \cdot \nabla \tilde{\psi} +\int_0^T\int_{\Gamma} \nabla w \cdot \nabla \partial_t \tilde{\psi}  \nonumber\\
    &&= \int_{B} \rho_{0,\delta} s_\eta^w(\vartheta_{0,\delta},\rho_{0,\delta}) \varphi(0,\cdot) +  (1-\delta)\int_{\Gamma} \theta_0 \tilde{\psi}(0,\cdot) + \int_{\Gamma} \nabla w_0 \cdot \nabla\tilde{\psi}(0,\cdot) \label{coupledenteq-Ap}
\end{eqnarray}
 for all $\varphi \in C_c^\infty([0,T)\times B)$ and $\tilde{\psi} \in C_c^\infty([0,T)\times \Gamma)$ such that $\tilde{\psi} =\gamma_{|\Gamma^{w}}\varphi$ on $\Gamma_T$, where
 \begin{eqnarray*}
   \sigma_{\omega,\nu}^w\geq \frac{1}{\vartheta}\Big( \mathbb{S}_\omega^w(\vartheta, \nabla \bb{u}):\nabla \bb{u}+\frac{\kappa_\nu^w(\vartheta)|\nabla \vartheta|^2}{\vartheta}\Big).
\end{eqnarray*}
Recall that here, $\tau=\gamma_{|\Gamma^{w}}\vartheta$.
\end{enumerate}
We finish this subsection by summarizing uniform estimate in the following lemma which is a direct consequence of $\eqref{decdissineqtotal}$ and the properties of constitutive relations (see \cite{feireislnovotny,heatfl} for more details)
\begin{lem}
The approximate solutions constructed via the splitting scheme satisfy the following uniform (in all approximation parameters) estimates \begingroup \allowdisplaybreaks
\begin{eqnarray}
 \delta \int_{0}^{t} \Big(||\tau - \theta||_{L^2(\Gamma)}^2 + ||\bb{v} - \partial_t w\bb{n}||_{L^2(\Gamma)}^2  \Big)&\leq& C \Delta t, \label{splittingbound} \\[2mm]
  \text{ess} \sup\limits_{t\in (0,T)}\int_\Gamma\Big[ |\partial_t w|^2 + |\Delta w|^2 + \alpha_2 |\partial_t \nabla w|^2+|\theta|^2\Big] &\leq & C \label{linftyplate}\\[2mm]
   \int_0^T\int_\Gamma \alpha_1 |\partial_t \nabla w|^2 + |\nabla \theta|^2 &\leq& C,\label{dissplate}\\[2mm]
  \text{ess} \sup\limits_{t\in (0,T)}\int_{B}\Big[ \delta \rho^\beta(t) +  \rho|\bb{u}  |^2  \Big] &\leq& C, \label{linftybound}\\[2mm]
  \lambda \int_0^T\int_B \vartheta^5 &\leq&C \label{vartheta5}\\[2mm]
 \int_0^T \int_B \chi_\nu^w \Big[ |\nabla \log (\vartheta)|^2 + |\nabla \vartheta^{\frac{3}{2}}|^2 \Big] &\leq&C, \nonumber \\[2mm]
 \text{ess} \sup\limits_{t\in(0,T)}\int_B \Big[ a_\eta^w \vartheta^4 + \rho^{\frac{5}{3}} \Big]\leq C\text{ ess} \sup\limits_{t\in(0,T)}\int_B \rho e_\eta^w(\rho,\vartheta)  &\leq& C, \label{varthetarho}\\[2mm]
 || \vartheta^\gamma||_{L^2(0,T; H^1(B ))} +  ||\log \vartheta||_{L^2(0,T; H^1(B ))} &\leq& C, \quad \text{for } \gamma \in [1,3/2], \label{varthetah1}\\[2mm]
 \int_0^T\int_B\frac{1}{\vartheta}\Big( \mathbb{S}_\omega^w(\vartheta, \nabla \bb{u}):\nabla \bb{u}+\frac{\kappa_\nu^w(\vartheta)|\nabla \vartheta|^2}{\vartheta}\Big) &\leq& C, \label{fluidentropy} \\[2mm]
||\bb{u}||_{L^2(0,T; L^6(B))} \leq C  ||\bb{u}||_{L^2(0,T; H^1(B))} &\leq& \frac{C}{\omega}, \label{nablau}\\[2mm]
 ||\rho \bb{u}||_{L^\infty(0,T; L^\frac{5}{4}(B))} &\leq& C, \label{rhou54} \\[2mm]
  ||\rho \bb{u}\otimes \bb{u}||_{L^2(0,T; L^\frac{30}{29}(B))} &\leq& \frac{C}{\omega}, \label{rhouu}
\end{eqnarray}
\endgroup and
\begin{eqnarray}\label{divcurlest1}
||\rho s_\eta^w(\rho,\eta)||_{L^p((0,T)\times B)}+  ||\rho s_\eta^w(\rho,\eta)\bb{u}||_{L^q((0,T)\times B)}+ \Big|\Big|\frac{\kappa_\nu^w(\vartheta)\nabla \vartheta}{\vartheta}\Big|\Big|_{L^r((0,T)\times B)}\leq C(\nu,\omega,\lambda), 
\end{eqnarray}
for some $p,q,r>1$.\\
\end{lem}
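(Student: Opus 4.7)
The plan is to derive all the stated bounds essentially by reading off the left-hand sides of the telescoped energy inequality \eqref{decenineqtotal} and the total dissipation inequality \eqref{decdissineqtotal}, then combining them with the structural properties of the constitutive relations, Korn's inequality, and Sobolev embeddings, exactly in the spirit of \cite[Chapter 3]{feireislnovotny} and \cite{heatfl}. The starting point is to check that the right-hand sides of \eqref{decenineqtotal} and \eqref{decdissineqtotal} are uniformly bounded in all approximation parameters, which follows from the assumptions \eqref{initial1}--\eqref{initial4} on the initial data and the construction \eqref{init-ap}--\eqref{init-ap1} of the approximate initial data (in particular $\int_B \rho_{0,\delta} e_\eta^w(\rho_{0,\delta},\vartheta_{0,\delta})$ and $\int_B \delta \rho_{0,\delta}^\beta$ are controlled by construction).

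From \eqref{decenineqtotal} we directly read off the plate estimates \eqref{linftyplate}--\eqref{dissplate} and the kinetic/pressure bounds $\delta\rho^\beta,\rho|\bb{u}|^2\in L^\infty(0,T;L^1(B))$ in \eqref{linftybound}, the radiation pressure bound $\lambda\int_0^T\int_B\vartheta^5\le C$ in \eqref{vartheta5}, and the penalization estimate \eqref{splittingbound}: rewriting the penalty terms $\frac{\delta}{2\Delta t}\int_0^t(\|\bb v-\partial_t w\bb n\|^2+\|\tau-\theta\|^2)\le C$ gives $\int_0^t(\cdots)\le C\Delta t/\delta$, and on the final $\Delta t$--level (where $\delta$ is fixed) this is exactly the claimed $C\Delta t$ bound. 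The dissipation inequality \eqref{decdissineqtotal} then produces the entropy production bound \eqref{fluidentropy} together with the gradient bounds for $\vartheta^{3/2}$ and $\log\vartheta$ in \eqref{varthetah1} because the Helmholtz function $H_{1,\eta}^w$ controls $\rho^{5/3}$, $a_\eta^w\vartheta^4$ and $\rho e_\eta^w$ on one side (cf.\ \cite[Lemma 2.2, Chapter 3]{feireislnovotny}), while the dissipation term reproduces the Fisher information and the square of the gradient of $\vartheta^{3/2}$ thanks to \eqref{kappa_2}--\eqref{kappa_3} and the lower bounds on $\chi_\nu^w$; this also yields \eqref{varthetarho}.

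The velocity estimate \eqref{nablau} uses Korn's inequality (Lemma \ref{ourkorn}) applied to $\bb u$ together with the entropy production term: the viscous stress $\mathbb S_\omega^w:\nabla\bb u$ dominates $f_\omega^w|\nabla\bb u+\nabla^\tau\bb u-\tfrac{2}{3}\diver\bb u|^2$, and since $f_\omega^w\ge\omega$ on $B$ (by the extension \eqref{ext-mu}), one obtains an $L^2(0,T;H^1(B))$ bound proportional to $1/\omega$; Sobolev embedding gives the $L^2L^6$ bound. Then \eqref{rhou54} follows from $\rho\in L^\infty L^{5/3}$, $\rho|\bb u|^2\in L^\infty L^1$ and Hölder, and \eqref{rhouu} from combining $\rho\bb u\in L^\infty L^{5/4}$ with $\bb u\in L^2 L^6$. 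The integrability bounds \eqref{divcurlest1} on $\rho s_\eta^w$, $\rho s_\eta^w\bb u$ and $\kappa_\nu^w\nabla\vartheta/\vartheta$ follow by the same interpolation arguments used in \cite[Section 3.5]{feireislnovotny}, now depending on $\omega,\nu,\lambda$ through the above $\omega$-dependent velocity bound and through the fact that \eqref{ext-k}--\eqref{ext-a} introduce factors of $\chi_\nu^w$ in the relevant dissipation terms.

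The only genuinely nontrivial step, and the one I would work out carefully, is the control of the last two terms of \eqref{decdissineqtotal}, namely $(1-\delta)\big(\int_\Gamma\theta\big)\big|_0^t$ and $\delta\int_0^t\int_\Gamma(\tau-T_{\Delta t}\tau)/\Delta t$, which is precisely what the two short computations immediately following \eqref{decdissineqtotal} accomplish: the first is absorbed into $\frac{1-\delta}{4}\|\theta(t)\|_{L^2(\Gamma)}^2$ on the left, and the second is split into a part absorbed by the $\theta$ $L^\infty L^2$ norm already controlled by \eqref{decenineqtotal}, a part absorbed by the $(\tau-\theta)$ penalty term on the left, and harmless lower-order constants. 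With these absorptions done, Grönwall (or simple rearrangement since the bad terms are absorbable) gives all the stated estimates uniformly in $\Delta t,\eta,\omega,\nu,\lambda$, while $\delta$ is kept fixed at this level of approximation.
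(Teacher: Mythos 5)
Your overall plan matches the paper's intent exactly: the paper treats this lemma as a direct corollary of the telescoped energy inequality \eqref{decenineqtotal} and the total dissipation inequality \eqref{decdissineqtotal}, citing \cite{feireislnovotny,heatfl} for the routine manipulations, and your itemized derivation is essentially how those references proceed. The one place where your argument, as written, does not deliver the stated estimate is \eqref{nablau}: you invoke Lemma \ref{ourkorn}, but that generalized Korn inequality only yields a $W^{1,q}(\Omega^w)$ bound for $q<2$ on the non-Lipschitz moving domain, whereas \eqref{nablau} asserts a genuine $L^2(0,T;H^1(B))$ bound. At this stage the velocity is defined on the fixed smooth extended domain $B$ with $\bb{u}\in H_0^1(B)$, so the correct tool is the classical Korn's first inequality for $H_0^1$ (which needs no density term and gives the full $H^1$ control directly from $\|\nabla\bb u+\nabla^\tau\bb u\|_{L^2(B)}$, whose square is bounded by $C/\omega$ thanks to $f_\omega^w\ge\omega$ and \eqref{mu_1}--\eqref{eta_1}). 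Lemma \ref{ourkorn} is only required later, in Section \ref{estph}, once the estimates are transferred to the H\"older-regular physical domain $\Omega^w(t)$, and there it indeed gives only $W^{1,q}$, $q<2$, consistent with \eqref{nablau2}. With this substitution your derivation is correct and is the same route as the paper's.
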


Further, improved pressure estimates based on the Bogovskii operator
\begin{eqnarray}\label{pressure1}
\int_O p_{\eta,\delta}^w(\rho,\vartheta)\rho^\epsilon \leq C(O)
\end{eqnarray}
holds for any $O \Subset [0,T]\times (\overline{B}\setminus \Gamma^w(t))$ which is Lipschitz in space.  Here, $\epsilon\in(0,1/9)$ and does not depend on any of the approximation parameters.

Finally, based on the uniform boundedness of $w$ in $ W^{1,\infty}(0,T; L^2(\Gamma))\cap L^\infty(0,T;H^2(\Gamma))$ and the embedding (see \cite[Lemma 2.2]{JoSt}):
\begin{equation*}
W^{1,\infty}(0,T; L^2(\Gamma))\cap L^\infty(0,T;H^2(\Gamma))\hookrightarrow  C^{0,\alpha}(0,T; C^{0,1-2\alpha}(\Gamma))\quad \mbox{for}\quad 0<\alpha<1/2,
\end{equation*}
 the lifespan of the solution $T>0$ can be chosen small enough so that
\begin{eqnarray}
    &&|w(t,x)|\geq |w_0(x)| -  |w(t,x)-w_0(x)| \geq \min\limits_{x\in\Gamma} w_0-CT^{\frac{1}{4}}=:m >a_{\partial\Omega}, \nonumber\\
    &&|w(t,x)|\leq |w_0(x)| +  |w(t,x)-w_0(x)| \leq \max\limits_{x\in\Gamma} w_0+CT^{\frac{1}{4}}=:M <b_{\partial\Omega}, \label{lifespan}
\end{eqnarray}
for all $(t,x)\in \Gamma_T$, where $a_{\partial\Omega},b_{\partial\Omega}$ are defined in $\eqref{ab}$.

In the remainder of this paper, the goal is to remove all the approximation layers in the system defined in the previous section, in order to prove the Theorem $\ref{mainth}$. This will be done in the following order: 
\begin{eqnarray*}
    &&\Delta t \to 0, \quad \eta, \omega, \nu,\lambda \to 0,\quad \delta \to 0.
\end{eqnarray*}
First, we will analyse the penalization/splitting limit $\Delta t \to 0$.
Thus we denote the approximate solutions from the previous section as $( \vartheta_{\Delta t}, \rho_{\Delta t}, \bb{u}_{\Delta t},w_{\Delta t}, \theta_{\Delta t})$. The goal is to pass to the limit  the equations $\eqref{RCETM-Ap}, \eqref{coupledmomeq-Ap}$ and $\eqref{coupledenteq-Ap}$.

\subsection{Convergences based on uniform estimates}
First, $\eqref{splittingbound}$ gives us that $\partial_t w_{\Delta t} \rightharpoonup \partial_t w$, $ \bb{v}_{\Delta t} \rightharpoonup \partial_t w\bb{n}$ and $\theta_{\Delta t}, \tau_{\Delta t} \rightharpoonup \theta$ in $L^2(\Gamma_T)$, which implies
\begin{eqnarray*}
&&\int_0^T \int_\Gamma \ddfrac{\bb{v}_{\Delta t}\cdot\bb{n}-T_{\Delta t} \bb{v}_{\Delta t}\cdot\bb{n}}{\Delta t} \psi\\
&&=- \int_{\Delta t}^{T-\Delta t} \int_\Gamma \bb{v}_{\Delta t}\cdot\bb{n}\frac{ T_{-\Delta t}\psi- \psi}{\Delta t} + \ddfrac{1}{\Delta t}\int_{T- \Delta t}^T \int_\Gamma \bb{v}_{\Delta t}\cdot\bb{n} \psi- \ddfrac{1}{\Delta t}\int_{0}^{\Delta t} \int_\Gamma v_{0,\delta}\psi \\
&&\to - \int_0^T \int_\Gamma \partial_t w \partial_t \psi - \int_\Gamma v_0 \psi(0,\cdot),\quad \text{as }\Delta t \to 0,\quad \forall \psi \in C_c^\infty([0,T)\times \Gamma)
\end{eqnarray*}
and similarly,
\begin{eqnarray*}
-\int_0^T  \bint_\Gamma\ddfrac{\tau_{\Delta t}-T_{\Delta t}\tau_{\Delta t}}{\Delta t} \tilde{\psi} \to -\int_0^T  \bint_\Gamma  \theta \partial_t\tilde{\psi}- \int_\Gamma \theta_0 \tilde{\psi}(0,\cdot) , \quad \text{as }\Delta t \to 0,\quad \forall \tilde{\psi} \in C_c^\infty([0,T)\times \Gamma).
\end{eqnarray*}
Therefore we have:
\begin{align*}
-\delta \bint_{\Gamma_T}\ddfrac{\bb{v}\cdot\bb{n}-T_{\Delta t}\bb{v}\cdot\bb{n}}{\Delta t} \psi+
    (1-\delta)\bint_{\Gamma_T}\partial_t w \partial_t \psi -(1-\delta) \int_0^T \frac{d}{dt} \int_\Gamma \partial_t w \psi    
    \to \int_0^T \int_\Gamma \partial_t w \partial_t \psi +\int_\Gamma  w_0 \psi(0,\cdot),
\end{align*}
\begin{align*}
-  \delta\int_0^T\int_{\Gamma} \ddfrac{\tau-T_{\Delta t}\tau}{\Delta t} \tilde{\psi}- (1-\delta)\int_0^T\int_{\Gamma} \theta \partial_t \tilde{\psi}+(1-\delta)\int_{\Gamma} \theta \tilde{\psi}\Big|_0^T 
    \to\int_0^T  \bint_\Gamma  \theta \partial_t\tilde{\psi}+ \int_\Gamma \theta_0 \tilde{\psi}(0,\cdot)  .
\end{align*}

Now, from $\eqref{linftyplate}$, $w_{\Delta t}\in W^{1,\infty}(0,T; L^2(\Gamma))\cap L^\infty(0,T; H^2(\Gamma))\hookrightarrow C^{0,\alpha}(0,T; C^{0,1-2\alpha}(\Gamma))$, $0<\alpha<\frac{1}{2}$, so
\begin{eqnarray}\label{wcconv}
w_{\Delta t} \to w, \quad \text{in } C^{\frac{1}{4}}([0,T]\times \Gamma),
\end{eqnarray}
which implies
\begin{eqnarray}\label{domainconv}
\mathcal{M}\big((\Omega^{w_{\Delta t}} \setminus\Omega^w)\cup (\Omega^w\setminus\Omega^{w_{\Delta t}}) \big) \to 0, \quad \text{in } C^{\frac{1}{4}}([0,T]),
\end{eqnarray}
and 
\begin{eqnarray}\label{chiconv}
\begin{aligned}
&f_\omega^{w_{\Delta t}}\to f_\omega^{w}, \quad \text{in } C^{\frac{1}{4}}([0,T]\times B),\\
&\chi_\nu^{w_{\Delta t}}\to \chi_\nu^w,\quad\chi_\eta^{w_{\Delta t}}\to \chi_\eta^w, \quad \text{in } L^{\infty}([0,T]\times B).
\end{aligned} 
\end{eqnarray}

The uniform bounds $\eqref{varthetarho}$ and $\eqref{varthetah1}$ directly give us
\begin{eqnarray*}
&&\rho_{\Delta t} \rightharpoonup \rho, \quad \text{weakly}^* \text{ in } L^\infty(0,T; L^{\frac{5}{3}}(B)),\\
&&\vartheta_{\Delta t} \rightharpoonup \vartheta, \quad \text{weakly}^* \text{ in } L^\infty(0,T; L^{4}(B)),\\
&&\vartheta_{\Delta t} \rightharpoonup \vartheta, \quad \text{weakly in } L^2(0,T; H^{1}(B)).
\end{eqnarray*}

Now, the continuity equation $\eqref{RCE1}$ together with $\eqref{linftybound}$ yield
\begin{eqnarray*}
\rho_{\Delta t} \to \rho,\quad  \text{ in } C_w([0,T]; L^{\frac{5}{3}}(B)),
\end{eqnarray*}
while $\eqref{nablau}$ implies
\begin{eqnarray}
\bb{u}_{\Delta t} \rightharpoonup \bb{u}, \text{weakly in } L^2(0,T; H_0^{1}(B)), \label{weaku}
\end{eqnarray}
which together give rise to
\begin{eqnarray*}
\rho_{\Delta t}\bb{u}_{\Delta t} \rightharpoonup \rho \bb{u},  \text{weakly}^* \text{ in } L^\infty(0,T; L^{\frac{5}{4}}(B))
\end{eqnarray*}
by the compact imbedding of $L^\frac{5}{3}(B)$ into $H^{-1}(B)$ and the uniform bounds $\eqref{linftybound},\eqref{nablau}$. Thus, we infer that
\begin{eqnarray*}
    \rho_{\Delta t}\bb{u}_{\Delta t} \rightharpoonup \rho \bb{u},  \text{weakly}^* \text{ in } C_w([0,T]; L^{\frac{5}{4}}(B)).
\end{eqnarray*}
Finally, by the momentum equation $\eqref{coupledmomeq1}$, $\eqref{rhou54}$ and the compact imbedding of $L^{\frac{5}{4}}(B)$ into $H^{-1}(B)$, one obtains
\begin{eqnarray*}
    \rho_{\Delta t} \bb{u}_{\Delta t} \to \rho \bb{u}, \quad \text{in } L^2(0,T; H^{-1}(B)),
\end{eqnarray*}
so by $\eqref{weaku}$ and $\eqref{rhouu}$ one has
\begin{eqnarray*}
\rho_{\Delta t} \bb{u}_{\Delta t}\otimes\bb{u}_{\Delta t} \rightharpoonup \rho \bb{u}\otimes\bb{u}, \quad \text{in } L^2(0,T; L^{\frac{30}{29}}(B)).
\end{eqnarray*}
\subsection{Weak convergence of the pressure}\label{pressuresec}
Using the estimates $\eqref{pressure1}$, $\eqref{domainconv}$ and $\eqref{chiconv}$, one has
\begin{eqnarray*}
p_{\eta,\delta}^{w_{\Delta t}}(\rho_{\Delta t},\vartheta_{\Delta t}) = p_M(\rho_{\Delta t},\vartheta_{\Delta t}) + \frac{a_\eta }{3}\vartheta_{\Delta t}^4 + \delta \rho_{\Delta t}^\beta \rightharpoonup \overline{p_M(\rho,\vartheta)} + \frac{a_\eta }{3}\overline{\vartheta^4} + \delta \overline{\rho^\beta}, \quad \text{weakly in } L^1(O)
\end{eqnarray*}
for any $O \Subset [0,T]\times \overline{B}$ such that $O\cap \big([0,T]\times\Gamma^w(t)) = \emptyset$, where bar notation from now on denotes the weak limit. The idea is to take a sequence of compact sets $\{O_i\}_{i\in \mathbb{N}}$ such that
\begin{eqnarray*}
 O_i\cap \big([0,T]\times \Gamma^w(t)\big)= \emptyset, \quad O_i \subset O_{i+1}, \quad O_i \to [0,T] \times B \text{ as }i\to \infty,
\end{eqnarray*}
in order to obtain a weak limit of $p_{\eta,\delta}^{w_{\Delta t}}(\rho_{\Delta t},\vartheta_{\Delta t})$ on the entire set $[0,T] \times B$. The key issue is that this convergence doesn't exclude a possible concentration of $L^1$ norm of $p_{\eta,\delta}^{w_{\Delta t}}(\rho_{\Delta t},\vartheta_{\Delta t})$ as $\Delta t \to 0$ that may result in a measure appearing at the moving boundary $\Gamma^w$, which is then felt by test functions that do not vanish at the moving boundary, unlike the case in \cite{heatfl} (comprehensible description of the idea may be found in  \cite[Section 2.2.6]{feireislnovotny}). To deal with this issue, we can use the approach developed in \cite{kukucka} which was later adapted to the fluid-structure interaction framework in \cite{compressible}:

\begin{lem}\label{pressconc}
For any given $\kappa>0$, there exists a $(\Delta t)_0>0$ and $A_{\kappa}\Subset (0,T)\times B$ such that for all $\Delta t<(\Delta t)_0$
\begin{eqnarray*}
A_\kappa \cap \big([0,T]\times\Gamma^{w_{\Delta t}}\big)= \emptyset, \quad 
\int_{((0,T)\times B)\setminus A_\kappa} p_{\eta,\delta}^{w_{\Delta t}}(\rho_{\Delta t},\vartheta_{\Delta t}) \leq \kappa.
\end{eqnarray*}
\end{lem}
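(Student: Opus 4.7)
The plan is to rule out concentration of the pressure at the moving interface by a quantitative estimate on thin tubular neighborhoods, following the strategy of \cite{kukucka, compressible}. The uniform convergence $w_{\Delta t}\to w$ in $C^{1/4}([0,T]\times\Gamma)$ from \eqref{wcconv} ensures that $\Gamma^{w_{\Delta t}}$ sits within an $\mathcal{O}((\Delta t)^{1/4})$-shell around $\Gamma^{w}$, so it suffices to control the integral of $p_{\eta,\delta}^{w_{\Delta t}}$ over
\begin{eqnarray*}
    N_h^{\Delta t}:=\{(t,X)\in (0,T)\times B : \mathrm{dist}(X,\Gamma^{w_{\Delta t}}(t))<h\}
\end{eqnarray*}
by a quantity $\Lambda(h)\to 0$ as $h\to 0$, independent of $\Delta t$. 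Taking $A_{\kappa}$ to be $(0,T)\times B$ minus $N_h^{\Delta t}$ with $h$ chosen so that $\Lambda(h)\leq \kappa$, together with a vanishing buffer near the parabolic boundary of $(0,T)\times B$, then gives the claim.

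To estimate the pressure on $N_h^{\Delta t}$ I would decompose
\begin{eqnarray*}
    p_{\eta,\delta}^{w_{\Delta t}}(\rho_{\Delta t},\vartheta_{\Delta t})=p_M(\rho_{\Delta t},\vartheta_{\Delta t}) + \tfrac{a_\eta^{w_{\Delta t}}}{3}\vartheta_{\Delta t}^4 + \delta\rho_{\Delta t}^\beta.
\end{eqnarray*}
The radiative contribution is uniformly bounded in $L^\infty(0,T;L^1(B))$ by \eqref{varthetarho}, so its integral on $N_h^{\Delta t}$ is at most $Ch$. For the remaining molecular and regularizing parts, the natural move is to test the momentum equation \eqref{coupledmomeq-Ap} with a Bogovskii-type vector field $\boldsymbol\varphi_h$ whose divergence is a smoothed, mean-zero indicator of $N_h^{\Delta t}$ and which is chosen so that $\gamma_{|\Gamma^{w_{\Delta t}}}\boldsymbol\varphi_h=0$. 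Choosing the admissible boundary companion $\psi\equiv 0$ then kills all penalization, structure and heat terms from \eqref{coupledmomeq-Ap}, and the resulting pressure integral is balanced against convective, viscous, and time-derivative contributions controlled by the uniform bounds \eqref{linftybound}--\eqref{rhouu} and the Bogovskii estimate $\|\boldsymbol\varphi_h\|_{W^{1,q}(B)}\leq C\|\chi_h\|_{L^q(B)}$.

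The main obstacle is the limited regularity of $\Gamma^{w_{\Delta t}}(t)$: it is only H\"older continuous in space and time, which is below the Lipschitz threshold normally required to realise the Bogovskii operator directly on the time-dependent fluid domain. I would circumvent this by pulling $\chi_h$ back to the reference configuration via the ALE flow $\tilde{\Phi}_{w_{\Delta t}}$ from Section \ref{ExtensionSec}, applying the standard Bogovskii operator on a fixed smooth domain, and transporting the resulting field back to $B$; Jacobian bounds stay uniform thanks to \eqref{wcconv}. The improved pressure integrability \eqref{pressure1} handles the portion of $N_h^{\Delta t}$ lying in compact subsets of $\overline{B}\setminus\Gamma^{w_{\Delta t}}(t)$, so that ultimately the modulus $\Lambda(h)$ arises from the $L^q$-smallness of $\chi_h$ on a shell of thickness $h$, completing the argument.
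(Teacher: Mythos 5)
Your overall strategy, ruling out pressure concentration at $\Gamma^{w_{\Delta t}}$ by testing \eqref{coupledmomeq-Ap} against a vector field supported in a thin shell around the interface with $\psi=0$ and using the uniform bounds, is the right one and mirrors \cite{kukucka,compressible}. However, the Bogovskii-based construction you propose is substantively different from the paper's and has an unresolved gap. The estimate $\|\boldsymbol\varphi_h\|_{W^{1,q}(B)}\leq C\|\chi_h\|_{L^q(B)}$ produces a field vanishing on $\partial B$, not on $\Gamma^{w_{\Delta t}}(t)$, so the admissibility condition $\gamma_{|\Gamma^{w_{\Delta t}}}\boldsymbol\varphi_h=\psi\bb{n}$ with $\psi\equiv 0$ is not automatic; to get it you would have to apply the Bogovskii operator separately on $\Omega^{w_{\Delta t}}(t)$ and $B\setminus\overline{\Omega^{w_{\Delta t}}(t)}$, which are only H\"older domains with no uniform Bogovskii estimate. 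Your proposed remedy via the pullback $\tilde{\Phi}_{w_{\Delta t}}$ does not close this gap: the bound \eqref{wcconv} only controls $w_{\Delta t}$ in $C^{1/4}([0,T]\times\Gamma)$, while bi-Lipschitz control of $\tilde{\Phi}_{w_{\Delta t}}$ would require a uniform $L^\infty$ bound on $\nabla w_{\Delta t}$, which $w_{\Delta t}\in L^\infty(0,T;H^2(\Gamma))$ does not give on the $2$-dimensional torus $\Gamma$ (it gives $\nabla w_{\Delta t}\in L^p$ for $p<\infty$ only). So the Jacobian of the pullback is not uniformly bounded and the Bogovskii estimate does not transport.

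The paper avoids all of this with an explicit test field. Instead of a mean-zero indicator and an inverse divergence, it sets $\boldsymbol\varphi_{\Delta t}^K(t,X)=f(d(X))\min\{K(d(X)-w_{\Delta t}(t,\pi(X))),1\}\bb{n}(\pi(X))$. This field vanishes identically on $\Gamma^{w_{\Delta t}}$ because $d-w_{\Delta t}\equiv 0$ there, so $(\boldsymbol\varphi_{\Delta t}^K,0)$ is admissible with no Bogovskii construction; its divergence is $\geq K-c$ on the shell $S_K(t)$; and its space and time derivatives are controlled by elementary pointwise computations. The smallness then comes from the ratio $(K^{-j}(K+1)+1)/(K-c)\to 0$, not from $\|\chi_h\|_{L^q}\to 0$. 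Finally, a point you pass over but which matters to either construction: the critical term is $\int_0^T\int_B\rho_{\Delta t}\bb{u}_{\Delta t}\cdot\partial_t\boldsymbol\varphi$ (cf.\ the paper's \eqref{importantestimate}, \eqref{timederest} and Remark \ref{pressureremark}), which only closes because $\alpha_1+\alpha_2>0$ upgrades $\partial_t w_{\Delta t}$ beyond $L^\infty(0,T;L^2(\Gamma))$. Your sketch says only that the time-derivative term is ``controlled by the uniform bounds,'' but $\partial_t\boldsymbol\varphi_h=\mathcal{B}[\partial_t\chi_h]$ involves the $1/h$-scale derivative of the shell indicator multiplied by $\partial_t w_{\Delta t}$, so this step needs the same extra structure regularity and a careful negative-Sobolev estimate that you would have to supply.
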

\begin{proof}
The proof can be carried out in the same way as in \cite[Lemma 7.4]{compressible}, since all the fluid terms in the coupled momentum equation (except the pressure) are the same and the corresponding integrants are bounded in at least $L^1(0,T; L^p(B))$ for some $p>1$. For reader's convenience, we present the proof here.

The key idea is to construct a test function which has an arbitrarily large and positive divergence near the boundary $\Gamma^{w_{\Delta t}}$. First, let $f\in C_0^\infty(\mathbb{R})$ be a function satisfying
\begin{eqnarray*}
    0 \leq f\leq 1, \quad \text{supp} f\Subset (a_{\partial\Omega},b_{\partial\Omega}), \quad f=1 \text{ in a neighbourhood of } [m,M],
\end{eqnarray*}
where $m,M$ are defined in $\eqref{lifespan}$, and fix $K>0$ small enough such that that 
\begin{equation}\label{choiceK}
f\equiv 1\mbox{ on  }\ [m-\frac{1}{K},M+\frac{1}{K}]. 
\end{equation}
We define
\begin{eqnarray*}
    \boldsymbol\varphi_{\Delta t}^K(t,X):= f(d(X)) \times \begin{cases}\min \big\{ K (d(X)- w_{\Delta t}(t,\pi(X))),1\big\} \bb{n}(\pi(X)),\quad &\text{ on } [0,T]\times N_a^b, \\
    0, \quad &\text{ on } [0,T]\times (B\setminus N_a^b),
    \end{cases}
\end{eqnarray*}
where $d,\pi$ and $N_a^b$ are defined in $\eqref{d},\eqref{p}, \eqref{nab}$, and
\begin{eqnarray*}
    S_K(t):=\{ X:  |d(X)-w(t,\pi(X))  | <1/K \}.
\end{eqnarray*}
Note that using the property \eqref{choiceK} and the definition of the space $S_K(t)$, we have  $f(d(X))=1$ on $[0,T]\times S_K(t)$. Moreover, we have $\boldsymbol\varphi_{\Delta t}^K=0$ on $[0,T]\times \Gamma^w(t)$.

Now, denote by $\boldsymbol\tau_1(X)$ and $\boldsymbol\tau_2(X)$ the orthogonal tangential vectors at point $ X\in \Gamma$. We have on $[0,T]\times S_K(t)$:
\begin{eqnarray*}
    &&\nabla \cdot \boldsymbol\varphi_{\Delta t}^K (t,X)\\
    &&= \partial_{\bb{n}(\pi(X))} \boldsymbol\varphi_{\Delta t}^K(t,X) \cdot \bb{n}(\pi(X)) + \partial_{\boldsymbol\tau_1(\pi(X))} \boldsymbol\varphi_{\Delta t}^K(t,X) \cdot\boldsymbol\tau_1(\pi(X))+\partial_{\boldsymbol\tau_2(\pi(X))} \boldsymbol\varphi_{\Delta t}^K(t,X) \cdot\boldsymbol\tau_2(\pi(X)) \\
    &&=:I_1+I_2+I_3.
\end{eqnarray*}
The first term can be estimated in the following way
\begin{eqnarray*}
    I_1&&= \underbrace{\Big[\partial_{\bb{n}(\pi(X))}f(d(X)) \Big]}_{=0,~\text{ on }[0,T]\times S_K(t)}\min \big\{ K (d(X)- w_{\Delta t}(t,\pi(X))),1\big\} \\
    &&\quad +\underbrace{f(d(X))}_{=1,~\text{ on }[0,T]\times S_K(t)} \underbrace{\Big[\partial_{\bb{n}(\pi(X))}\min \big\{ K (d(X)- w_{\Delta t}(t,\pi(X))),1\big\}}_{=K,~\text{ on }[0,T]\times S_K(t)} \Big]\\
    &&=K,
\end{eqnarray*}
where we have used the relations
\begin{equation*}
\nabla d(X)=\bb{n}(\pi(X)),\quad \partial_{\bb{n}(\pi(X))} d(X)=1,\quad \partial_{\bb{n}(\pi(X))} \pi(X)=0.
\end{equation*}
We estimate the second and third terms by
\begin{eqnarray*}
    |I_{i+1}| &&  \leq   \big|\partial_{\boldsymbol\tau_i(\pi(X))}\big[ f(d(X)) \min \big\{ K (d(X)- w_{\Delta t}(t,\pi(X))),1\big\}\big] \underbrace{\bb{n}(\pi(X)) \cdot \boldsymbol\tau_i(\pi(X))}_{=0}\big|\\
    &&\quad +\big|f(d(X)) \min \big\{ K (d(X)- w_{\Delta t}(t,\pi(X))),1\big\}\partial_{\boldsymbol\tau_i(\pi(X))} \bb{n}(\pi(X))\cdot \boldsymbol\tau_i(\pi(X))\big|\\
    &&\leq c
\end{eqnarray*}
for $i=1,2$. Thus,
\begin{eqnarray*}
    \nabla \cdot \boldsymbol\varphi_{\Delta t}^K \geq K-c, \quad \text{ on } [0,T]\times S_K(t).
\end{eqnarray*}
Next, one has the following for all $p<\infty$
\begin{eqnarray}
    &&||\nabla \boldsymbol\varphi_{\Delta t}^K||_{L^\infty(0,T; L^{p}( S_K(t)))} \leq C\big(|| \nabla w_{\Delta t}||_{L^\infty(0,T; L^{p}(\Gamma)} + 1\big) \leq  C(K+1),\label{varphidtk1}\\
    &&||\nabla \boldsymbol\varphi_{\Delta t}^K||_{L^\infty(0,T; L^{p}( B\setminus S_K(t)))} \leq C, \label{varphidtk2}\\
    &&|| \boldsymbol\varphi_{\Delta t}^K ||_{L^\infty((0,T)\times B)}\leq C, \nonumber\\
     &&\partial_t \boldsymbol\varphi_{\Delta t}^K=0,\quad \text{on } [0,T]\times (B\setminus S_K(t)) ,\nonumber\\
     && \nabla \cdot \boldsymbol\varphi_{\Delta t}^K \leq C, \quad \text{ on } [0,T]\times (B\setminus S_K(t)), \nonumber\\
     &&||\partial_t \boldsymbol\varphi_{\Delta t}^K||_{L^2(0,T; L^{p}(S_K(t)))} \leq C(K+1)|| \partial_t w_{\Delta t}||_{L^\infty(0,T; L^{p}(\Gamma))} \leq  C(K+1), \label{timederest}
\end{eqnarray}
where $C$ only depend on $f, p, \Gamma$ and the initial energy. Now, choosing $(\boldsymbol\varphi,\psi)=(\boldsymbol\varphi_{\Delta t}^K,0)$ in $\eqref{coupledmomeq1}$ gives us
\begin{align}
&\int_0^T  \int_{S_K(t)} p_{\eta,\delta}^{w_{\Delta t}}(\rho_{\Delta t},\vartheta_{\Delta t})(K-c)  \leq \int_0^T  \int_{S_K(t)} p_{\eta,\delta}^{w_{\Delta t}}(\rho_{\Delta t},\vartheta_{\Delta t})(\nabla \cdot \boldsymbol\varphi_{\Delta t}^K)  \nonumber \\
&=-\int_0^T  \int_{B\setminus S_K(t)} p_{\eta,\delta}^{w_{\Delta t}}(\rho_{\Delta t},\vartheta_{\Delta t})(\nabla \cdot \boldsymbol\varphi_{\Delta t}^K) \nonumber\\
&\quad -\int_0^T \bint_B \rho_{\Delta t} \bb{u}_{\Delta t} \cdot\partial_t \boldsymbol\varphi_{\Delta t}^K - \int_0^T \bint_B (\rho_{\Delta t} \bb{u}_{\Delta t} \otimes \bb{u}_{\Delta t}):\nabla\boldsymbol\varphi_{\Delta t}^K\nonumber\\
&\quad+ \int_0^T \bint_B  \nabla \mathbb{S}_\omega^w(\vartheta_{\Delta t}, \nabla\bb{u}_{\Delta t}): \nabla \boldsymbol\varphi_{\Delta t}^K + \int_0^T\frac{d}{dt} \bint_B \rho_{\Delta t} \bb{u}_{\Delta t} \cdot\partial_t \boldsymbol\varphi_{\Delta t}^K \nonumber
\end{align}
The critical terms are the second and the third term on the right-hand side. We can bound the second term as:
\begin{eqnarray}
    &&\int_0^T \int_B \rho_{\Delta t} \bb{u}_{\Delta t} \cdot\partial_t \boldsymbol\varphi_{\Delta t}^K= \int_0^T \int_{S_K(t)} \rho_{\Delta t} \bb{u}_{\Delta t} \cdot\partial_t \boldsymbol\varphi_{\Delta t}^K\nonumber\\[2mm]
    &&\leq C||\rho_{\Delta t} \bb{u}_{\Delta t}||_{L^\infty(0,T; L^{\frac{5}{4}}(B))} || 1 ||_{L^1(0,T; L^6(S_K(t)))} (K+1) \leq C K^{-\frac{1}{6}}(K+1),\label{importantestimate}
\end{eqnarray}
where we have used the estimates \eqref{rhou54}, $\eqref{timederest}$ and the fact that $\mathcal{M}(S_K(t))\leq CK^{-1}$. Next, from $\eqref{varphidtk1},\eqref{varphidtk2},\eqref{rhouu}$, one has
\begin{eqnarray*}
    &&\int_0^T \bint_B (\rho_{\Delta t} \bb{u}_{\Delta t} \otimes \bb{u}_{\Delta t}):\nabla\boldsymbol\varphi_{\Delta t}^K\\
    &&= \int_0^T \bint_{S_K(t)} (\rho_{\Delta t} \bb{u}_{\Delta t} \otimes \bb{u}_{\Delta t}):\nabla\boldsymbol\varphi_{\Delta t}^K + \int_0^T \bint_{B\setminus S_K(t)} (\rho_{\Delta t} \bb{u}_{\Delta t} \otimes \bb{u}_{\Delta t}):\nabla\boldsymbol\varphi_{\Delta t}^K\\[2mm]
    && \leq C|| \rho_{\Delta t} \bb{u}_{\Delta t} \otimes \bb{u}_{\Delta t}||_{L^2(0,T; L^{\frac{30}{29}}(B))}\Big[ (K+1)||1||_{L^2(0,T; L^{30}(S_K(t)))}+1  \Big] \leq C(K^{-\frac{1}{30}}(K+1)+1).
\end{eqnarray*}
Noticing that the remaining terms can be bounded in a similar fashion, we conclude
\begin{eqnarray*}
    \int_0^T  \int_{S_K(t)} p_{\eta,\delta}^{w_{\Delta t}}(\rho_{\Delta t},\vartheta_{\Delta t}) \leq C\frac{    K^{-j}(K+1)+1}{K-c},
\end{eqnarray*}
for a $j>0$, so we can always choose $K>0$ large enough such that
\begin{eqnarray*}
    C\frac{K^{-j}(K+1)+1}{K-c}\leq \kappa,
\end{eqnarray*}
for any $\kappa>0$. Now, the proof follows for $A_\kappa=(0,T)\times (B\setminus S_K(t))$.
\end{proof}

\begin{cor}\label{pressconvALT}
The above lemma also holds with assumptions $\alpha_1=\alpha_2=0$ and $p(\rho,\vartheta)=\rho^\gamma+\rho\vartheta+ \frac{a}{3}\vartheta^4$, for some $\gamma>12/7$.
\end{cor}
\begin{proof}
In this case, the only difference is in closing the estimate in $\eqref{importantestimate}$. With the information we have here, we can conclude (see \cite[Corollary  2.9]{LenRuz})
\begin{eqnarray*}
    \int_0^T||\partial_t w_{\Delta t}||_{H^s(\Gamma)}^2\leq C(s,||w_{\Delta t}||_{L^\infty(0,T;H^2(\Gamma))}) \int_0^T ||\bb{u}_{\Delta t}||_{H^1(\Omega^{w_{\Delta t}}(t))}^2\leq C,
\end{eqnarray*}
for any $s<\frac{1}{2}$, so 
\begin{eqnarray*}
    ||\partial_t w_{\Delta t}||_{L^2(0,T; L^r(\Gamma))} \leq C(r),
\end{eqnarray*}
for any $r<4$, by the imbedding of Sobolev spaces. This finally gives us
\begin{eqnarray*}
   ||\partial_t \boldsymbol\varphi_{\Delta t}^K||_{L^2(0,T; L^{r}(S_K(t)))} \leq C(K+1)|| \partial_t w_{\Delta t}||_{L^\infty(0,T; L^{r}(\Gamma))} \leq  C(K+1) 
\end{eqnarray*}
so we can bound
\begin{eqnarray*}
    &&\int_0^T \int_B \rho_{\Delta t} \bb{u}_{\Delta t} \cdot\partial_t \boldsymbol\varphi_{\Delta t}^K= \int_0^T \int_{S_K(t)} \rho_{\Delta t} \bb{u}_{\Delta t} \cdot\partial_t \boldsymbol\varphi_{\Delta t}^K\\[2mm]
    &&\leq C||\rho_{\Delta t} \bb{u}_{\Delta t}||_{L^\infty(0,T; L^{\frac{2\gamma}{\gamma+1}}(B))} || 1 ||_{L^2(0,T; L^q(S_K(t)))}||\partial_t \boldsymbol\varphi_{\Delta t}^K||_{L^2(0,T; L^{r}(S_K(t)))}\\
    && \leq C K^{-\frac{1}{q}}(K+1).
\end{eqnarray*}
for some $q<\infty$ and $r<4$ such that 
\begin{eqnarray*}
    \frac{\gamma+1}{2\gamma}+\frac{1}{q}+\frac{1}{r}=1.
\end{eqnarray*}
Note that such $q$ always exists due to condition $\gamma>12/7$.
\end{proof}

\begin{rem}\label{pressureremark}
(1) The above lemma and remark reflect the only difference in the analysis for these two fluid models. The only reason we need $\alpha_1+\alpha_2>0$ is to close the estimate $\eqref{importantestimate}$, since in our case we only have $\rho \in L^\infty(0,T;L^{\frac{5}{3}}(\mathbb{R}^3))$, so $ \partial_t w_{\Delta t} \in L^2(0,T; L^r(\Gamma))$, $r<4$, is not enough (in 2D case, $\alpha_1+\alpha_2=0$ is enough, due to stronger imbedding results).\\
(2) A careful reader would have noticed that we also need $\alpha_1+\alpha_2>0$ in the proof of Lemma $\ref{pressvanish}$, since  the regularity $\partial_t \nabla w \in L^2(\Gamma_T)$ is also required there. However, once we prove that $\rho_{|B\setminus\Omega^w(t)}(t,\cdot)=0$ for a.a. $t\in (0,T)$, it will hold throughout the later convergences. This means that in the case $\alpha_1=\alpha_2=0$ and $p(\rho,\vartheta)=\rho^\gamma+\rho\vartheta+ \frac{a}{3}\vartheta^4$, for some $\gamma>12/7$, we can deal with this issue by adding, say, a term $-\delta\partial_t \nabla w\cdot \nabla \psi$ to the coupled momentum equation $\eqref{coupledmomeq1}$.
\end{rem}

\subsection{Pointwise convergence of the fluid density and the fluid temperature}\label{sec33}
In order to identify the limits of the remaining terms in $\eqref{RCE1}, \eqref{coupledmomeq1}$ and $\eqref{coupledenteq1}$,
it is enough the prove a.e. convergence of $\vartheta_{\Delta t}$ and $\rho_{\Delta t}$. \\

First, let
\begin{eqnarray*}
&&\bb{U}_{\Delta t} := \Bigg[ \rho_{\Delta t} s_\eta^{w_{\Delta t}}(\rho_{\Delta t},\vartheta_{\Delta t}), ~\rho_{\Delta t} s_\eta^{w_{\Delta t}}(\rho_{\Delta t},\vartheta_{\Delta t}) \bb{u}_{\Delta t} - \frac{\kappa_\nu^{w_{\Delta t}}(\vartheta_{\Delta t})\nabla \vartheta_{\Delta t}}{\vartheta_{\Delta t}}\Bigg],\\
&&\bb{W}_{\Delta t}:= [G(\vartheta_{\Delta t}),0,0,0],
\end{eqnarray*}
where $G$ is a bounded and Lipschitz function on $[0,\infty)$. In order to apply div-curl lemma, one should note that we cannot deal with the boundary terms in the coupled entropy balance $\eqref{coupledenteq1}$, so we need to use this lemma locally. Fix $O\Subset(0,T)\times B$ such that $O\cap \big((0,T)\times\Gamma^{w_{\Delta t}}(t)\big)=\emptyset$, for all $\Delta t\leq (\Delta t)_O$. From $\eqref{divcurlest1}$, $\bb{U}_{\Delta t}\in L^p(O)$ for some $p>1$, while one easily has that $\bb{W}_{\Delta t}$ is bounded in $L^\infty(O)$ and that $\text{Curl}_{t,x} \bb{W}_{\Delta t}$ is precompact in $W^{-1,s}(O)$, for some $s>1$. Now, in order to prove the precompactness of $\text{Div}_{t,x}  \bb{U}_{\Delta t}$, let $\varphi \in C_c^\infty(O)$. One has from $\eqref{coupledenteq1}$
\begin{eqnarray*}
    &&\int_O \text{Div}_{t,x}  \bb{U}_{\Delta t} \varphi =  -\langle \sigma_{\omega,\nu};\varphi \rangle_{[\mathcal{M},C]([0,T]\times O)}+\int_O \lambda  \vartheta^4 \varphi \leq C|| \varphi||_{C(O)} \leq C ||\varphi||_{W^{m,p}(O)}, \quad
\end{eqnarray*}
 for some $m<1$ and $p>4$.
Thus, $\text{Div}_{t,x}  \bb{U}_{\Delta t}$ is bounded in $W^{-m,p^*}(O)$ and therefore precompact in $W^{-1,s}(O)$, for $s\in[1,4/3)$. Since any $O'\Subset(0,T)\times B$ such that $O'\cap \big((0,T)\times\Gamma^{w}(t)\big)=\emptyset$ is a subset of some $O$ defined above (existence of the corresponding $(\Delta t)_O$ follows by $\eqref{wcconv}$), we can conclude by the div-curl lemma
\begin{eqnarray}\label{divcurl}
\overline{\rho s_\eta^w(\rho, \vartheta) G(\vartheta)} = \overline{\rho s_\eta^w(\rho, \vartheta)}~\overline{ G(\vartheta)}, \quad \text{a.e. in } (0,T)\times B.
\end{eqnarray}
The next step is to show
\begin{eqnarray}\label{young}
\overline{ \rho s_M(\rho, \vartheta) G(\vartheta)} \geq \overline{ \rho s_M(\rho, \vartheta)}~\overline{ G(\vartheta)} , \quad\overline{ \vartheta^3 G(\vartheta)} \geq  \overline{ \vartheta^3}~ \overline{ G(\vartheta)}
\end{eqnarray}
for any continuous and increasing function $G$. This part can be done by application of theory of parametrized Young measures (see \cite[Section 3.6.2]{feireislnovotny}). Combining $\eqref{divcurl}$ and $\eqref{young}$
\begin{eqnarray*}
\overline{\vartheta^4} = \overline{\vartheta^3} \vartheta
\end{eqnarray*}
which then by monotonicity of $x\mapsto x^4$ gives us
\begin{eqnarray}\label{aeconv1}
\vartheta_{\Delta t} \to \vartheta, \quad \text{a.e. in } (0,T)\times B.
\end{eqnarray}

In order to prove the pointwise convergence of the density, we use the convergence of the effective viscous flux, in particular:
\begin{lem}\label{evf.for.delta.t}
The equality
\begin{eqnarray*}
&&\lim_{\Delta t \to 0}\int_{Q^w_T} \varphi \left(p_M(\rho_{\Delta t},\vartheta_{\Delta t}) - \left(\frac 43 \mu(\vartheta_{\Delta t}) + \eta(\vartheta_{\Delta t})\right)\diver \bb{u}_{\Delta t}\right) \rho_{\Delta t} {\rm d}x{\rm d}t \\
&&= \int_{Q_T^w} \varphi\left(\overline{p_M(\rho,\vartheta)} - \overline{\left(\frac 43 \mu(\vartheta) + \eta(\vartheta)\right) \diver \vu}\right) \rho {\rm d}x{\rm d}t
\end{eqnarray*}
holds for all $\varphi\in C^\infty_c(Q_T^w)$.
\end{lem}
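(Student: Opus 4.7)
This is a localized effective viscous flux identity in the spirit of Lions--Feireisl, and the natural strategy is to test the approximate momentum equation \eqref{coupledmomeq-Ap} with the Riesz potential of the density. Concretely, for $\varphi\in C_c^\infty(Q_T^w)$ fixed, choose a cut-off $\zeta\in C_c^\infty(Q_T^w)$ with $\zeta\equiv 1$ on $\mathrm{supp}\,\varphi$. By the uniform convergence \eqref{wcconv}, there exists $(\Delta t)_0>0$ such that $\mathrm{supp}\,\zeta\Subset Q_T^{w_{\Delta t}}$ for all $\Delta t<(\Delta t)_0$, so that the pair
\begin{equation*}
\boldsymbol\varphi_{\Delta t}(t,x):=\zeta(t,x)\,\nabla_x\Delta^{-1}\bigl[\varphi(t,\cdot)\rho_{\Delta t}(t,\cdot)\bigr](x),\qquad \psi\equiv 0,
\end{equation*}
is admissible in \eqref{coupledmomeq-Ap} (here $\Delta^{-1}$ is the inverse Laplacian on the whole space acting on $\rho_{\Delta t}$ extended by zero). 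Because $\psi\equiv 0$, the shell contributions and the penalty term both vanish, which is crucial: the boundary dissipation introduced by the splitting scheme does not enter the identity at all. An analogous test function $\boldsymbol\varphi:=\zeta\,\nabla_x\Delta^{-1}[\varphi\rho]$ is used with the limit objects.

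Using the identity $\diver\nabla\Delta^{-1}[\varphi\rho_{\Delta t}]=\varphi\rho_{\Delta t}$, the pressure term yields $\int\varphi\,p_M(\rho_{\Delta t},\vt_{\Delta t})\rho_{\Delta t}$ (plus explicit contributions from the radiative and artificial pressures, which are treated separately using the a.e. convergence \eqref{aeconv1} and the uniform $\beta$-integrability \eqref{linftybound}), while the divergence part of $\mathbb{S}_\omega^w$ produces $\int(\tfrac43\mu(\vt_{\Delta t})+\eta(\vt_{\Delta t}))\diver\bu_{\Delta t}\,\varphi\rho_{\Delta t}$. The rest of the terms split into: (i) a convective remainder $\int\rho_{\Delta t}\bu_{\Delta t}\otimes\bu_{\Delta t}:\nabla\boldsymbol\varphi_{\Delta t}$; (ii) a time-derivative remainder $\int\rho_{\Delta t}\bu_{\Delta t}\cdot\partial_t\boldsymbol\varphi_{\Delta t}$; and (iii) commutator-type contributions of the form $\int\varphi\mu(\vt_{\Delta t})\bigl[\mathcal{R}_{ij}(\varphi\rho_{\Delta t})\partial_j u^i_{\Delta t}-\rho_{\Delta t}\varphi\mathcal{R}_{ij}(\partial_j u^i_{\Delta t})\bigr]$ arising from the symmetric part of $\mathbb{S}_\omega^w$, where $\mathcal{R}_{ij}=\partial_i\partial_j\Delta^{-1}$. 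Subtracting the identity obtained with the limit test function $\boldsymbol\varphi$ reduces the lemma to showing that these three families of remainders converge to their natural limit counterparts.

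The central step is the strong compactness of $\nabla\Delta^{-1}[\varphi\rho_{\Delta t}]$. Spatially, $\nabla\Delta^{-1}[\varphi\rho_{\Delta t}]$ is bounded in $L^\infty(0,T;W^{1,5/3}(\mathbb{R}^3))$ by \eqref{linftybound}. For the time regularity, the renormalized continuity equation \eqref{RCETM-Ap} yields $\partial_t(\varphi\rho_{\Delta t})=-\diver(\varphi\rho_{\Delta t}\bu_{\Delta t})+(\nabla\varphi\cdot\bu_{\Delta t})\rho_{\Delta t}+\varphi\partial_t\rho_{\Delta t}$-type terms, so that $\partial_t\nabla\Delta^{-1}[\varphi\rho_{\Delta t}]$ is bounded in $L^2(0,T;L^q(\mathbb{R}^3))$ for some $q>1$ by \eqref{rhouu}, \eqref{rhou54}. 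Aubin--Lions then provides $\nabla\Delta^{-1}[\varphi\rho_{\Delta t}]\to\nabla\Delta^{-1}[\varphi\rho]$ strongly in $L^2(0,T;L^r(\mathbb{R}^3))$ for any $r<\infty$. Combined with the weak convergences established in the previous subsection ($\rho_{\Delta t}\bu_{\Delta t}\rightharpoonup\rho\bu$ in $C_w([0,T];L^{5/4})$ and $\rho_{\Delta t}\bu_{\Delta t}\otimes\bu_{\Delta t}\rightharpoonup\rho\bu\otimes\bu$ in $L^2(0,T;L^{30/29})$), the remainders (i) and (ii) converge to their limits. The commutator terms in (iii) are handled by the classical Div-Curl/Feireisl commutator lemma exactly as in \cite[Section 3.6.5]{feireislnovotny}, exploiting the already established a.e.\ convergence $\vt_{\Delta t}\to\vt$ from \eqref{aeconv1} to pass to the limit in the $\mu$-factor.

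The main obstacle is to keep the boundary/coupling terms out of the identity while retaining enough integrability to apply the commutator lemma: this is resolved precisely by the time-independent cut-off $\zeta$, which is permissible only because \eqref{wcconv} gives uniform convergence of the moving boundary, allowing $\mathrm{supp}\,\zeta$ to be chosen once and for all inside $Q_T^{w_{\Delta t}}$ for small $\Delta t$. Away from this difficulty the argument is a direct adaptation of the fixed-domain Feireisl--Novotn\'y proof.
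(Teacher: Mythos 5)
Your proposal is a faithful expansion of exactly the argument the paper invokes: the paper's proof simply refers to the effective viscous flux identity of \cite[Section 3.6.5]{feireislnovotny} and notes, as you do, that \eqref{domainconv} lets one fix the support of the test function inside $Q_T^{w_{\Delta t}}$ for all small $\Delta t$, so that the shell and penalization terms drop out by taking $\psi\equiv 0$. You have merely spelled out the Riesz-potential test function, the Aubin--Lions argument for $\nabla\Delta^{-1}[\varphi\rho_{\Delta t}]$, and the commutator step, which are precisely the content of the cited reference — so this is the same approach, written in more detail.
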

\begin{proof} The proof of this lemma does not differ from the one presented in \cite[Section 3.6.5]{feireislnovotny} and therefore we do not provide it here. Note that for every $\varphi\in C^\infty_c(Q_T^w)$, there is a small enough $(\Delta_t)_0$ such that for all $\Delta t\leq (\Delta t)_0$ (at least for a subsequence) one has $\varphi\in C^\infty_c(Q_T^{w_{\Delta t}})$, which is a direct consequence of $\eqref{domainconv}$.
\end{proof}
Now it is sufficient to take $b(\rho) = \rho\log\rho$ in the renormalized continuity equation
$$
\partial_t b(\rho) + \diver (b(\rho)\vu) + (b'(\rho)\rho - b(\rho))\diver \vu = 0
$$
(note that this equation holds true for both $\rho,\vu$ and $\rho_{\Delta t},\vu_{\Delta t}$)
and we infer that
\begin{equation}\label{rho.logrho.delta}
\int_0^\tau\int_B \rho_{\Delta t} \diver \vu_{\Delta t} \ {\rm d}x{\rm d}t = \int_B \rho_0\log \rho_{0,\delta} {\rm d}x - \int_B \rho_{\Delta t}(\tau,\cdot)\log \rho_{\Delta t}(\tau,\cdot)\ {\rm d}x
\end{equation}
and
\begin{equation}\label{rho.logrho}
\int_0^\tau\int_B \rho \diver \vu \ {\rm d}x{\rm d}t = \int_B \rho_{0,\delta}\log \rho_{0,\delta}\ {\rm d}x - \int_B \rho(\tau,\cdot)\log \rho(\tau,\cdot)\ {\rm d}x
\end{equation}
for every $\tau \in [0,T]$. Lemma $\ref{evf.for.delta.t}$ and the monotonicity of $p_M$ yield $\overline{\rho\diver\vu}\geq \rho\diver\vu$ and therefore, with help of $\eqref{rho.logrho.delta}$ and $\eqref{rho.logrho}$
$$
\overline{\rho\log(\rho)} = \rho\log(\rho)
$$
Due to the convexity of $\rho\mapsto \rho\log\rho$ we have just deduced that
\begin{eqnarray}\label{aeconv2}
\rho_{\Delta t}\to \rho \quad \text{a.e. in } (0,T)\times B,
\end{eqnarray}
so $\eqref{aeconv1},\eqref{aeconv2}$ and $\eqref{wcconv}$ combined with the uniform estimates give rise to the following weak and strong convergences in $L^1((0,T)\times B)$
\begin{eqnarray*}
&&\vartheta_{\Delta t}^4 \to \vartheta^4,\quad \frac{\kappa_\eta^{w_{\Delta t}}(\vartheta_{\Delta t})\nabla \vartheta_{\Delta t}}{\vartheta_{\Delta t}} \rightharpoonup \frac{\kappa_\eta^{w}(\vartheta)\nabla \vartheta}{\vartheta}   \\[2mm]
&&p_{\eta,\delta}^{w_{\Delta t}}(\rho_{\Delta t},\vartheta_{\Delta t})\rightharpoonup p_{\eta,\delta}^{w}(\rho,\vartheta), \quad \mathbb{S}_\omega^{w_{\Delta t}}(\vartheta_{\Delta t},\nabla \bb{u}_{\Delta t}) \rightharpoonup \mathbb{S}_\omega^{w}(\vartheta,\nabla \bb{u})\\[2mm]
&&\rho_{\Delta t} s_\eta^{w_{\Delta t}}(\rho_{\Delta t}, \vartheta_{\Delta t})\to \rho s_\eta^{w}(\rho, \vartheta), \quad
\rho_{\Delta t} s_\eta^{w_{\Delta t}}(\rho_{\Delta t}, \vartheta_{\Delta t}) \bb{u}_{\Delta t}\rightharpoonup \rho s_\eta^w(\rho, \vartheta) \bb{u}.
\end{eqnarray*}

\subsection{Construction and convergence of approximate test functions}\label{secapptest}
Here, we construct an appropriate sequence of regular test functions that converge alongside the approximate solutions. Namely, test functions in Definition \ref{weaksolutionap1} (equation \eqref{coupledmomeq1})  depend on solution since they must satisfy condition $\psi\bb{n}=\gamma_{|\Gamma^w}\varphi$. Test functions for the approximate equations satisfy analogous condition on $\Gamma^{w_{\Delta t}}$, and therefore $(\varphi,\psi)$ is not admissible test function for the approximate equations. The solution is to construct a sequence of admissible test functions for the approximate equations, $(\varphi_{\Delta t},\psi_{\Delta t})$, that converges to the test for the extended problem, $(\varphi,\psi)$. Note that just simply taking $\psi_{\Delta t}(t,y) :=\varphi(t,w_{\Delta t}(t,y))\cdot\bb{n}$ will not working for the following reasons. First,  restriction of $\varphi$ on $\Gamma^{w_{\Delta t}}$ is not necessary just in $\bb{n}$ direction. Moreover, regularity properties of the restriction are inherited from $w_{\Delta t}$ which is not enough. Therefore, we must do a more subtle construction which uses the properties of the tubular neighbourhood.

We start with introducing the sequences of functions $\{a_{\Delta t}^1\}_{\Delta t}$, $\{a_{\Delta t}^2\}_{\Delta t}$, $\{b_{\Delta t}^1\}_{\Delta t}$ and $\{b_{\Delta t}^2\}_{\Delta t}$ in $C^\infty([0,T]\times\Gamma)$ such that
\begin{eqnarray*}
    m< a_{\Delta t}^1(t,y) <a_{\Delta t}^2(t,y)< w_{\Delta t}(t,y)<b_{\Delta t}^1(t,y)<b_{\Delta t}^2(t,y)<M, \quad \text{for all } (t,X)\in \Gamma_T,
\end{eqnarray*}
where $m,M$ are given in $\eqref{lifespan}$, and\footnote{Note that this is possible due to the strong convergence of $w_{\Delta t}$ in $C^{\frac{1}{4}}(\Gamma_T)$, at least for a suitable subsequence.}
\begin{eqnarray*}
    a_{\Delta t}^2(t,y)<w(t,y)<b_{\Delta t}^1(t,y), \quad  \text{for all } (t,y)\in \Gamma_T.
\end{eqnarray*}
Moreover,
\begin{eqnarray}
    a_{\Delta t}^1,~a_{\Delta t}^2,~ b_{\Delta t}^1, ~b_{\Delta t}^2 \to w \quad \text{ as }\Delta t\to 0, \quad \text{on } [0,T]\times \Gamma\label{convergences}
\end{eqnarray}
and
\begin{eqnarray}
      (a_{\Delta t}^2-a_{\Delta t}^1) \sim (w_{\Delta t}-a_{\Delta t}^1)  \sim (b_{\Delta t}^2-b_{\Delta t}^1) \sim (b_{\Delta t}^1-w_{\Delta t}),\quad  \label{distances}
\end{eqnarray}
where the notation 
\begin{eqnarray*}
    f\sim g \iff c_1g(t,X) \leq f(t,X)\leq c_2 g(t,X) \text{ for all } (t,X)\in [0,T]\times \Gamma, 
\end{eqnarray*}
for some given uniform constants $0<c_1<c_2$. Next, let $\{\phi_{\Delta t}\}_{\Delta t}$ be a sequence of cut-off function in $C_c^\infty([0,T]\times B)$ such that $0\leq \phi_{\Delta t} \leq 1$,
\begin{eqnarray*}
    &&\phi_{\Delta t}(t,X)=1, \quad \text{for all } X\in \Omega^{b_{\Delta t}^1}(t)\setminus \Omega^{a_{\Delta t}^2}(t) \text{ and } t\in [0,T]\\
    &&\phi_{\Delta t}(t,X)=0, \quad  \text{for all } X\in \big[B\setminus \Omega^{b_{\Delta t}^2}(t)\big]\cup \Omega^{a_{\Delta t}^2}(t) \text{ and } t\in [0,T], 
\end{eqnarray*}
where $\Omega^f(t)$ is the domain determined by function $f(t,\cdot)$ defined on $\Gamma$, and
\begin{eqnarray*}
    \max|\nabla^i  \phi_{\Delta t}| \leq \frac{C}{\min \big[ \min\limits_{(t,X)\in[0,T]\times \Gamma}( a_{\Delta t}^2-a_{\Delta t}^1)^i, \min\limits_{(t,X)\in[0,T]\times \Gamma}(b_{\Delta t}^2-b_{\Delta t}^1)^i  \big]},
\end{eqnarray*}
for $i\leq 3$. Note that this condition combined with $\eqref{distances}$ gives us
\begin{eqnarray}
    \max|\nabla^i  \phi_{\Delta t}| &\leq& \frac{C}{\min\big[ \min\limits_{(t,X)\in[0,T]\times \Gamma}(w_{\Delta t}- a_{\Delta t}^2)^i, \min\limits_{(t,X)\in[0,T]\times \Gamma}(b_{\Delta t}^1-w_{\Delta t})^i  \big]} \nonumber \\
    &\leq& \frac{C}{\max \big[ \max\limits_{(t,X)\in[0,T]\times \Gamma}(w_{\Delta t}- a_{\Delta t}^2)^i, \max\limits_{(t,X)\in[0,T]\times \Gamma}(b_{\Delta t}^1-w_{\Delta t})^i  \big]},    \label{derivativesphi}
\end{eqnarray}
for $i\leq 3$.\\ \\

\noindent The \textbf{approximate test functions} are defined as (see figure $\ref{test}$):
\begin{enumerate}
    \item \textbf{Renormalized continuity equation} - test function stays the same;
    \item \textbf{Coupled momentum equation} - $(\boldsymbol\varphi,\psi)$ are approximated with  $(\boldsymbol\varphi_{\Delta t},\psi)$, where
    \begin{eqnarray}
    \boldsymbol\varphi_{\Delta t}(t,X)=\boldsymbol\varphi(t,X)+\phi_{\Delta t}(t,X)(\psi(t,\pi(X))\bb{n}(\pi(X))- \boldsymbol\varphi(t,X) ), \label{momapptest}
\end{eqnarray}
    with $\pi$ being the argument projection onto $\Gamma$ given in $\eqref{p}$;
    \item \textbf{Coupled entropy balance} - test functions $(\varphi,\psi)$ are approximated with $(\varphi_{\Delta t},\psi)$, where
    \begin{eqnarray}
    \varphi_{\Delta t}(t,X):=\varphi(t,X)+\phi_{\Delta t}(t,X)(\psi(t,\pi(X))- \varphi(t,X) ), \label{entapptest}
\end{eqnarray}
\end{enumerate}

\begin{figure}[h!]
\centering\includegraphics[scale=0.21]{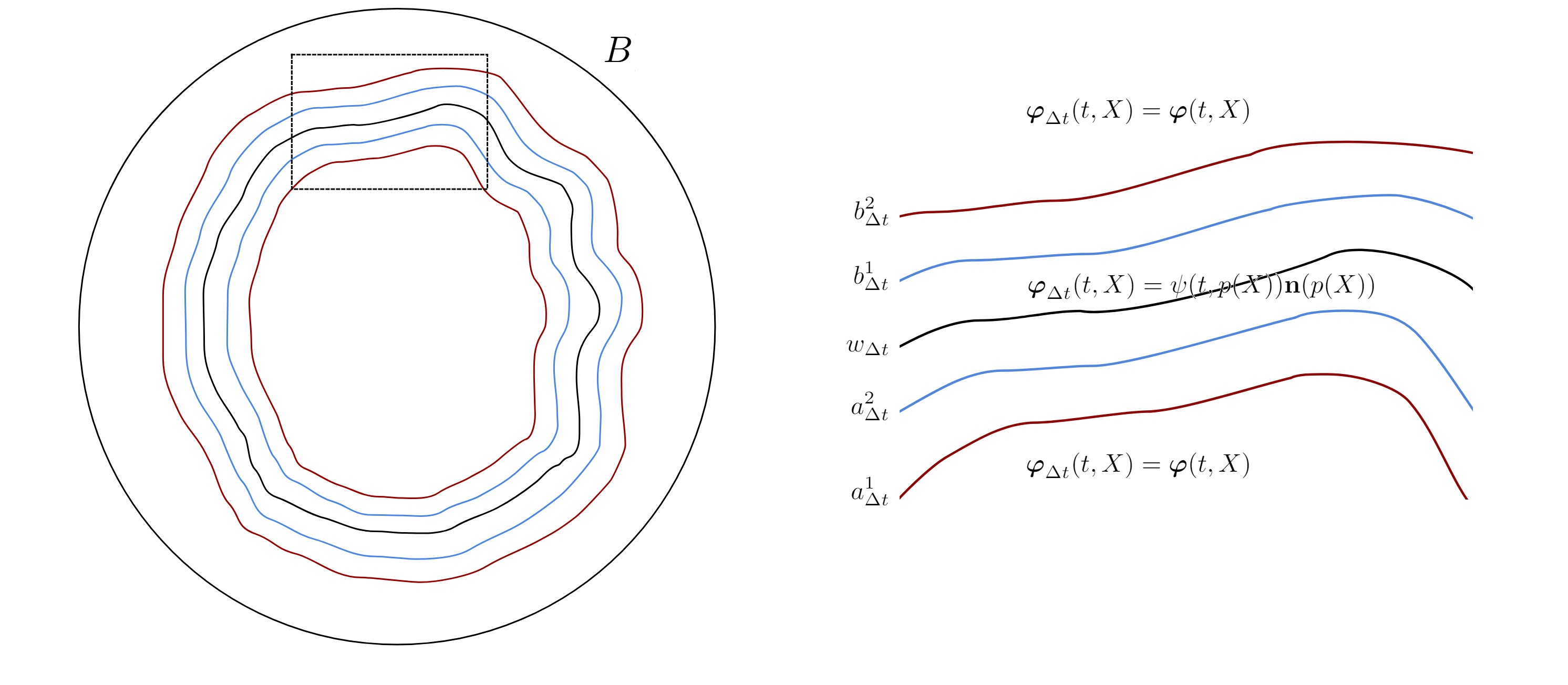}
\caption{The approximate test function $\boldsymbol\varphi_{\Delta t}$ corresponding to the coupled momentum equation. The graphs of functions on the right side are represented over $\Gamma$. Note that the strips are of proportional thickness due to $\eqref{distances}$.}
\label{test}
\end{figure}

${}$\\
\noindent One has the following:
\begin{lem}
    Let $\boldsymbol\varphi_{\Delta t}$ and $\varphi_{\Delta t}$ be defined in $\eqref{momapptest}$ and $\eqref{entapptest}$, respectively. Then
    \begin{eqnarray*}
        \boldsymbol\varphi_{\Delta t} \to \boldsymbol\varphi,\quad \quad \varphi_{\Delta t}\to \varphi, \quad  \text{ in } C^{3}([0,T]\times B)
    \end{eqnarray*}
    as $\Delta t$ tends to $0$.
\end{lem}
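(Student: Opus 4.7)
The plan is to observe that, by construction,
\begin{equation*}
\boldsymbol\varphi_{\Delta t} - \boldsymbol\varphi = \phi_{\Delta t}\,F, \qquad F(t,X) := \psi(t,\pi(X))\bb{n}(\pi(X)) - \boldsymbol\varphi(t,X),
\end{equation*}
and, analogously, $\varphi_{\Delta t} - \varphi = \phi_{\Delta t}\,\bigl(\psi(t,\pi(X)) - \varphi(t,X)\bigr)$. The first key point is that, owing to the admissibility constraint $\gamma_{|\Gamma^w}\boldsymbol\varphi = \psi\bb{n}$ imposed on the test functions for the extended problem, $F$ vanishes identically on the Eulerian interface $\Gamma^w_T$. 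In tubular coordinates $X = \varphi(y) + r\bb{n}(y)$, where $\pi(X)=y$ and $d(X)=r$, this yields the clean factorization
\begin{equation*}
F(t,y,r) \;=\; (w(t,y) - r)\,\widetilde F(t,y,r),\qquad \widetilde F(t,y,r) := \int_0^1 \partial_r\boldsymbol\varphi\bigl(t,\varphi(y)+(r+\tau(w-r))\bb{n}(y)\bigr)\,d\tau,
\end{equation*}
in which $\widetilde F$ and all its derivatives are bounded uniformly in $\Delta t$ since $\boldsymbol\varphi,\psi,w$ are fixed smooth data. An identical factorization applies to the scalar version used for $\varphi_{\Delta t}$.

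The next step is to exploit the vanishing factor $(w-r)$ against the support properties of $\phi_{\Delta t}$. On $\operatorname{supp}\phi_{\Delta t}$ one has $a^1_{\Delta t}(t,y)\le r\le b^2_{\Delta t}(t,y)$, so
\begin{equation*}
|w(t,y)-r| \;\le\; \delta_{\Delta t} := \max\bigl(\|w-a^1_{\Delta t}\|_{L^\infty(\Gamma_T)},\,\|b^2_{\Delta t}-w\|_{L^\infty(\Gamma_T)}\bigr) \longrightarrow 0
\end{equation*}
by $\eqref{convergences}$. Combined with the uniform bound $|\phi_{\Delta t}|\le 1$, this immediately gives the $C^0$ statement $\|\boldsymbol\varphi_{\Delta t}-\boldsymbol\varphi\|_{L^\infty([0,T]\times B)}\to 0$. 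Passing to a subsequence, the auxiliary sequences can additionally be chosen so that $\|w-w_{\Delta t}\|_{L^\infty} \ll \delta_{\Delta t}$; together with $\eqref{distances}$ this ensures that the transition width $\epsilon_{\Delta t}:=\min(a^2_{\Delta t}-a^1_{\Delta t},\,b^2_{\Delta t}-b^1_{\Delta t})$ governing $|\nabla^j\phi_{\Delta t}|\le C\epsilon_{\Delta t}^{-j}$ (from $\eqref{derivativesphi}$) is comparable to $\delta_{\Delta t}$, and that $|r-w|\le C\epsilon_{\Delta t}$ on the transition region.

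For the derivative bounds, apply the Leibniz rule
\begin{equation*}
\nabla^k(\phi_{\Delta t}F) \;=\; \sum_{j=0}^k \binom{k}{j}\,\nabla^j\phi_{\Delta t}\,\nabla^{k-j}F
\end{equation*}
and insert $F=(w-r)\widetilde F$. The term $j=0$ is $\phi_{\Delta t}\nabla^kF$: applying Leibniz once more inside, one of the pieces retains the full factor $(w-r)$, contributing $O(\delta_{\Delta t})$, and the remaining pieces, in which at least one derivative strips the $(w-r)$, are supported on $\operatorname{supp}\phi_{\Delta t}$ where they are bounded and controlled via the scale relation by $\delta_{\Delta t}$. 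For $j\ge 1$, $\nabla^j\phi_{\Delta t}$ is supported on the transition strip where $|w-r|\sim\epsilon_{\Delta t}$, and using $|\nabla^j\phi_{\Delta t}|\le C\epsilon_{\Delta t}^{-j}$ together with the inherited factor $\epsilon_{\Delta t}$ from $\nabla^{k-j}F$ produces terms of size $O(\epsilon_{\Delta t}^{1-j})$ which go to zero provided that, within the freedom allowed by $\eqref{convergences}$–$\eqref{distances}$, the sequences $a^i_{\Delta t},b^i_{\Delta t}$ are chosen along a sub-subsequence so that $\epsilon_{\Delta t}^{1-j}\to 0$ for $j\le 3$. Summing the Leibniz contributions for $k\le 3$ and taking the supremum over $(t,X)\in[0,T]\times B$ yields the claimed $C^3$-convergence; the analogous (simpler) computation with $\psi$ in place of $\psi\bb{n}$ gives the convergence of $\varphi_{\Delta t}$. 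The main obstacle, and the only delicate point, is ensuring that \emph{every} Leibniz term with $j\ge 1$ really inherits enough factors of $\epsilon_{\Delta t}$ to beat the $\epsilon_{\Delta t}^{-j}$ blow-up of $\nabla^j\phi_{\Delta t}$; this is where the bookkeeping based on the factorization $F=(w-r)\widetilde F$, the comparability relations $\eqref{distances}$, and the freedom to extract a subsequence along which $\|w-w_{\Delta t}\|_\infty$ is arbitrarily small compared to $\epsilon_{\Delta t}$ is essential.
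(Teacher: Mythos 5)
Your factorization $F=(w-r)\widetilde F$ in tubular coordinates is essentially the paper's mean-value argument, so the starting point is the same. The critical error is in your proposed fix for the higher Leibniz terms: you write that the sequences $a^i_{\Delta t},b^i_{\Delta t}$ can be chosen so that ``$\epsilon_{\Delta t}^{1-j}\to 0$ for $j\le 3$''. Since $\eqref{convergences}$ forces $\epsilon_{\Delta t}\to 0$, one has $\epsilon_{\Delta t}^{1-j}\to+\infty$ for $j=2,3$ (and $\epsilon_{\Delta t}^{1-j}=1$ for $j=1$), so this arrangement cannot be made along any subsequence. The scaling $|\nabla^j\phi_{\Delta t}||F|\sim\epsilon_{\Delta t}^{1-j}$ on the transition strips is genuine, and the single factor of $(w-r)$ coming from $F$ can only offset one power of $\epsilon_{\Delta t}^{-1}$; there is no higher-order vanishing of $F$ at $r=w$ to call upon. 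Your own last paragraph correctly identifies this as ``the only delicate point,'' but the resolution you propose does not exist.

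In fact the difficulty runs deeper and is not cured by a cleverer choice of subsequence: on the interface $\Gamma^w_T$ one has $\phi_{\Delta t}\equiv 1$ (because $a^2_{\Delta t}<w<b^1_{\Delta t}$) and $F=0$, so $\nabla\boldsymbol\varphi_{\Delta t}=\nabla\boldsymbol\varphi+\nabla F$ there, with $\nabla F\big|_{\Gamma^w_T}$ independent of $\Delta t$ and generically nonzero (its normal part is $-\partial_{\bb{n}(\pi(X))}\boldsymbol\varphi$). Hence $\|\boldsymbol\varphi_{\Delta t}-\boldsymbol\varphi\|_{C^1}$ does \emph{not} tend to zero, a fortiori the asserted $C^3$-convergence fails, and the paper's own final step (``$\nabla^i\boldsymbol\varphi_{\Delta t}(t,X)\to\nabla^i\boldsymbol\varphi(t,X)$ for all $(t,X)$'') is false precisely on $\Gamma^w_T$. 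Both your computation and the paper's establish the same \emph{correct and sufficient} facts via the same $\epsilon_{\Delta t}^{-1}\cdot\epsilon_{\Delta t}$ cancellation: $\|\boldsymbol\varphi_{\Delta t}-\boldsymbol\varphi\|_{C^0}\to 0$, a uniform $C^1$ bound on $\boldsymbol\varphi_{\Delta t}$, and $\nabla\boldsymbol\varphi_{\Delta t}=\nabla\boldsymbol\varphi$ outside the shrinking strip $\Omega^{b^2_{\Delta t}}(t)\setminus\Omega^{a^1_{\Delta t}}(t)$ (hence a.e.\ and $L^p$ convergence of the first derivatives). Since the weak formulations $\eqref{RCETM-Ap}$, $\eqref{coupledmomeq-Ap}$, $\eqref{coupledenteq-Ap}$ involve only first derivatives of the test functions, this is all that is needed; the $C^3$-convergence claim should be replaced by these weaker, provable statements rather than defended.
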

\begin{proof}
The proof is the same for both test functions, so we prove only for $\boldsymbol\varphi_{\Delta t}$. On $  [0,T]\times \big[\Omega^{b_{\Delta t}^2}(t)\setminus \Omega^{a_{\Delta t}^1}(t)\big]$, one has
\begin{eqnarray*}
   && |\nabla \boldsymbol\varphi_{\Delta t}|_\infty \leq  \big|\nabla \big[\phi_{\Delta t}(t,X)(\psi(t,\pi(X))\bb{n}(\pi(X))- \boldsymbol\varphi(t,X) ) \big] \big|_\infty +|\nabla\boldsymbol\varphi|_\infty\\[2mm]
   && \leq |\nabla\phi_{\Delta t}|_\infty |\psi(t,\pi(X))\bb{n}(\pi(X))- \boldsymbol\varphi(t,X) |_{\infty}+|\phi_{\Delta t}|_\infty |\nabla \big[(\psi(t,\pi(X))\bb{n}(\pi(X))-  \boldsymbol\varphi(t,X)\big] |_{\infty}+C\\[2mm]
    &&\leq \frac{C}{\max \big[ \max\limits_{(t,X)\in[0,T]\times \Gamma}(w_{\Delta t}- a_{\Delta t}^2), \max\limits_{(t,X)\in[0,T]\times \Gamma}(b_{\Delta t}^1-w_{\Delta t})  \big]} \\[2mm]
    &&\quad \times|\nabla \boldsymbol\varphi|_\infty\max \big[ \max\limits_{(t,X)\in[0,T]\times \Gamma}(w_{\Delta t}- a_{\Delta t}^2), \max\limits_{(t,X)\in[0,T]\times \Gamma}(b_{\Delta t}^2-b_{\Delta t}^1)  \big]+C \leq C, 
\end{eqnarray*}
by $\eqref{derivativesphi}$ and
\begin{eqnarray*}
    &&|\psi(t,\pi(X))\bb{n}(\pi(X))- \boldsymbol\varphi(t,X) | =|\boldsymbol\varphi(t,w(t,\pi(X)))- \boldsymbol\varphi(t,X) | \leq |\nabla \boldsymbol\varphi|_\infty |w(t,\pi(X))-X|_\infty\\
    &&\leq |\nabla \boldsymbol\varphi|_\infty \max \big[ \max\limits_{(t,X)\in[0,T]\times \Gamma}(w_{\Delta t}- a_{\Delta t}^1), \max\limits_{(t,X)\in[0,T]\times \Gamma}(b_{\Delta t}^2-w_{\Delta t})  \big]\\
    &&\leq C|\nabla \boldsymbol\varphi|_\infty \max \big[ \max\limits_{(t,X)\in[0,T]\times \Gamma}(w_{\Delta t}- a_{\Delta t}^2), \max\limits_{(t,X)\in[0,T]\times \Gamma}(b_{\Delta t}^1-w_{\Delta t})  \big],
\end{eqnarray*}
which follows by $\eqref{derivativesphi}$ the mean-value theorem. Moreover, since 
\begin{eqnarray*}
    \boldsymbol\varphi_{\Delta t} = \boldsymbol\varphi, \quad \text{on } [0,T]\times \Big(B\setminus \big[\Omega^{b_{\Delta t}^2}(t)\setminus \Omega^{a_{\Delta t}^1}(t)\big]\Big),
\end{eqnarray*}
we conclude
\begin{eqnarray*}
    |\nabla \boldsymbol\varphi_{\Delta t}|_\infty\leq C, \quad  \text{on } [0,T]\times B.
\end{eqnarray*}
Similarly, one can obtain $|\nabla^i \boldsymbol\varphi_{\Delta t}|_\infty \leq C$ for $i=2,3$, so $\boldsymbol\varphi_{\Delta t}$ is uniformly bounded in $C^3([0,T]\times B)$. Now, it is easy to see that
\begin{eqnarray*}
    \boldsymbol\varphi_{\Delta t}(t,X) \to \boldsymbol\varphi(t,X), \quad \nabla^i\boldsymbol\varphi_{\Delta t}(t,X) \to \nabla^i\boldsymbol\varphi(t,X),
\end{eqnarray*}
for all $(t,X)\in [0,T]\times B$ and $i=1,2,3$, directly by the construction of $\boldsymbol\varphi_{\Delta t}$, so by $\eqref{domainconv}$ and $\eqref{convergences}$ we conclude the desired convergences and finish the proof.
\end{proof}

Therefore, the above mentioned convergence properties of the sequence of approximate solutions $(\vartheta_{\Delta t}, \rho_{\Delta t}, \bb{u}_{\Delta t},w_{\Delta t}, \theta_{\Delta t})$ allow us to pass to the limit as $\Delta t \to 0$ in $\eqref{RCETM-Ap}, \eqref{coupledmomeq-Ap}$ and $\eqref{coupledenteq-Ap}$. Thus we obtain a solution to the extended problem and the main result of this section follows:
\begin{prop}\label{ext-thm}
Let $\beta \geq 4$, approximate parameters $\eta,\omega, \nu, \lambda,\delta>0$. The thermodynamical functions and coefficients satisfy \eqref{ext-mu}--\eqref{ext-e} with the hypotheses \eqref{mu_1}-\eqref{pm3}. Let the initial data satisfy \eqref{init-ap}--\eqref{init-ap1}. Then there exists a weak solution to the extended FSI-HEAT problem on $(0,T)\times B$ in the sense of Definition \ref{weaksolutionap1}. 
\end{prop}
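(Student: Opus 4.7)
The plan is to assemble the machinery developed throughout Section \ref{penalization} into a complete proof of the proposition. The starting point is the splitting scheme from Section \ref{splitting}: on each subinterval $[n\Delta t,(n+1)\Delta t]$ I would first solve (SSP) for $(w^{n+1},\theta^{n+1})$ via a Galerkin expansion in $w$ and $\theta$, reducing it to an ODE system handled by standard theory, with \eqref{SSPeneq} obtained by testing \eqref{heateqSSP} with $\theta^{n+1}$ and \eqref{plateeqSSP} with $\partial_t w^{n+1}$ and using the polarization identity $2(a-b)b=a^2-b^2-(a-b)^2$. Then, given $(w^{n+1},\theta^{n+1})$, I would produce $(\vartheta^{n+1},\rho^{n+1},\bb{u}^{n+1})$ by adapting \cite[Theorem 3.1]{heatfl}: at the highest level of the Feireisl-Novotn\'y approximation one replaces the entropy inequality with a regularized internal-energy equation carrying the mollified penalty \eqref{thatterm2}, solves the artificially damped continuity/momentum/internal-energy triplet by a Schauder-type fixed-point argument, then removes the mollification $\xi\to 0$ and the density damping to recover \eqref{entineqFSP}. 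Telescoping \eqref{SSPeneq} and \eqref{fspen} (respectively \eqref{dissineq}) over $n$ yields the global inequalities \eqref{decenineqtotal}--\eqref{decdissineqtotal}, whose uniformity in all approximation parameters permits inductive extension of $(\vartheta,\rho,\bb{u},w,\theta)$ to the whole interval $[0,T]$ with the lifespan ensured by \eqref{lifespan}.

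The second step is the splitting limit $\Delta t\to 0$ in \eqref{RCETM-Ap}, \eqref{coupledmomeq-Ap} and \eqref{coupledenteq-Ap}. All the weak and weak-$*$ compactness properties of the velocity, temperature, density and momentum are read off from the uniform bounds collected in the excerpt; strong convergence $w_{\Delta t}\to w$ in $C^{1/4}([0,T]\times\Gamma)$ gives \eqref{domainconv} and \eqref{chiconv}, and the discrete integration-by-parts on the penalty terms produces the correct time-derivative boundary contributions together with the initial data. The two pieces of compactness that drive the nonlinear convergences are (i) a.e.\ convergence of $\vartheta_{\Delta t}$, obtained from the local div-curl argument \eqref{divcurl} combined with the Young-measure inequalities \eqref{young} as in Section \ref{sec33}, and (ii) a.e.\ convergence of $\rho_{\Delta t}$, obtained via the effective viscous flux Lemma \ref{evf.for.delta.t} and the $\rho\log\rho$ identities \eqref{rho.logrho.delta}--\eqref{rho.logrho}, both carried out on compact subsets of $Q_T^w$. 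Because the admissible test functions in \eqref{coupledmomeq1} and \eqref{coupledenteq1} must satisfy $\psi\bb{n}=\gamma_{|\Gamma^w}\boldsymbol\varphi$, while the approximate equations impose this constraint on $\Gamma^{w_{\Delta t}}$, the approximate test functions $(\boldsymbol\varphi_{\Delta t},\psi)$ and $(\varphi_{\Delta t},\tilde\psi)$ constructed in Section \ref{secapptest} are inserted into the $\Delta t$-level identities; the $C^3$-convergence proved there transfers the passage to the limit to the target weak formulation.

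The main obstacle I expect is controlling the pressure near the moving interface. The weak convergences $p_{\eta,\delta}^{w_{\Delta t}}(\rho_{\Delta t},\vartheta_{\Delta t})\rightharpoonup p_{\eta,\delta}^{w}(\rho,\vartheta)$ are only available on compacts disjoint from $\Gamma^w$, so a priori a concentrated Radon measure could appear on the interface and be seen by test functions that do not vanish there. Lemma \ref{pressconc} is precisely designed to rule this out: it yields an exhaustion $A_\kappa\Subset (0,T)\times B$ disjoint from $(0,T)\times\Gamma^{w_{\Delta t}}$ on which all but a prescribed $\kappa$-fraction of the pressure mass is captured. Combined with the transition strip in the approximate test functions $(\boldsymbol\varphi_{\Delta t},\varphi_{\Delta t})$ -- which is localized between $a_{\Delta t}^1,a_{\Delta t}^2$ and $b_{\Delta t}^1,b_{\Delta t}^2$ and whose derivatives are controlled by \eqref{derivativesphi} -- this excludes concentration and gives convergence of $\int p_{\eta,\delta}^{w_{\Delta t}}(\rho_{\Delta t},\vartheta_{\Delta t})(\nabla\cdot\boldsymbol\varphi_{\Delta t})$ to the expected limit.

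It remains to verify the remaining items of Definition \ref{weaksolutionap1}. The renormalized continuity equation \eqref{RCE1} follows from the $\Delta t$-level version by the standard DiPerna-Lions regularization, using the $L^2$-integrability of $\nabla\bb{u}$ and the a.e.\ convergence of $\rho_{\Delta t}$. The kinematic coupling conditions $\partial_t w\bb{n}=\gamma_{|\Gamma^w}\bb{u}$ and $\vartheta=\gamma_{|\Gamma^w}\theta$ are an immediate consequence of \eqref{splittingbound} together with the continuity of the Lagrangian trace $\gamma_{|\Gamma^{w_{\Delta t}}}$. Finally, the energy inequality \eqref{ExtenededEiIn} is preserved in the limit by weak lower semicontinuity of each convex quadratic term on its left-hand side, combined with the strong convergences of the initial data \eqref{init-ap}--\eqref{init-ap1}, while all splitting penalty terms in \eqref{SSPeneq} and \eqref{fspen} disappear in the limit thanks to \eqref{splittingbound}. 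Collecting these ingredients yields a weak solution in the sense of Definition \ref{weaksolutionap1}, which completes the proof of Proposition \ref{ext-thm}.
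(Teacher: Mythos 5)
Your proposal is correct and follows essentially the same route as the paper's proof: the (SSP)/(FSP) time-marching scheme solved by Galerkin and the Feireisl--Novotn\'y machinery with the mollified penalty term, telescoped energy/dissipation bounds uniform in $\Delta t$, the pressure concentration Lemma~\ref{pressconc}, local div-curl and Young-measure arguments for $\vartheta_{\Delta t}$, the effective-viscous-flux/$\rho\log\rho$ argument for $\rho_{\Delta t}$, and the tubular-neighbourhood construction of admissible approximate test functions. The only ingredient you leave implicit is the vanishing-density Lemma~\ref{pressvanish} (which keeps $\rho$ and $p_M$ supported in $\Omega^w(t)$ throughout the construction), but this does not affect the correctness of the outline.
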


\subsection{Uniform bounds on the physical domain $Q_T^w$}\label{estph}
Before we proceed to the extension limit where the integrals outside of $Q_T^w$ vanish, let us write the uniform estimates (w.r.t to the penalization parameters) on $Q_T^w$:
\begin{eqnarray*}
 \text{ess} \sup\limits_{t\in (0,T)}\int_\Gamma\Big[ |\partial_t w|^2 + |\Delta w|^2 +  \alpha_2 |\partial_t \nabla w|^2+|\theta|^2\Big]&\leq& C\\[2mm]
  \int_0^T\int_\Gamma \alpha_1 |\partial_t \nabla w|^2 + |\nabla \theta|^2 &\leq& C,\\[2mm]
  \text{ess} \sup\limits_{t\in (0,T)}\int_{\Omega^w(t)}\Big[ \delta \rho^\beta(t) +  \rho|\bb{u}  |^2  \Big] &\leq& C, \\[2mm]
 \int_{Q_T^w} \Big[ |\nabla \log (\vartheta)|^2 + |\nabla \vartheta^{\frac{3}{2}}|^2 \Big] &\leq& C, \\[2mm]
 \text{ess} \sup\limits_{t\in(0,T)}\int_{\Omega^w(t)} \Big[ a \vartheta^4 + \rho^{\frac{5}{3}} \Big]\leq C\text{ ess} \sup\limits_{t\in(0,T)}\int_{\Omega^w(t)} \rho e(\rho,\vartheta)  &\leq& C, \\[2mm]
 || \vartheta^\gamma||_{L^2(0,T; H^1(\Omega^w(t) ))} +  ||\log \vartheta||_{L^2(0,T; H^1(\Omega^w(t) ))} &\leq& C, \quad \text{for } \gamma \in [1,3/2],\\[2mm]
 \int_{Q_T^w}\frac{1}{\vartheta}\Big( \mathbb{S}(\vartheta, \nabla \bb{u}):\nabla \bb{u}+\frac{\kappa(\vartheta)|\nabla \vartheta|^2}{\vartheta}\Big) &\leq& C, \\[2mm]
 ||\rho \bb{u}||_{L^\infty(0,T; L^\frac{5}{4}(\Omega^w(t)))} &\leq& C,
\end{eqnarray*}
Before obtaining the estimates for $\nabla \bb{u}$, recall that we no longer work on a smooth domain $B$, but rather on $\Omega^w(t)$ which is in general not Lipschitz. Therefore, we need to prove a corresponding uniform Korn inequality on H\"{o}lder domains:

\begin{lem}\label{ourkorn}(\textbf{Uniform Korn's inequality}.) Let $w\in H^2(\Gamma)$ with $a_\Gamma<w<b_\Gamma$ and $\Omega^w$ be the domain corresponding to the displacement $w$. Moreover, let $M,L>0$, $\gamma\in (\frac32,\infty]$ and $q\in [1,2)$. Then, there exists a positive constant $C=C\big(q,M,L,\gamma,||w||_{H^2(\Gamma)}\big)$ such that
\begin{eqnarray}\label{korn}
    ||\bb{u}||_{ W^{1,q}(\Omega^w)}^2 \leq C \Big[||  \nabla \bb{u} + \nabla^\tau \bb{u}||_{L^2(\Omega^w)}^2 + \int_{\Omega^w} \rho |\bb{u}|^2 \Big]
\end{eqnarray}
for any $\rho,\bb{u}$ such that the right-hand side is finite and
\begin{eqnarray*}
    \rho\geq 0 \text{ a.e. on } \Omega^w, \quad ||\rho||_{L^\gamma(\Omega^w)}\leq L, \quad \int_{\Omega^w} \rho \geq M.
\end{eqnarray*}
\end{lem}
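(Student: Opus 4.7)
The plan is a compactness and contradiction argument in which the pair ``zero weighted kinetic energy + positive density mass'' plays the role of eliminating the rigid-motion kernel of the symmetric gradient.

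\emph{Step 1 (Preparatory unweighted Korn on the H\"older domain).} I would first establish the unweighted estimate
\[
\|\bb{u}\|_{W^{1,q}(\Omega^w)} \leq C\bigl(\|\nabla\bb{u}+\nabla^\tau\bb{u}\|_{L^2(\Omega^w)} + \|\bb{u}\|_{L^2(\Omega^w)}\bigr), \qquad q<2,
\]
with constant depending only on $q$ and $\|w\|_{H^2(\Gamma)}$. Since $H^2(\Gamma)\hookrightarrow C^{0,\alpha}(\Gamma)$ for every $\alpha<1$, the domain $\Omega^w$ is H\"older with modulus controlled by $\|w\|_{H^2(\Gamma)}$, and one obtains the inequality by pulling back via $\tilde{\Phi}_w$ to the smooth reference configuration $\Omega$, where the standard Korn inequality is available; the $L^2\to L^q$ loss ($q<2$) appears precisely because the H\"older geometric terms only lie in some $L^p$ for $p$ strictly less than $\infty$. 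This is \cite[Corollary 2.9]{LenRuz}.

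\emph{Step 2 (Reduction and compactness).} Without loss of generality I take $q$ so close to $2$ that the Sobolev exponent $q^* = \tfrac{3q}{3-q}$ satisfies $q^*>\tfrac{2\gamma}{\gamma-1}$, which is possible because $\gamma>\tfrac32$ gives $\tfrac{2\gamma}{\gamma-1}<6$; for smaller $q$ the result follows from H\"older's inequality on the bounded domain $\Omega^w$. Assume by contradiction that no such $C$ exists, producing sequences $(\bb{u}_n,\rho_n)$ with
\[
\|\bb{u}_n\|_{W^{1,q}(\Omega^w)} = 1, \qquad \|\nabla\bb{u}_n+\nabla^\tau\bb{u}_n\|_{L^2(\Omega^w)}^2 + \int_{\Omega^w}\rho_n|\bb{u}_n|^2 \to 0.
\]
Passing to subsequences, Step 1 and Rellich--Kondrachov give $\bb{u}_n\rightharpoonup\bb{u}$ weakly in $W^{1,q}$ and strongly in $L^s$ for every $s<q^*$, while $\rho_n\stackrel{*}{\rightharpoonup}\rho$ in $L^\gamma$. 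The distributional limit yields $\nabla\bb{u}+\nabla^\tau\bb{u}=0$ on the connected domain $\Omega^w$, so $\bb{u}$ is a rigid motion, $\bb{u}(x)=a+Ax$ with $A$ skew-symmetric.

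\emph{Step 3 (Density mass forces $\bb{u}\equiv 0$).} Strong convergence $\bb{u}_n\to\bb{u}$ in $L^{2\gamma/(\gamma-1)}$ combined with the uniform $L^\gamma$ bound $\|\rho_n\|_{L^\gamma}\leq L$ and H\"older's inequality give
\[
\int_{\Omega^w}\rho_n|\bb{u}_n|^2 - \int_{\Omega^w}\rho_n|\bb{u}|^2 \to 0.
\]
Since $|\bb{u}|^2\in L^\infty$ (being an affine polynomial on the bounded $\Omega^w$), the weak-$*$ convergence of $\rho_n$ yields $\int\rho_n|\bb{u}|^2\to\int\rho|\bb{u}|^2$, hence $\int\rho|\bb{u}|^2=0$. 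Testing weak convergence against the constant $1$ preserves the mass bound $\int\rho\geq M>0$, so $\rho>0$ on a set of positive Lebesgue measure, on which $\bb{u}=0$. An affine vector field whose zero set has positive Lebesgue measure must vanish identically, so $\bb{u}\equiv 0$.

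\emph{Step 4 (Closing the contradiction).} With $\bb{u}_n\to 0$ strongly in $L^2$ and $\|\nabla\bb{u}_n+\nabla^\tau\bb{u}_n\|_{L^2}\to 0$, the unweighted Korn inequality of Step 1 forces $\|\bb{u}_n\|_{W^{1,q}(\Omega^w)}\to 0$, contradicting the normalization $\|\bb{u}_n\|_{W^{1,q}(\Omega^w)}=1$. The main obstacle in the whole argument is Step 1: getting Korn on the non-Lipschitz H\"older domain with $L^2$ on the symmetric-gradient side and $W^{1,q}$ on the velocity side. This is precisely what forces the restriction $q<2$, and explains why the constant blows up as $q\to 2^-$. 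The weighted step that upgrades to the statement of the lemma is then a routine compactness-contradiction once Step 1 is available.
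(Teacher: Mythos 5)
Your argument is correct as far as it goes, and your handling of the weighted term is in fact slightly cleaner than the paper's: you use directly that a field with vanishing symmetric gradient on a connected open set is an affine rigid motion, so vanishing on a set of positive Lebesgue measure forces it to vanish identically, whereas the paper routes through an elliptic equation and interior analyticity to reach the same conclusion. But there is a genuine gap in what your compactness--contradiction actually delivers. The lemma claims $C=C\big(q,M,L,\gamma,\|w\|_{H^2(\Gamma)}\big)$, which is a \emph{uniformity} statement: a single constant must work for every displacement sharing a common $H^2$ bound. This is exactly what is used in \eqref{nablau2}, where $\Omega^w(t)$ moves in time and the Korn constant has to be absorbed into a quantity depending only on $q$ and $E_0$. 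Your contradiction in Steps 2--4 keeps $w$, hence $\Omega^w$, fixed; it therefore only produces a constant associated with that specific domain, with no control on how it depends on $w$. Nothing in your argument rules out that this constant blows up along a bounded sequence in $H^2(\Gamma)$, so you have not proved the dependence that the statement asserts.

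The paper negates the full uniform statement, so the contradicting sequence is a triple $(r_n,\bb{v}_n,w_n)$ with $\|w_n\|_{H^2(\Gamma)}\le C$, and the domains $\Omega^{w_n}$ themselves vary. This is where the real difficulty lies, which your proposal brushes past as ``routine'': Rellich cannot be applied globally on a moving domain. The paper extracts a weak $H^2$ limit $w$ of $w_n$, uses the compact embedding $H^2(\Gamma)\hookrightarrow C^{0,\alpha}(\Gamma)$ to get $w_n\to w$ uniformly so that any compact ball $O\Subset\Omega^w$ eventually lies inside $\Omega^{w_n}$, runs the Rellich--Korn compactness locally on each such $O$, and then assembles the local strong convergences into global ones via the characteristic functions $\chi_{\Omega^{w_n}}$. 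This varying-domain passage is the one idea missing from your proof, and it is precisely what turns the fixed-$w$ statement you establish into the uniform estimate the paper actually needs.
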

\begin{proof}
First, from \cite[Proposition 2.9]{lengeler}, we have
\begin{eqnarray}\label{lengelerkorn}
     ||\bb{u}||_{ W^{1,q}(\Omega^w)}^2 \leq C_2\big(q,||w||_{H^2(\Gamma)}\big) \Big[||  \nabla \bb{u} + \nabla^\tau \bb{u}||_{L^2(\Omega^w)}^2 + \int_{\Omega^w(t)} |\bb{u}|^2 \Big],
\end{eqnarray}
so, to prove $\eqref{korn}$, we can follow the approach from \cite[Theorem 10.17]{feireislnovotny} (note that we have to be careful with the Korn constant, i.e. we need to ensure that the constant on right-hand side of $\eqref{korn}$ is uniform with respect to $||w||_{H^2(\Gamma)}$). Namely, we assume the opposite - there exists a sequence of functions $r_n, \bb{v}_n, w_n$ such that:
\begin{eqnarray*}
    &&||r_n||_{L^1(\Omega^{w_n})} \geq M, \quad ||r_n||_{L^\gamma(\Omega^{w_n})} \leq L,\\
    &&||\bb{v}_n||_{ W^{1,q}(\Omega^{w_n})}=1, \quad ||  \nabla \bb{v}_n + \nabla^\tau \bb{v}_n||_{L^2(\Omega^{w_n})}^2 + \int_{\Omega^{w_n}} r_n |\bb{v}_n|^2 \leq \frac{1}{n},\\
    &&||w_n||_{H^2(\Gamma)}\leq C.
\end{eqnarray*}
Denote the weak limit of $w_n$ in $H^2(\Gamma)$ as $w$, and $r$ and $\bb{v}$ the weak limits in the following sense
\begin{eqnarray*}
    &&\chi_{\Omega^{w_n}}\bb{v}_n \rightharpoonup \chi_{\Omega^w}\bb{v},~ \chi_{\Omega^{w_n}}\nabla \bb{v}_n \rightharpoonup \chi_{\Omega^w}\nabla \bb{v}, \quad \text{ in } L^q(\mathbb{R}^3),\\
    &&    \chi_{\Omega^{w_n}}r_n \rightharpoonup \chi_{\Omega^w}r, \quad \text{ in } L^\gamma(\mathbb{R}^3), \\
    && \chi_{\Omega^{w_n}} \to \chi_{\Omega^w}, \quad \text{ a.e in } \Omega^w.
\end{eqnarray*}
This in particular implies
\begin{eqnarray*}
    M \leq \lim\limits_{n\to \infty}\int_{\mathbb{R}^3} r_n \chi_{\Omega^{w_n}} = \int_{\mathbb{R}^3} r \chi_{\Omega^{w}} = \int_{\Omega^{w}} r.
\end{eqnarray*}
Now, for every compact ball $O\Subset \Omega^w$, there is a $i_{0}\in \mathbb{N}$ such that $O\Subset \Omega^{w_i}$ for all $i\geq i_0$ (or at least for a subsequence), so by the compact imbedding of $L^q(O)$ into $W^{1,q}(O)$, we have that $\bb{v}_n \to \bb{v}$ in $L^q(O)$ as $i_0\leq i\to \infty$. By plugging in $(\bb{v}-\bb{v}_n)$ into the inequality $\eqref{lengelerkorn}$ on $O$, we conclude that $\bb{v}_n \to \bb{v}$ in $W^{1,q}(O)$. Since $O$ was arbitrary, this gives us
\begin{eqnarray*}
    \chi_{\Omega^{w_n}}\bb{v}_n \to \chi_{\Omega^{w}}\bb{v},~ \chi_{\Omega^{w_n}}\nabla \bb{v}_n \to \chi_{\Omega^{w}} \nabla \bb{v}, \quad \text{ in } L^q(\mathbb{R}^3),
\end{eqnarray*}
and
\begin{eqnarray*}
    ||\bb{v}||_{ W^{1,q}(\Omega^w)}=1, \quad   \nabla \bb{v} + \nabla^\tau \bb{v} =0, \quad r |\bb{v}|^2=0,
\end{eqnarray*}
so $\bb{v}$ satisfies the following elliptic equation (at least in the distributional sense)
\begin{eqnarray*}
    \nabla \cdot \big(\nabla \bb{v} + \nabla^\tau \bb{v}\big) = \Delta \bb{v} + \nabla \cdot\big[(\nabla \cdot \bb{v})I\big] = 0, \quad \text{on } \Omega^w.
\end{eqnarray*}
Thus, we conclude that $\bb{v}$ is analytic inside $\Omega^w$. Now, $r|\bb{v}|^2=0$ on the set $\{x \in\Omega^w(t) : r(x) >0 \}$ of a positive measure, so $\bb{v}\equiv 0$ due to analyticity. This is a contradiction, so the proof is now finished.
\end{proof}

Now, it follows that
\begin{eqnarray}
   &&||\bb{u}||_{L^2(0,T; W^{1,q}(\Omega^w(t)))} \leq C\big(q,E_0\big) \int_0^T\Big[||  \nabla \bb{u} + \nabla^\tau \bb{u}||_{L^2(\Omega^w(t))}^2 + \int_{\Omega^w(t)} \rho|\bb{u}|^2 \Big]\nonumber\\
   &&\leq C(q,E_0,\underline{\mu},\underline{\zeta})\int_{Q_T^w}\frac{1}{\vartheta} \mathbb{S}(\vartheta, \nabla \bb{u}):\nabla \bb{u} + C(q,E_0,T)  ~\text{ess} \sup\limits_{t\in (0,T)}\int_{\Omega^w(t)} \rho|\bb{u}|^2  \leq C(q,E_0,T,\underline{\mu},\underline{\zeta}), \nonumber\\ \label{nablau2}
\end{eqnarray}
for any $q<2$, where we used the lower bounds\footnote{This is the point where we need strict positivity of $\zeta$.} for $\mu,\zeta$ given in $\eqref{mu_1},\eqref{eta_1}$. This gives us the uniform bounds
\begin{eqnarray*}
    &&||\bb{u}||_{L^2(0,T; L^p(\Omega^w(t)))} \leq C(p),\\
    &&||\rho \bb{u}\otimes \bb{u}||_{L^2(0,T; L^\frac{31}{30}(\Omega^w(t)))} \leq C,
\end{eqnarray*}
for any $p<6$. Next,
\begin{eqnarray*}
||\rho s(\rho,\eta)||_{L^p(Q_T^w)}+  ||\rho s(\rho,\eta)\bb{u}||_{L^q(Q_T^w)}+ \Big|\Big|\frac{\kappa(\vartheta)\nabla \vartheta}{\vartheta}\Big|\Big|_{L^r(Q_T^w)}\leq C, 
\end{eqnarray*}
for some $p,q,r>1$. An improved pressure estimate based on the Bogovskii operator
\begin{eqnarray*}
\int_O p(\rho,\vartheta)\rho^\epsilon + \delta \rho^{\beta+\epsilon} \leq C(O)
\end{eqnarray*}
holds for any $O \Subset [0,T]\times \Omega^w(t)$ which is Lipschitz in space, where $\epsilon\in (0,1/9)$ and does not depend on $\delta$. Moreover,
\begin{eqnarray*}
     a_{\partial\Omega}<m\leq w(t,X) \leq M < b_{\partial\Omega}, \quad \text{for all }(t,X)\in \Gamma_T.
\end{eqnarray*}
Finally, in order to have a.e. positivity of $\theta$, let us prove the following result:
\begin{lem}\label{lntrace}
Let $w\in H^2(\Gamma)$ with $a_\Gamma<w<b_\Gamma$ and $\Omega^w$ be the domain corresponding to the displacement $w$. Moreover, let $\vartheta \in H^1(\Omega^w)$ such that $\ln \vartheta \in H^1(\Omega^w)$. Then, one has
\begin{eqnarray*}
    \gamma_{|\Gamma^w} \ln(\vartheta) = \ln(\gamma_{|\Gamma^w}\vartheta).
\end{eqnarray*}
\end{lem}
\begin{proof}
Let $\varepsilon>0$ and denote $\ln_\varepsilon(x):=\ln(x+\varepsilon)$. We will first show that $\gamma_{|\Gamma^w} \ln_\varepsilon(\vartheta) = \ln_\varepsilon(\gamma_{|\Gamma^w}\vartheta)$. Let $\vartheta_n:=\iota_{\frac1n}(\vartheta)$, where $\iota_\omega$ represents mollification for $\omega>0$. Note that $\ln_\varepsilon(\vartheta_n)\in C(\Omega^w)\cap H^1(\Omega^w)$ and it is well-defined, since $\vartheta_n>0$. Now, one has $\vartheta_n|_{\Gamma^w}\circ\Phi_w\to \gamma_{|\Gamma^w} \vartheta$, in $H^{s}(\Gamma)$, for any $s<\frac12$, so
\begin{eqnarray*}
    \lim\limits_{n\to\infty} \ln_\varepsilon(\vartheta_n|_{\Gamma^w}\circ\Phi_w) = \ln_\varepsilon(\gamma_{|\Gamma^w}\vartheta), \quad \text{in } L^2(\Gamma),
\end{eqnarray*}
by Nemytskii's theorem. Next,
\begin{eqnarray*}
    \lim\limits_{n\to\infty} \ln_\varepsilon (\vartheta_n)|_{\Gamma^w}\circ\Phi_w=\lim\limits_{n\to\infty} \gamma_{|\Gamma^w}\ln_\varepsilon (\vartheta_n)=\lim\limits_{n\to\infty} \gamma_{|\Gamma^w}\big(\ln_\varepsilon (\vartheta_n) - \ln_\varepsilon(\vartheta)\big)+\gamma_{|\Gamma^w} \ln_\varepsilon(\vartheta)=\gamma_{|\Gamma^w}\ln_\varepsilon(\vartheta)
\end{eqnarray*}
due to strong convergence of $\vartheta_n$ in $H^1(\Omega^w)$ and continuity of the trace operator from $H^1(\Omega^w)$ to $H^s(\Gamma)$, for $s<1/2$. Now, since $\ln_\varepsilon(\vartheta_n|_{\Gamma^w})=\ln_\varepsilon(\vartheta_n)|_{\Gamma^w}$ by continuity of $\vartheta_n$, one obtains $\lim\limits_{n\to\infty}\ln_\varepsilon(\vartheta_n|_{\Gamma^w})\circ\Phi_w=\lim\limits_{n\to\infty}\ln_\varepsilon(\vartheta_n)|_{\Gamma^w}\circ\Phi_w$, so $\ln_\varepsilon(\gamma_{|\Gamma^w}\vartheta)=\gamma_{|\Gamma^w}\ln_\varepsilon(\vartheta)$. Therefore, $\{\ln(\gamma_{|\Gamma^w}\vartheta+\varepsilon)\}_{\varepsilon>0}$ forms a uniformly bounded set in $H^s(\Gamma)$, so $\ln(\gamma_{|\Gamma^w}\vartheta+\varepsilon)\to \ln \gamma_{|\Gamma^w}\vartheta$ in $L^2(\Gamma)$ as $\varepsilon\to 0$. Since $\gamma_{|\Gamma^w}\ln(\vartheta+\varepsilon)\to\gamma_{|\Gamma^w}\ln(\vartheta)$ in $L^2(\Gamma)$, by letting $\varepsilon \to 0$ in the identity $\ln_\varepsilon(\gamma_{|\Gamma^w}\vartheta)=\gamma_{|\Gamma^w}\ln_\varepsilon(\vartheta)$, we conclude the desired result.
\end{proof}

Therefore, we conclude that $\ln\theta=\gamma_{|\Gamma^w}\ln\vartheta$, so $\ln\theta\in L^2(0,T;H^s(\Gamma))$, $s<\frac12$. As a direct consequence, one then has $\theta>0$ a.e. on $\Gamma_T$.

\section{Step 2 - the extension limit}\label{penlimit}
Following the ideas of \cite{heatfl}, in this section we consider limit of penalization parameters $\eta,\omega,\nu,\lambda\to 0$. In order to avoid dealing with unidentified limiting functions (note that when $\eta\to 0$, we lose the compactness of $\vartheta$ on $(0,T)\times (B\setminus\Omega^w(t))$), we will let all these parameters go to $0$ simultaneously, however not at the same rate. We set\footnote{This relation is not optimal, but it is enough to pass to the limit.}
\begin{eqnarray}\label{apppar}
    \sqrt[8]{\eta} = \sqrt{\omega}=\sqrt{\nu}=\lambda=:k
\end{eqnarray}
and denote $(\vartheta_k,\rho_k,\bb{u}_k, w_k,\theta_k)$ the solution obtained in Section $\ref{penalization}$ (see Proposition \ref{ext-thm}). Limits of some terms will be omitted and only the most important ones are proved.

\begin{lem}
Let approximation parameters satisfy $\eqref{apppar}$ and let approximate test functions be constructed as in Section $\ref{secapptest}$. We have the following as $k\to 0$:
        \begin{eqnarray*}
            &&\int_0^T \int_{B\setminus \Omega^{w_k}(t)} a\eta \vartheta_k^4 \text{div } \varphi_k \to 0, \quad \quad  \int_0^T \int_{B\setminus \Omega^{w_k}(t)} a \eta \vartheta_k^3 \bb{u}_k \cdot \nabla \varphi_k \to 0;\\
             && \int_0^T \int_{B\setminus \Omega^{w_k}(t)} \mathbb{S}_k^{w_k} (\vartheta_k, \nabla \bb{u}_k):\nabla \boldsymbol\varphi_k \to 0; \\
          &&  \int_0^T \int_{B\setminus \Omega^{w_k}(t)} \frac{\kappa_k^{w_k}(\vartheta_k) \nabla \vartheta_k}{\vartheta_k} \cdot \nabla \varphi_k \to 0; \\
      && \lambda\int_0^T \int_{B\setminus \Omega^{w_k}(t)} \vartheta_k^4 \varphi_k \to 0.
\end{eqnarray*}
    
\end{lem}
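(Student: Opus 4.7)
The plan is to estimate each of the five integrals via a Hölder or weighted Cauchy--Schwarz inequality that isolates a factor vanishing with the extension parameter (coming from the smallness of $\eta,\nu,\lambda$ together with $\|f_\omega^{w_k}\|_{L^p}\le C\omega$ for $p\ge 5/3$) from a factor controlled by the uniform a priori estimates of Section~\ref{estph}; the scaling \eqref{apppar} will then yield vanishing in $k$. Throughout I use that $\|\nabla\varphi_k\|_{L^\infty}$ and $\|\varphi_k\|_{L^\infty}$ are bounded uniformly by the construction of Section~\ref{secapptest}, together with the uniform bound $\|\vartheta_k\|_{L^5((0,T)\times B)}^5\le C/\lambda$ from \eqref{vartheta5} and the uniform $L^1$-bound on $\vartheta_k^3$ inherited from $\vartheta_k^{3/2}\in L^2(0,T;H^1(B))\hookrightarrow L^2(0,T;L^6(B))$ via \eqref{varthetah1}.

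For the radiation-type integrals (terms 1 and 5), a direct Hölder pairing $L^{5/4}$--$L^5$ immediately gives
\begin{equation*}
\Big|\int\eta\,a\,\vartheta_k^4\,\mathrm{div}\,\varphi_k\Big|\le C\eta\|\vartheta_k^4\|_{L^{5/4}}\|\mathrm{div}\,\varphi_k\|_{L^5}\le C\eta/\lambda^{4/5}=Ck^{36/5}\to 0,
\end{equation*}
and similarly $\lambda\int\vartheta_k^4\varphi_k\le C\lambda^{1/5}=Ck^{1/5}\to 0$. Term 2 is handled in the same spirit: a Hölder pairing of $\vartheta_k^3$ (in a suitable $L^p$ controlled by the $L^5$-bound on $\vartheta_k$) with $\bb{u}_k$ (in the dual $L^{p'}$ via the $L^2(H^1_0)$-bound) produces a product of inverse powers of $\lambda,\omega$, which under \eqref{apppar} is dominated by the small prefactor $\eta=k^8$. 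For the entropy-flux term 4, Cauchy--Schwarz with weight $\kappa_\nu^{w_k}$ yields
\begin{equation*}
\Big|\int \frac{\kappa_\nu^{w_k}\nabla\vartheta_k}{\vartheta_k}\cdot\nabla\varphi_k\Big|\le C\Big(\int \kappa_\nu^{w_k}\frac{|\nabla\vartheta_k|^2}{\vartheta_k^2}\Big)^{1/2}\Big(\int \nu\kappa(\vartheta_k)\Big)^{1/2};
\end{equation*}
the first factor is $O(1)$ by \eqref{fluidentropy}, and the second is $O(\sqrt\nu)$ since $\int\kappa(\vartheta_k)\le C(1+\int\vartheta_k^3)\le C$ uniformly, so term 4 is $O(k)$.

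The principal obstacle is term 3, the viscous-stress contribution, because $\nabla\bb{u}_k$ is only controlled in $L^2$ with constant $O(1/\omega)$ and $\vartheta_k$ is unbounded in $L^4$ on the extended region. The key observation is that the weighted quantity $\int f_\omega^{w_k}|\nabla\bb{u}_k|^2$ is nevertheless uniformly bounded: indeed, by \eqref{fluidentropy} we have $\int\mu_\omega^{w_k}|\nabla\bb{u}_k|^2/\vartheta_k\le C$, and combined with the elementary inequality $\mu(\vartheta)/\vartheta\ge\underline{\mu}$ (valid for every $\vartheta>0$) this gives $\int f_\omega^{w_k}|\nabla\bb{u}_k|^2\le C/\underline{\mu}$. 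A weighted Cauchy--Schwarz with weight $f_\omega^{w_k}$ then reduces matters to
\begin{equation*}
\Big|\int\mathbb{S}_\omega^{w_k}\!:\!\nabla\boldsymbol\varphi_k\Big|\le C\Big(\int f_\omega^{w_k}(1+\vartheta_k)^2\Big)^{1/2}\Big(\int f_\omega^{w_k}|\nabla\bb{u}_k|^2\Big)^{1/2},
\end{equation*}
and the first factor, by Hölder with the $L^{5/3}$--$L^{5/2}$ pairing using $\|f_\omega^{w_k}\|_{L^{5/3}}\le C\omega$ and $\|\vartheta_k\|_{L^5}^2\le C/\lambda^{2/5}$, is bounded by $C(\omega+\omega/\lambda^{2/5})^{1/2}=O(k^{4/5})$ under \eqref{apppar}. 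Hence term 3 vanishes at rate $O(k^{4/5})$.
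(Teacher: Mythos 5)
Your overall strategy --- isolate a vanishing factor from the extension coefficients and pair it via H\"older/Cauchy--Schwarz with a factor controlled by the uniform a priori bounds --- is the same as the paper's, and terms 1 and 5 match the paper's computation exactly. Term 4 is a correct variant (the paper bounds $\int\kappa(\vt_k)$ by $C/\lambda$ using \eqref{vartheta5}, you bound it by $C$ using \eqref{varthetah1}; either closes). However, there are two genuine problems.

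\textbf{Term 3.} Your premise that \eqref{fluidentropy} gives $\int \mu_\omega^{w_k}|\nabla\bu_k|^2/\vt_k\le C$, hence $\int f_\omega^{w_k}|\nabla\bu_k|^2\le C$, is false. The dissipation term is $\mathbb{S}_\omega^{w}:\nabla\bu = 2\mu_\omega^{w}|D_0|^2 + \zeta_\omega^{w}(\diver\bu)^2$ where $D_0$ is the \emph{traceless symmetric} part of $\nabla\bu$; this is a degenerate quadratic form in $\nabla\bu$ whose kernel is the skew-symmetric matrices, so it controls only $|\mathrm{sym}\,\nabla\bu|^2$, never the full $|\nabla\bu|^2$ (passing to the full gradient requires Korn, which is not available pointwise with a degenerate weight). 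What the dissipation \emph{does} give, using $\mu,\zeta\ge c(1+\vt)$ and $|D|^2 = |D_0|^2 + \tfrac13(\diver\bu)^2$, is $\int f_\omega^{w_k}|\mathrm{sym}\,\nabla\bu_k|^2\le C$. Since $\mathbb{S}$ depends only on $\mathrm{sym}\,\nabla\bu$ and $|\mathbb{S}(\vt,\nabla\bu)|\le C(1+\vt)|\mathrm{sym}\,\nabla\bu|$, replacing $|\nabla\bu_k|$ by $|\mathrm{sym}\,\nabla\bu_k|$ throughout repairs the argument and the rest of the estimate and the rate $O(k^{4/5})$ go through. Once repaired, this is a genuinely different and arguably cleaner route than the paper's, which uses the algebraic inequality $|\mathbb{S}|^2 \le (\mathbb{S}:\nabla\bu)(\mu+\zeta)$ pointwise and then H\"older with exponents $10/9$, $\infty$, $10$ in one step; your two-step version makes the structure more transparent.

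\textbf{Term 2.} This is where the argument is too thin. You cannot simply pair $\vt_k^3$ (controlled by the $L^5$-bound \eqref{vartheta5}) against $\bu_k$ (controlled by the $L^2(H^1_0)$-bound \eqref{nablau}) and declare that the resulting negative powers of $\lambda,\omega$ are beaten by $\eta=k^8$: a naive H\"older pairing puts $\vt_k^3\in L^{5/3}_{t,x}$ against $\bu_k\in L^{5/2}_{t,x}$, but $L^2(0,T;L^6)$ does \emph{not} embed into $L^{5/2}_{t,x}$, and there is no energy control of $\bu_k$ outside $\Omega^{w_k}(t)$ (the density vanishes there by Lemma~\ref{pressvanish}, so $L^\infty_t$ integrability of $\bu_k$ is not available). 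The paper's proof closes this by interpolating $\vt_k^3$ between $L^{5/3}_{t,x}$ and $L^\infty(0,T;L^{4/3})$, the latter coming from the \emph{radiation energy} bound $a_\eta^{w_k}\vt_k^4\in L^\infty(0,T;L^1)$, i.e. $\|\vt_k^3\|_{L^\infty L^{4/3}(B\setminus\Omega^{w_k})}\le C\eta^{-3/4}$, to place $\vt_k^3$ in $L^2(0,T;L^{6/5})$ and pair against $\bu_k\in L^2(0,T;L^6)$; the $\eta$-factors then just barely balance to yield $C\sqrt\eta/(\omega\sqrt\lambda)=Ck^{3/2}$. Your sketch omits this extra ingredient, and without it the estimate does not close.
\end{document}
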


\begin{proof}
    First, one has
    \begin{eqnarray*}
        && \Big| \int_0^T \int_{B\setminus \Omega^{w_k}(t)} a\eta \vartheta_k^4 \text{div } \varphi_k \Big| \leq a \eta ||\vartheta_k^4||_{L^1((0,T)\times B)}|| \text{div} \varphi_k ||_{L^\infty((0,T)\times B)} \leq C a \eta  ||\vartheta_k||^{4}_{L^5((0,T)\times B)}\\
         &&\leq C \frac{\eta}{\lambda^{\frac{4}{5}}}= Ck^{\frac{36}5}\to 0,
\end{eqnarray*}
by $\eqref{vartheta5}$, and
\begin{eqnarray*}
        &&\Big| \int_0^T \int_{B\setminus \Omega^{w_k}(t)} a \eta \vartheta_k^3 \bb{u}_k \cdot \nabla \varphi_k \Big| \leq a \eta || \vartheta_k^3||_{L^\frac{6}{5}((0.T)\times B)} ||\bb{u}_k||_{L^2(0,T;L^6(B))}||\nabla \varphi_k||_{L^\infty((0,T)\times B)} \\
        &&\leq  C\frac{\eta}{\omega} || \vartheta_k^3||_{L^2(0,T; L^{\frac{40}{27}}(B\setminus\Omega^w(t)))} \leq C\frac{\sqrt{\eta} }{\omega} ||\vartheta^3||_{L^{\frac{5}{3}}((0,T)\times B)}^{\frac{1}{2}} \big(\eta||\vartheta^3||_{L^\infty(0,T; L^{\frac{4}{3}}(B\setminus\Omega^w(t)))} \big)^{\frac{1}{2}}\\
         &&\leq C \frac{\sqrt{\eta}}{\omega\sqrt{\lambda}}=Ck^{\frac{3}{2}} \to 0,
\end{eqnarray*}
by $\eqref{vartheta5},\eqref{varthetarho},\eqref{nablau}$ and the interpolation of Lebesgue spaces. Next
\begin{eqnarray}
    && \Big|\int_0^T \int_{(B\setminus \Omega^{w_k}(t))} \mathbb{S}_k^w (\vartheta_k, \nabla \bb{u}_k):\nabla \boldsymbol\varphi_k \Big|\leq \int_0^T \int_{(B\setminus \Omega^{w_k}(t))} \frac{1}{\sqrt{\vartheta_k}}|\mathbb{S}_k^w (\vartheta_k, \nabla \bb{u}_k):\nabla \boldsymbol\varphi_k| \sqrt{\vartheta_k}\nonumber\\
    && \leq  \Big(\int_0^T \int_{(B\setminus \Omega^{w_k}(t))} \Big|\frac{1}{\sqrt{\vartheta_k}}\mathbb{S}_k^w (\vartheta_k, \nabla \bb{u}_k)\Big|^{\frac{10}{9}}\Big)^{\frac{9}{10}} 
    ||\nabla \boldsymbol\varphi_k||_{L^\infty((0,T)\times B)}    \Big(  \int_0^T \int_{(B\setminus \Omega^{w_k}(t))}\vartheta_k^5 \Big)^{\frac{1}{10}}    \nonumber \\
    && \leq C \frac{1}{\lambda^{\frac{1}{10}}}  \Big(\int_0^T \int_{(B\setminus \Omega^{w_k}(t))} \Big|\frac{1}{\sqrt{\vartheta_k}}\sqrt{\big( \mathbb{S}_k^w (\vartheta, \nabla \bb{u}_k):\nabla \bb{u}_k\big)\big(\mu_k^w(\vartheta_k)+\zeta_k^w(\vartheta_k)}\big) \Big|^{\frac{10}{9}} \Big)^{\frac{9}{10}}\nonumber\\
    &&\leq C\frac{1}{\lambda^{\frac{1}{10}}}  \Big(\int_0^T \int_{(B\setminus \Omega^{w}(t))} \frac{1}{\vartheta_k} \mathbb{S}_k^w (\vartheta_k, \nabla \bb{u}_k):\nabla \bb{u}_k \Big)^{\frac{1}{2}} \Big(\int_0^T \int_{(B\setminus \Omega^{w_k}(t))} (\mu_k^w(\vartheta_k)+\zeta_k^w(\vartheta_k))^{\frac{5}{4}}\Big)^{\frac{2}{5}} \nonumber\\
    &&\leq C\frac{1}{\lambda^{\frac{1}{10}}}  \Big( ||\mu(\vartheta_k)||^{\frac 12}_{L^5((0,T)\times(B\setminus \Omega^{w}(t)) )}+||\zeta(\vartheta_k)||^{\frac 12}_{L^5((0,T)\times(B\setminus \Omega^{w}(t)) )}  \Big) ||f_k^{w_k}||^{\frac 12}_{L^{\frac{5}{3}}((0,T)\times(B\setminus \Omega^{w}(t)) )} \nonumber \\
    && \leq C \frac{\omega^{\frac 12}}{\lambda^{\frac{1}{5}}} = Ck^{\frac{3}{10}} \to  0, \nonumber
\end{eqnarray}
where we used $\eqref{mu_1},\eqref{eta_1},\eqref{fomega}, \eqref{vartheta5},\eqref{nablau}$ and
\begin{eqnarray*}
   |\mathbb{S}_k^w (\vartheta, \nabla \bb{u})|^2 \leq \big(\mathbb{S}_k^w (\vartheta, \nabla \bb{u}):\nabla \bb{u}\big)\big(\mu_k^w(\vartheta)+\zeta_k^w(\vartheta) \big).
\end{eqnarray*}
Now,
\begin{eqnarray*}
     &&\int_0^T \int_{B\setminus \Omega^{w_k}(t)} \frac{\kappa_k^{w_k}(\vartheta_k) \nabla \vartheta_k}{\vartheta_k} \cdot \nabla \varphi_k \\
     &&\leq \sqrt{\nu}\Bigg(\int_0^T \int_{B\setminus \Omega^{w_k}(t)} \frac{\kappa_k^{w_k}(\vartheta_k)|\nabla \vartheta_k|^2}{\vartheta_k^2}\Bigg)^{\frac{1}{2}} ||\nabla \varphi_k||_{L^\infty((0,T)\times B)}    \Bigg( \int_0^T \int_{B\setminus \Omega^{w_k}(t)}  \kappa(\vartheta_k) \Bigg)^{\frac{1}{2}} \\ 
     &&\leq C\frac{\sqrt{\nu}}{\sqrt{\lambda}} =C\sqrt{k}\to 0,
\end{eqnarray*}
by $\eqref{kappa_2},\eqref{kappa_3},\eqref{vartheta5},\eqref{fluidentropy}$ and finally,
\begin{eqnarray*}
     \int_0^T \int_{B\setminus \Omega^{w_k}(t)} \lambda \vartheta_k^4 \varphi_k \leq C\lambda^{\frac{1}{5}} \Big(\lambda \int_0^T \int_{B\setminus \Omega^{w_k}(t)}  \vartheta_k^5 \Big)^{\frac{4}{5}} \leq C \lambda^{\frac{1}{5}} =  Ck^{\frac{1}{5}} \to 0,
\end{eqnarray*}
so the proof is finished.

\end{proof}

Note that in the above limit, the convergences of all remaining terms on physical domain $Q_T^w$ can be proved in the same way as in Section $\ref{penalization}$, based on estimates given in Section $\ref{estph}$. Also, in the energy inequality, the following term
\begin{eqnarray*}
    \lambda \int_0^T \int_B \vartheta_\lambda^5  \geq 0
\end{eqnarray*}
and therefore it doesn't need to converge to $0$.

\begin{rem}
Compared to \cite{heatfl}, in our case there is less work to be done in the above limits. This is because we are working with a system with closed energy, while in \cite{heatfl} the moving domain velocity is given and acts as a source term, thus resulting in a modified energy inequality \cite[(3.15)]{heatfl}. The terms appearing on the right-hand side there need to be dealt with properly in their respective limits.\\
\end{rem}

\subsection{Limiting system and uniform bounds after the extension limit}

After passing to the limit, the limiting functions $(\vartheta,\rho,\bb{u},w,\theta)$ are a solution in the following sense:
\begin{mydef}\label{weaksolutionap3}(\textbf{Weak solution to the coupled problem with artificial pressure}.)
We say  that  $ (\vartheta,\rho ,\bb{u}, w, \theta)$ is a weak solution if the initial data satisfy the assumptions \eqref{init-ap}-\eqref{init-ap1} and

\begin{enumerate}
\item $(\vartheta,\rho,\bb{u},w,\theta)$ satisfy the same regularity properties as in Definition \ref{weaksolution}, and in addition $\rho \in L^\infty(0,T; L^\beta(\mathbb{R}^3))$.
\item The coupling conditions $\partial_t w  \bb{n} = \gamma_{|\Gamma^w} \mathbf{u}$ and $\vartheta =  \gamma_{|\Gamma^w} \theta$ hold on $\Gamma_T$.
\item The renormalized continuity equation
\begin{eqnarray}
 \int_{Q_T^w} \rho B(\rho)( \partial_t \varphi +\bb{u}\cdot \nabla \varphi) =\int_{Q_T^w} b(\rho)(\nabla\cdot \bb{u}) \varphi +\int_{\Omega^{w_0}} \rho_{0,\delta} B(\rho_{0,\delta}) \varphi(0,\cdot), \nonumber
\end{eqnarray}
holds for all $\varphi \in C_c^\infty([0,T)\times \overline{\Omega^w(t)})$ and any $b\in L^\infty\cap C[0,\infty)$ such that $b(0)=0$ with $B(\rho)=B(1)+\int_1^\rho \frac{b(z)}{z^2}dz$.
\item The coupled momentum equation
\begin{eqnarray}
    &&\int_{Q_T^w} \rho \bb{u} \cdot\partial_t \boldsymbol\varphi + \int_{Q_T^w}(\rho \bb{u} \otimes \bb{u}):\nabla\boldsymbol\varphi +\int_{Q_T^w} p_{\delta}(\rho,\vartheta) (\nabla \cdot \boldsymbol\varphi) -  \int_{Q_T^w} \mathbb{S}(\vartheta, \nabla\bb{u}): \nabla \boldsymbol\varphi\nonumber\\
    &&+\int_{\Gamma_T} \partial_t w \partial_t \psi - \int_{\Gamma_T}\Delta w \Delta \psi-\alpha_1 \bint_{\Gamma_T} \partial_t \nabla w \cdot \nabla \psi +  \alpha_2 \bint_{\Gamma_T}\partial_t \nabla w\cdot \partial_t \nabla w  +\int_{\Gamma_T} \nabla \theta \cdot \nabla \psi  \nonumber\\
    &&= - \int_{\Omega^{w_0}}(\rho\mathbf{u})_{0,\delta}\cdot\boldsymbol\varphi(0,\cdot)- \int_\Gamma v_0 \psi- \alpha_2  \int_\Gamma  \nabla v_0 \cdot \nabla\psi(0,\cdot)  \nonumber
\end{eqnarray}
holds for all $\boldsymbol\varphi \in C_c^\infty([0,T)\times B)$ and $\psi\in C_c^\infty([0,T)\times\Gamma)$ such that $\psi \bb{n} =\gamma_{|\Gamma^{w}}\boldsymbol\varphi$ on $\Gamma_T$.
\item The coupled entropy balance
\begin{eqnarray}
    &&\int_{Q_T^w} \rho s( \partial_t \varphi + \bb{u}\cdot \nabla \varphi) -\int_{Q_T^w}\frac{\kappa(\vartheta) \nabla \vartheta \cdot \nabla \varphi}{\vartheta}+ \langle \sigma; \varphi \rangle_{[\mathcal{M},C]([0,T]\times \overline{\Omega^w(t)})} \nonumber\\
    &&+\int_{\Gamma_T} \theta \partial_t \tilde{\psi} -\int_{\Gamma_T} \nabla\theta \cdot \nabla \tilde{\psi} +\int_{\Gamma_T} \nabla w \cdot \nabla \partial_t \tilde{\psi}  \nonumber\\
    &&= -\int_{\Omega^{w_0}} \rho_{0,\delta} s(\vartheta_{0,\delta},\rho_{0,\delta}) \varphi (0,\cdot)-  \int_{\Gamma} \theta_0 \tilde{\psi}(0,\cdot) - \int_{\Gamma} \nabla w_0 \cdot \nabla\tilde{\psi}(0,\cdot)  \nonumber
\end{eqnarray}
holds for all $\varphi \in C_c^\infty([0,T)\times B)$ and $\tilde{\psi}\in C_c^\infty([0,T)\times\Gamma)$ such that $\tilde{\psi} =\gamma_{|\Gamma^{w}}\varphi$ on $\Gamma_T$, where
\begin{eqnarray*}
      \sigma\geq \frac{1}{\vartheta}\Big( \mathbb{S}(\vartheta, \nabla \bb{u}):\nabla \bb{u}+\frac{\kappa(\vartheta)|\nabla \vartheta|^2}{\vartheta}\Big).
\end{eqnarray*}

\item The energy inequality 
\begin{eqnarray}
    &&\displaystyle{\int_{B} \Big( \frac{1}{2} \rho |\bb{u}|^2 + \rho e(\rho,\vartheta)+\frac{\delta}{\beta-1}\rho^{\beta} \Big)(t)}  \nonumber \\
    &&\quad+\frac{1-\delta}{2}|| \partial_t w(t)||_{L^2(\Gamma)}^2 +\frac{1}{2}|| \Delta w(t)||_{L^2(\Gamma)}^2 +\frac{ \alpha_2}{2}||\nabla \partial_t w(t)||_{L^2(\Gamma)}^2\nonumber \\
    &&\quad + \frac{1-\delta}{2}||\theta(t)||_{L^2(\Gamma)}^2+ \int_{0}^t \int_{\Gamma}\big(\alpha_1 |\nabla \partial_t w|^2+ | \nabla \theta|^2 \big) \nonumber \\
    &&\leq \displaystyle{\int_{B} \Big( \frac{1}{2 \rho_{0,\delta }} |(\rho\bb{u})_{0,\delta }|^2 + \rho_{0,\delta } e(\vartheta_{0,\delta },\rho_{0,\delta })}+\frac{\delta}{\beta-1}\rho_{0,\delta }^{\beta} \Big) \nonumber\\
    &&\quad+ \frac{1}{2}|| v_{0,\delta }||_{L^2(\Gamma)}^2+\frac{1}{2}|| \Delta w_{0,\delta }||_{L^2(\Gamma)}^2 + \frac{ \alpha_2}{2}||\nabla v_{0,\delta}||^2+ \frac{1}{2}||\theta_{0,\delta }||_{L^2(\Gamma)}^2, \nonumber
\end{eqnarray}
holds for any $t\in (0,T]$.

\end{enumerate}
\end{mydef}

Note that the limiting functions also satisfy the bounds given in Section $\ref{estph}$.


\section{Step 3 - Pressure regularization limit $\delta\to 0$}\label{sec5}
In this section, we consider the limit as pressure regularization parameter $\delta\to 0$. Throughout this section $(\vt_\delta, \rho_\delta, \vu_\delta, w_\delta, \theta_\delta)$ denotes the solution as stated in Definition \ref{weaksolutionap3} for certain $\delta>0$. The convergences here which are the same as in previous sections are omitted, and we focus on the ones that are different -- the strong convergence of fluid density and fluid temperature.

\subsection{Pointwise convergence of temperature}
The convergence of the temperature is solved similarly to Section \ref{sec33}. In particular, we use the div-curl lemma in order to deduce 
\begin{equation}\label{vartheta.konvergence}
\overline{\vartheta^4} = \overline{\vartheta^3}\vartheta.
\end{equation}
Now let $\nu_{t,x}$ be a Young measure related to $\{\vartheta_{\delta}\}_{\delta>0}$. Then \eqref{vartheta.konvergence} can be reformulated as
$$
\int_{[0,\infty)} s^4\ {\rm d}\nu_{t,x}(s) = \int_{[0,\infty)} s^3\ {\rm d}\nu_{t,x}(s)\vartheta
$$
which yields
$$
\int_{[0,\infty)} s^4 - s^3\vartheta - \vartheta^3s + \vartheta^4\ {\rm d}\nu_{t,x}(s) = 0
$$
and
$$
\int_{[0,\infty)} (s-\vartheta)(s^3 - \vartheta^3)\ {\rm d}\nu_{t,x}(s) = 0.
$$
Since the integrand is positive everywhere up to $s=\vartheta$, we get $\nu_{t,x}$ is a Dirac mass supported  in $\vartheta(t,x)$ and the point-wise convergence follows.

\subsection{Pointwise convergence of density}
We establish a family of smooth concave functions
$$
T_k(z) = T\left(\frac zk\right),\ T(z) = \left\{\begin{array}{l}z,\ \mbox{for }z\in (0,1)\\ 2,\ \mbox{for }z\geq 3,\\ \mbox{concave otherwise}\end{array}\right.
$$
Similarly to the previous section, we use the convergence of the effective viscous flux:

\begin{lem}\label{evf}
The equality
\begin{multline*}
\lim_{\delta \to 0} \int_{Q^w_T} \varphi \left(p_M(\rho_\delta,\vt_\delta) - \left(\frac 43 \mu(\vt_\delta) + \eta(\vt_\delta)\right)\diver \vu_\delta\right) T_k(\rho_\delta)\ {\rm d}x{\rm d}t\\ = \int_{Q^w_T} \varphi \left(\overline{p_M(\rho,\vt)}  - \overline{\left(\frac 43 \mu(\vt) + \eta(\vt)\right)\diver \vu}\right) \overline{T_k(\rho)}\ {\rm d}x{\rm d}t
\end{multline*}
holds for all $\varphi \in C_c^\infty(Q^w_T)$.
\end{lem}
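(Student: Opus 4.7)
The plan is to follow the classical Lions--Feireisl proof of the weak convergence of the effective viscous flux, adapted to the present moving-domain setting by a localization argument. Since $\varphi\in C_c^\infty(Q_T^w)$, I may fix an open set $U$ with $\mathrm{supp}(\varphi)\Subset U\Subset Q_T^w$, so for the purpose of this identity the compressible Navier--Stokes part effectively lives on an interior subdomain disjoint from $\Gamma^w(t)$. This is the key simplification that makes the coupling with the shell invisible at this step: no boundary terms on $\Gamma^w$ will ever be produced, and no admissibility constraint $\gamma_{|\Gamma^w}\boldsymbol\varphi=\psi\bb{n}$ needs to be enforced beyond $\psi\equiv 0$.

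First I would use the renormalized continuity equation from Definition~\ref{weaksolutionap3} with a regularization of $b(\rho)=\rho T_k'(\rho)-T_k(\rho)$ (this is the standard application of DiPerna--Lions regularization by convolution, valid because $\rho_\delta\in L^\infty L^{5/3}$ and $\vu_\delta\in L^2W^{1,q}$ with $q<2$ from Section~\ref{estph}) to obtain
\begin{equation*}
\partial_t T_k(\rho_\delta)+\diver\bigl(T_k(\rho_\delta)\vu_\delta\bigr)+\bigl(T_k'(\rho_\delta)\rho_\delta-T_k(\rho_\delta)\bigr)\diver\vu_\delta=0
\end{equation*}
in the sense of distributions on $Q_T^w$, and then pass to the limit $\delta\to 0$ to deduce the analogous transport identity for the weak limit $\overline{T_k(\rho)}$ (with a weak-limit defect on the compression term). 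These two identities will be used to rewrite the time derivative produced by the Bogovskii-type test function.

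Next, into the coupled momentum equation I would plug the test function
\begin{equation*}
\boldsymbol\phi_\delta(t,x)=\psi(t)\,\varphi(x)\,\nabla\Delta_x^{-1}\bigl[\chi_U\,T_k(\rho_\delta)\bigr](t,x),
\end{equation*}
with $\psi\in C_c^\infty(0,T)$ and $\chi_U$ a smooth spatial cutoff equal to one on a neighbourhood of $\mathrm{supp}(\varphi)$ and supported in $U$; the corresponding test function with $\overline{T_k(\rho)}$ in place of $T_k(\rho_\delta)$ is used in the limit momentum equation. Subtracting the two identities, the pressure--divergence term produces exactly the left- and right-hand sides of the claimed equality, while the remaining terms reduce to showing that the commutator
\begin{equation*}
\int_{Q_T^w}\psi\,\varphi\,\Bigl[(\rho_\delta\vu_\delta)\cdot\nabla\Delta^{-1}(\chi_U T_k(\rho_\delta))-\rho_\delta\vu_\delta\otimes\vu_\delta:\nabla^2\Delta^{-1}(\chi_U T_k(\rho_\delta))\Bigr]
\end{equation*}
converges to its analogue with $\overline{T_k(\rho)}$. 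This is achieved by the classical Feireisl commutator identity combined with the compact embedding $L^{5/3}(\mathbb{R}^3)\hookrightarrow W^{-1,2}(\mathbb{R}^3)$, which upgrades the weak convergence $\chi_U T_k(\rho_\delta)\rightharpoonup\chi_U\overline{T_k(\rho)}$ to strong convergence of $\nabla\Delta^{-1}(\chi_U T_k(\rho_\delta))$ in $L^\infty_tL^2_x$ via Aubin--Lions applied to the transport identity above.

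The main obstacle is not in the interior algebra, which is essentially \cite[Section~3.6.5]{feireislnovotny}, but in checking that the localization is compatible with the low regularity of $\Gamma^w(t)$ and that no concentration at the moving interface (recall Lemma~\ref{pressconc}) contaminates the argument. This is precisely why the hypothesis $\varphi\in C_c^\infty(Q_T^w)$ is imposed: the cutoff screens off the pressure concentration, the Riesz operator $\nabla\Delta^{-1}$ acts on $\chi_U T_k(\rho_\delta)$, which is uniformly bounded in every $L^p(\mathbb{R}^3)$ by the truncation, and the time derivative of $\nabla\Delta^{-1}(\chi_U T_k(\rho_\delta))$ is controlled by the transport identity uniformly in $\delta$. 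Once this reduction is carried out, the proof reduces to the fixed-domain case and no further novelty is required, which is why the authors refer the reader to \cite{feireislnovotny}.
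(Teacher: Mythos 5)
Your proposal is correct and follows the same Feireisl--Novotn\'y effective-viscous-flux argument that the paper invokes: testing the momentum equation with $\varphi\nabla\Delta^{-1}T_k(\rho_\delta)$ (with a compactly supported cutoff to keep everything away from $\Gamma^w(t)$, which makes the coupling with the shell invisible), using the renormalized continuity equation with $b(\rho)=\rho T_k'(\rho)-T_k(\rho)$, and closing via the commutator/compactness machinery of \cite[Section 3.6.5]{feireislnovotny}. You have spelled out more of the mechanism than the paper's one-line proof, but the route is the same; the only small imprecisions are that the embedding $L^{5/3}\hookrightarrow W^{-1,2}$ is compact on a bounded set rather than on all of $\mathbb{R}^3$ (harmless here because of the cutoff), and that the $L^\infty_t L^2_x$ strong convergence is really an Arzel\`a--Ascoli-type statement obtained from $C_w([0,T];L^p)$ convergence of $T_k(\rho_\delta)$ composed with the compact Riesz operator rather than a direct Aubin--Lions application.
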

\begin{proof}
The proof of this lemma relies on testing the momentum equation by $\varphi \nabla\Delta^{-1}(T_k(\rho_\delta))$. Since it does not differ from the proof of \cite[(3.324)]{feireislnovotny}, we omit any further details and we refer interested reader to this book.
\end{proof}

Assume that $\rho, \vu$ and $\rho_\delta, \vu_\delta$ solve the renormalized continuity equation, i.e.,
\begin{equation}\label{renorm.delta}
\partial_t b(\rho) + \diver (b(\rho)\vu) + (b'(\rho)\rho - b(\rho))\diver \vu  = 0
\end{equation}
holds in a weak sense for any $b\in C^1(\mathbb R)$ with $b'(z) = 0$ for $z$ sufficiently large. 

We  introduce functions
$$
L_k (z) = \left\{\begin{array}{l}z\log z\ \mbox{for }0\leq z \leq k,\\ z\log k + z\int_k^z\frac 1{s^2}T_k(s)\ {\rm d}s\ \mbox{for }s>k\end{array}\right.
$$
and we use them as $b$ in \eqref{renorm.delta}. We obtain
\begin{equation}\label{delta.renorm}
\partial_t L_k(\rho_\delta) + \diver (L_k(\rho_\delta)\vu_\delta) + T_k(\rho_\delta)\diver \vu_\delta = 0
\end{equation}
and
$$
\partial_t L_k(\rho) + \diver (L_k(\rho)\vu) + T_k(\rho)\diver \vu = 0.
$$
We pass to the limit in \eqref{delta.renorm}. In what follows $\Phi_n$ is a smooth function with compact support such that $\Phi_n\to 1$ pointwisely. We obtain with help of Lemma \ref{evf}
\begin{multline*}
\int_{\Omega^w_\tau} L_k(\rho(\tau,\cdot)) - L_k(\rho_\delta(\tau,\cdot))\ {\rm d}x\geq \int_{Q^w_\tau}T_k(\rho_\delta)\diver \vu_\delta - T_k(\rho)\diver \vu\ {\rm d}x{\rm d}t\\
=\int_{Q^w_\tau}\Phi_n\left(T_k(\rho_\delta)\diver \vu_\delta -\frac 1{\frac 43\mu(\theta) + \lambda(\theta)}p_M(\rho_\delta,\theta_\delta)T_k(\rho_\delta)\right.\\
 + \left.\frac 1{\frac 43\mu(\theta) + \lambda(\theta)}p_M(\rho_\delta,\theta_\delta)T_k(\rho_\delta) - T_k(\rho)\diver \vu\right) \ {\rm d}x{\rm d}t\\
 + \int_{Q^w_\tau} (1-\Phi_n)\left(T_k(\rho_\delta)\diver \vu_\delta - T_k(\rho)\diver \vu\right)\ {\rm d}x{\rm d}t \geq 0 + c(n)
\end{multline*}
where $c(n)$ tends to $0$. As a consequence, 
$$
\int_{\Omega^w_\tau}L_k(\rho(\tau,\cdot)) - \overline{L_k(\rho(\tau,\cdot))}\ {\rm d}x\geq 0
$$
for almost all $\tau\in [0,T]$. We send $k\to \infty$ to deduce $\rho\log\rho = \overline{\rho\log\rho}$ which yields $\rho_\delta\to \rho$ almost everywhere.

Now it suffices to prove that the renormalized continuity equation \eqref{renorm.delta} is true for $\rho$ and $\vu$ (recall that it holds for $\rho_\delta$ and $\vu_\delta$). According to \cite[Lemma 3.8]{feireislnovotny} it is enough to show that
\begin{equation}\label{what.we.want}
\mbox{osc}_q[\rho_\delta\to \rho](Q^w_T) <\infty
\end{equation}
for some $q>2$ where
$$
\mbox{osc}_q[\rho_\delta\to\rho](Q^w_T):= \sup_{k\geq 1} \left(\lim\sup_{\delta\to 0} \int_{Q^w_T}\left|T_k(\rho_\delta) - T_k(\rho)\right|^q\ {\rm d}x{\rm d}t\right).
$$
We have
\begin{multline}\label{the.first.one}
\int_{Q^w_T} |T_k(\rho_\delta) - T_k(\rho)|^q\ {\rm d}x{\rm d}t = \int_{Q^w_T} (1+\vartheta)^{-3q/8}(1+\vartheta)^{3q/8} |T_k(\rho_\delta) - T_k(\rho)|^q \ {\rm d}x{\rm d}t\\ 
\leq c\left(\int_{Q^w_T} (1+\vartheta)^{-1} |T_k(\rho_\delta) - T_k(\rho)|^{8/3}\ {\rm d}x{\rm d}t + \int_{Q^w_T} (1+\vartheta)^{3q/(8-3q)}\ {\rm d}x{\rm d}t\right)
\end{multline}
for some $q$ determined later. The second integral on the right hand side is finite assuming $q\leq \frac{32}{15}$. 

In order to deduce the estimate of the first term we recall that
\begin{equation*}
\int_{Q^w_T} \varphi \overline{|T_k(\rho_\delta) - T_k(\rho)|^{8/3}}\ {\rm d}x{\rm d}t\leq c\int_{Q^w_T} \varphi \left(1 + \overline{p_M(\rho,\vartheta)T_k(\rho)} - \overline{p_M(\rho,\vartheta)}\overline{T_k(\rho)}\right)\ {\rm d}x{\rm d}t
\end{equation*}
for every $\varphi \in C^\infty_c((0,T)\times \Omega),\ \varphi \geq 0$. We use Lemma \ref{evf} to obtain
\begin{multline}\label{the.second.one}
\int_{Q^w_T} (1+\vartheta)^{-1} \overline{|T_k(\rho_\delta) - T_k(\rho)|^{8/3}}\ {\rm d}x{\rm d}t\\
 \leq c\int_{Q^w_T} (1+\vartheta)^{-1}\left(1 + \left(\frac 43 \mu(\vartheta) + \eta(\vartheta)\right)(\overline{\diver \vu T_k(\rho)} - \diver \vu \overline{T_k(\rho)})\right) \ {\rm d}x{\rm d}t\\
\leq c \left(1 + \sup_{\delta>0} \|\diver \vu_\delta\|_{L^2(Q^w_T)} \lim\sup_{\delta\to0} \|T_k(\rho_\delta) - T_k(\rho)\|_{L^2(Q^w_T)}\right)\\
\leq c \left(1 + \lim\sup_{\delta\to0} \|T_k(\rho_\delta) - T_k(\rho)\|_{L^2(Q^w_T)}\right).
\end{multline}
The demanded boundedness \eqref{what.we.want} then follows from \eqref{the.first.one} and \eqref{the.second.one}.

\subsection{Maximal interval of existence}
Now, by passing to the limit $\delta\to 0$, the solutions
$(\vt_\delta, \rho_\delta, \vu_\delta, w_\delta, \theta_\delta)$ in the sense of Definition \ref{weaksolutionap3} converge to a solution of the original problem in the sense of Definition $\ref{weaksolution}$, however only on a time interval $(0,T)$ which was chosen to preserve the injectivity of the elastic structure in $\eqref{lifespan}$. We can extend the lifespan of the solution iteratively $(n-1)$ times to $(0,T_n)$ for any $n\in \mathbb{N}$. Now, by letting $n\to \infty$, this will either result in $T_n\to \infty$, which means that our solution is global, or $T_n\to T^*$, where $T^*$ is the moment when the elastic structure degenerates and loses injectivity. The proof of this claim is by now standard for FSI problems and we refer to \cite[pp. 397-398]{CDEG05} or \cite[Section 7.4]{compressible}.

\section*{Appendix A: the geometry}
In this Appendix we present some geometrical construction that are used in the proofs of several technical results in the paper. We use the notation introduced in subsection \ref{GeometryDef}
First, let
\begin{eqnarray}
    &&N_a^b:=\{ y+\bb{n}(y)z, y \in \Gamma, z \in (a_{\partial\Omega},b_{\partial\Omega})\} , \label{nab}
\end{eqnarray}
be a tubular neighbourhood of $\partial\Omega$, and let $\pi:N_a^b\to \Gamma$ be the projection onto $\Gamma$ defined as
\begin{eqnarray}\label{p}
    \pi:X\mapsto y, \quad \text{for unique }y\in \Gamma,z\in(a_{\partial\Omega},b_{\partial\Omega})  \text{ such that } X-y = \bb{n}(y)z,
\end{eqnarray}
and $d:N_a^b\to (a_{\partial\Omega},b_{\partial\Omega})$ be the signed distance function
\begin{eqnarray}\label{d}
    d:X\mapsto (X-\pi(X))\cdot \bb{n}(\pi(X)).
\end{eqnarray}

For a given displacement function 
\begin{eqnarray*}
    &&w\in H^1(0,T; H^1(\Gamma)) \cap  L^\infty(0,T; H^2(\Gamma))\cap W^{1,\infty}(0,T; L^2(\Gamma)),\\
    && a_{\partial\Omega}<m\leq w \leq M<b_{\partial\Omega}, \quad \text{on } \Gamma_T,
\end{eqnarray*}
we define the flow function  $\tilde{\Phi}_w^B:[0,T]\times B \to B$ as (here, $B$ is the extended domain given in Section $\ref{penalization}$)
\begin{eqnarray}\label{domainflow}
    \tilde{\Phi}_w^B(t,X):=X+ \begin{cases}
    f_\Gamma(d(X)) w(t,\pi(X))\bb{n}(\pi(X)), \quad \text{ in } [0,T]\times N_a^b,\\
    0, \quad \text{elsewhere in } [0,T]\times B,    \end{cases}
    \end{eqnarray}
where $f_\Gamma\in C_c^\infty(\mathbb{R})$ is defined as follows. Let $a_{\partial\Omega}<m''<m'<m<M<M'<M''<b_{\partial\Omega}$ and define $f_\Gamma:\mathbb{R}\to \mathbb{R}_0^+$ as (see figure $\ref{ffunction}$)
\begin{eqnarray*}
    f_\Gamma(X):= (f \ast g_\alpha)(X) =  \int_\mathbb{R} f(Y) g_\alpha(X-Y) dY
\end{eqnarray*}
where
\begin{eqnarray*}
    f(x):=\begin{cases}
    -\frac{x-m''+m'}{m'} +1,& \quad \text{for } m''\leq x \leq m'-m'' \\
    1,& \quad \text{for } m'-m''\leq x \leq M''-M',\\
    -\frac{x-M''+M'}{M'} +1,& \quad \text{for } M''-M'\leq x \leq M'' \\
    0, &\quad \text{for } x<m'' \text{ and } x>M''
    \end{cases}
\end{eqnarray*}
and $g_\alpha$ is a standard mollifying function with a support $(-\alpha,\alpha)$, for some $\alpha<\frac{1}{2}\min\{ m'-m'', M''-M' \}$. Note that
\begin{eqnarray}
-\frac{1}{M'} \leq f'_{\Gamma} \leq -\frac{1}{m'} \label{functionf}
\end{eqnarray}
\begin{figure}[h!]
\centering\includegraphics[scale=0.205]{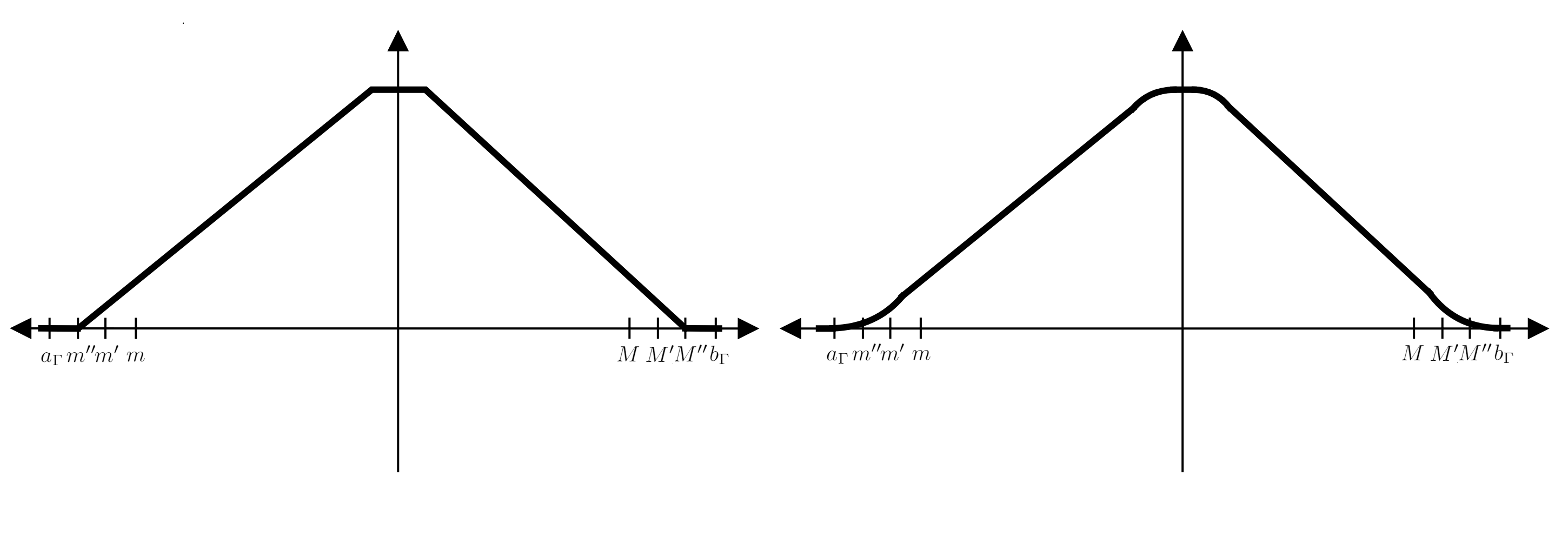}
\caption{Function $f$ (left) and function $f_\Gamma$ (right).}
\label{ffunction}
\end{figure}

\noindent Moreover, $\tilde{\Phi}_w^B|_{\Gamma} = \Phi_w$ and it inherits the regularity from $w$
\begin{eqnarray}\label{phireg}
    \tilde{\Phi}_w^B \in H^1(0,T; H_0^1(B)) \cap  L^\infty(0,T; H_0^2(B))\cap W^{1,\infty}(0,T; L^2(B)).
\end{eqnarray}
and we can calculate
\begin{eqnarray}\label{normal}
    \partial_{n(\pi(X))} \tilde{\Phi}_w^B(t,X)=\big[1+
    f'_{\Gamma}(d(X)) w(t,\pi(X))\big] \bb{n}(\pi(X)), \quad \text{on } [0,T]\times N_a^b
\end{eqnarray}
where due to $\eqref{functionf}$
\begin{align}
   &   1+ f_{\Gamma}'(d(X)) w(t,\pi(X)) \leq 1+ \max\Big\{\frac{\max\{0,M\}}{M'},\frac{\min\{0,m\}}{m'}  \Big\} \leq C,   \label{jacobianphi1}\\
     & 1+ f'_{\Gamma}(d(X)) w(t,\pi(X)) \geq 1- \max\Big\{\frac{\max\{0,M\}}{M'},\frac{\min\{0,m\}}{m'}  \Big\}\geq c>0.
    \label{jacobianphi2}
\end{align}

\section*{Appendix B: the coupled entropy inequality}
We first start with deriving the coupled entropy inequality $\eqref{entineqweak}$ for smooth solutions. For that reason, multiply the identity $\eqref{enteq}$ with a function $\varphi\in C_c^\infty([0,T)\times \overline{\Omega^w(t)})$. First two terms can be transformed as
\begin{eqnarray*}
    \int_{Q_T^w} \big[\partial_t(\rho s) \varphi + \nabla \cdot (\rho s \bb{u}) \varphi\big] =&& - \int_{Q_T^w} \big[ \rho s \partial_t \varphi +  (\rho s \bb{u}) \cdot\nabla \varphi \big] -\int_{\Omega^{w_0}} \rho_0 s(\vartheta_0,\rho_0) \varphi(0,\cdot) \\
    &&\underbrace{- \int_0^T \int_{\Gamma} (\rho s)\circ \Phi_w \partial_t w \bb{n}\cdot \bb{n}^w d\Gamma+ \int_0^T \int_{\Gamma^w} \rho s \bb{u}\cdot \bb{n}^w d\Gamma^w}_{=0, \quad \text{by } \eqref{kinc}},
\end{eqnarray*}
where we have used the Raynolds transport theorem and integration by parts. Next,
\begin{eqnarray*}
    \int_{Q_T^w} \nabla \cdot \left( \frac{\kappa(\vartheta) \nabla \vartheta}{\vartheta} \right) \varphi = - \int_{Q_T^w}\frac{\kappa(\vartheta) \nabla \vartheta \cdot \nabla \varphi}{\vartheta}+\int_0^T\int_{\Gamma^w(t)}  \frac{\kappa(\vartheta) \nabla \vartheta \cdot \bb{n}^w}{\vartheta}  \varphi d\Gamma^w
\end{eqnarray*}
so we obtain
\begin{eqnarray}
    &&\int_{Q_T^w} \rho s( \partial_t \varphi + \bb{u}\cdot \nabla \varphi) -\int_{Q_T^w}\frac{\kappa(\vartheta) \nabla \vartheta \cdot \nabla \varphi}{\vartheta}+ \int_{Q_T^w} \frac{\varphi}{\vartheta}\Big( \mathbb{S}(\vartheta, \nabla \bb{u}):\nabla \bb{u}
    +\frac{\kappa(\vartheta)|\nabla \vartheta|^2}{\vartheta}\Big)\nonumber\\
    &&+\int_0^T\int_{\Gamma^w(t)}  \frac{\kappa(\vartheta) \nabla \vartheta \cdot \bb{n}^w}{\vartheta}  \varphi d\Gamma^w =-\int_{\Omega^{w_0}} \rho_0 s(\vartheta_0,\rho_0) \varphi(0,\cdot) \label{part1}.
\end{eqnarray}
Next, multiply $\eqref{shellheateq}$ with $\tilde{\psi}\in C_c^\infty([0,T)\times \Gamma)$ to obtain
\begin{eqnarray}
    &&\int_{\Gamma_T} \theta \partial_t \tilde{\psi} -\int_{\Gamma_T} \nabla\theta \cdot \nabla \tilde{\psi} +\int_{\Gamma_T} \nabla w \cdot \nabla \partial_t \tilde{\psi} \nonumber\\
    &&=  -\int_{\Gamma} \theta_0 \tilde{\psi}(0,\cdot) - \int_{\Gamma} \nabla w_0 \cdot \nabla\tilde{\psi}(0,\cdot)  + \int_{\Gamma_T} q \tilde{\psi}.\label{part2}
\end{eqnarray}
Now, we sum up $\eqref{part1}$ and $\eqref{part2}$ for non-negative test functions $\varphi \in C_c^\infty([0,T)\times \overline{\Omega^w(t)})$ and $\tilde{\psi}\in C_c^\infty([0,T)\times\Gamma)$ such that $\gamma_{|\Gamma^w}\varphi =\tilde{\psi}$ on $\Gamma_T$, which then by the coupling condition $\eqref{heatf}$ finally give us $\eqref{entineqweak}$.\\

Note that here we have actually derived $\eqref{entineqweak}$ in a form of equality. However, in the definition of our weak solution $\ref{weaksolution}$, both entropy and energy identities are replaced with inequalities, which owes to the lower semicontinuity of norms. We argue that this is enough, i.e. if the weak solution is regular enough, then $\eqref{entineqweak}$ and $\eqref{enineq}$ are satisfied as an identity. More precisely, let smooth functions $(\vartheta,\rho,\bb{u},w,\theta)$ be a weak solution in the sense of definition. Following the ideas from \cite{poul}, we first assume that there is a smooth non-negative test function $\tilde\varphi$ such that w.l.o.g. $\tilde\varphi\leq \vartheta$ and $\eqref{entineqweak}$ holds as a strict inequality
\begin{eqnarray}
    &&\int_{Q_T^w} \rho s( \partial_t \tilde\varphi + \bb{u}\cdot \nabla \tilde\varphi) -\int_{Q_T^w}\frac{\kappa(\vartheta) \nabla \vartheta \cdot \nabla \tilde\varphi}{\vartheta}+ \int_{Q_T^w} \frac{\tilde\varphi}{\vartheta}\Big( \mathbb{S}(\vartheta, \nabla \bb{u}):\nabla \bb{u}
    +\frac{\kappa(\vartheta)|\nabla \vartheta|^2}{\vartheta}\Big) \nonumber\\
    &&+\int_{\Gamma_T} \theta \partial_t \tilde{\psi} -\int_{\Gamma_T} \nabla\theta \cdot \nabla \tilde{\psi} +\int_{\Gamma_T} \nabla w \cdot \nabla \partial_t \tilde{\psi} \nonumber\\
    &&< - \int_{\Omega^{w_0}} \rho_0 s(\vartheta_0,\rho_0) \varphi(0,\cdot) -  \int_{\Gamma} \theta_0 \tilde{\psi}(0,\cdot) - \int_{\Gamma} \nabla w_0 \cdot \nabla\tilde{\psi}(0,\cdot). \label{fakeineq}
\end{eqnarray}
Now, by choosing $(\boldsymbol\varphi,\psi)=(\bb{u},\partial_t w)$ in $\eqref{momeqweak}$, we obtain
\begin{eqnarray*}
    &&\frac{1}{2}\int_{\Omega^w(t)} \rho |\bb{u}|^2 \Big|_0^T +\frac{1}{2}\int_{\Gamma} |\partial_t w|^2\Big|_0^T + \frac{1}{2}\int_\Gamma |\Delta w|^2\Big|_0^T  - \int_{\Gamma_T} \Delta \theta \partial_t w + \alpha_1  \int_{\Gamma_T} |\partial_t \nabla w|^2 + \frac{\alpha_2}{2} \int_\Gamma |\partial_t \nabla w|^2 \Big|_0^T \\
    &&= \int_0^T \int_{\Omega^w(t)} p(\rho,\vartheta) (\nabla \cdot \bb{u})- \mathbb{S}(\vartheta,\nabla \bb{u}):\nabla \bb{u}.
\end{eqnarray*}
while the energy inequality gives us
\begin{eqnarray*}
    &&\frac{1}{2}\int_{\Omega^w(t)} \rho |\bb{u}|^2 \Big|_0^T +\frac{1}{2}\int_{\Gamma} |\partial_t w|^2\Big|_0^T + \frac{1}{2}\int_\Gamma |\Delta w|^2\Big|_0^T + \alpha_1  \int_{\Gamma_T} |\partial_t \nabla w|^2 + \frac{\alpha_2}{2} \int_\Gamma |\partial_t \nabla w|^2 \Big|_0^T \\
    &&\leq - \int_{\Omega^w(t)}\rho e(\rho,\vartheta) \Big|_0^T - \frac{1}{2}\int_\Gamma |\theta|^2 \Big|_0^T - \int_{\Gamma_T} |\nabla \theta|^2.
\end{eqnarray*}
which together imply
\begin{eqnarray*}
    \int_0^T \int_{\Omega^w(t)} p(\rho,\vartheta) (\nabla \cdot \bb{u})- \mathbb{S}(\vartheta,\nabla \bb{u}):\nabla \bb{u} \leq - \int_{\Omega^w(t)}\rho e(\rho,\vartheta) \Big|_0^T - \frac{1}{2}\int_\Gamma |\theta|^2 \Big|_0^T - \int_{\Gamma_T} |\nabla \theta|^2- \int_{\Gamma_T} \Delta \theta \partial_t w.
\end{eqnarray*}
Now, we sum up $\eqref{entineqweak}$ with $\varphi=\vartheta-\tilde\varphi$ with $\eqref{fakeineq}$ and obtain that $\eqref{entineqweak}$ holds as a strict inequality for $\varphi=\vartheta$. Then, one has by the Gibbs identity $\eqref{gibs}$
\begin{eqnarray*}
     \int_0^T \int_{\Omega^w(t)} p(\rho,\vartheta) (\nabla \cdot \bb{u})- \mathbb{S}(\vartheta,\nabla \bb{u}):\nabla \bb{u} > - \int_{\Omega^w(t)}\rho e(\rho,\vartheta) \Big|_0^T - \frac{1}{2}\int_\Gamma |\theta|^2 \Big|_0^T - \int_{\Gamma_T} |\nabla \theta|^2- \int_{\Gamma_T} \Delta \theta \partial_t w,
\end{eqnarray*}
which is obviously a contradiction. Therefore, $\eqref{entineqweak}$ must hold as an identity. Using the ideas from the above calculation, we easily obtain that now the energy inequality $\eqref{enineq}$ has to hold as an identity as well.

\vspace{.1in}
\noindent{\bf Acknowledgments:} B.M. was partially supported by the Croatian Science Foundation (Hrvatska Zaklada za Znanost) grant number IP-2018-01-3706. V. M., \v S. N., and A. R. have been supported by the Czech Science Foundation (GA\v CR) project GA19-04243S. The Institute of Mathematics, CAS is supported by RVO:67985840. A.R has also been supported by the Basque Government through the BERC 2018-2021 program and by the Spanish State Research Agency through BCAM Severo Ochoa excellence accreditation SEV-2017-0718 and through project PID2020-114189RB-I00 funded by Agencia Estatal de Investigación (PID2020-114189RB-I00 / AEI / 10.13039/501100011033).
\bibliography{NSF+thermo}
\bibliographystyle{plain}

\Addresses

\end{document}